\documentclass[11pt,longtable,letterpaper,oneside,reqno]{amsart}
\usepackage{amsmath,amsfonts,amsthm,amssymb,amscd,mathtools,stmaryrd,url}
\usepackage{bbm}
\usepackage{epsfig}
\usepackage{graphicx}
\usepackage{latexsym}
\usepackage{longtable}
\usepackage{float}
\usepackage{hyperref}
\usepackage{color}

\DeclareFontFamily{OT1}{rsfs}{}
\DeclareFontShape{OT1}{rsfs}{n}{it}{<-> rsfs10}{}
\DeclareMathAlphabet{\mathscr}{OT1}{rsfs}{n}{it}

\newcommand{\mymod}[1]{(\operatorname{mod} #1)}

%\renewcommand{\i}{{\mathrm{i}}} %\sqrt{-1}
 %Indicator function
 %Fourier transform

\renewcommand{\Re}{{\operatorname{Re}}}
\renewcommand{\Im}{{\operatorname{Im}}}

\newtheorem{prop}{Proposition}[section]
\newtheorem{thm}[prop]{Theorem}

\newtheorem{lem}[prop]{Lemma}

\newtheorem {defn }{Definition}

\newenvironment{rem}{{\bf Remark.}}{}
\numberwithin{equation}{section}
\usepackage[margin=1in]{geometry}
\usepackage{lipsum}

\setcounter{tocdepth}{3}
\setcounter{secnumdepth}{3}

%\allowdisplaybreaks

\begin{document}
\title{The fourth moment of quadratic Dirichlet $L$-functions}

\begin{abstract}
We study the fourth moment of quadratic Dirichlet $L$-functions at $s= \frac{1}{2}$. We show an asymptotic formula under the generalized Riemann hypothesis, and obtain a precise lower bound unconditionally. The proofs of these results  follow closely arguments of 
Soundararajan and Young
\cite{Sound-Young}  and Soundararajan \cite{Sound00}.
\end{abstract}
\author[Q. Shen]{Quanli Shen}
\address{Department of Mathematics and Computer Science, University of Lethbridge, 4401 University Drive, Lethbridge, Alberta, T1K 3M4 Canada}
\email{quanli.shen@uleth.ca}
\thanks{
Research for this article is partially supported by the  University of Lethbridge.
}
\subjclass[2010]{11M06, 11M50}
\keywords{\noindent Moments of $L$-functions, quadratic Dirichlet $L$-functions}
\date{\today}
\maketitle

%\today

%\tableofcontents
%
%----Contents----
%\tableofcontents
\section{Introduction}
Let $\chi_d = \left(\frac{d}{\cdot} \right)$ be a real primitive Dirichlet  character modulo $d$ given by the Kronecker symbol, where $d$ is a fundamental discriminant. The $k$-th moment of quadratic Dirichlet $L$-functions is   
\begin{equation}
 \sideset{}{^\flat}\sum_{0<d\leq X}L(\tfrac{1}{2},\chi_d)^k,
 \label{equ:k-th}
\end{equation}
where $\sideset{}{^\flat}\sum$  denotes the sum over fundamental discriminants, and $k$ is  a positive real number. One great motivation to study  \eqref{equ:k-th} comes from Chowla's conjecture, which states that $L(\tfrac{1}{2},\chi_d) \neq 0$ for all fundamental discriminants $d$.
The current best result toward this conjecture is Soundararajan's celebrated work \cite{Sound00}  in 2000, where it was proven that $L(\frac{1}{2},\chi_{8d}) \neq 0$ for at least $87.5 \%$ of the odd square-free integers $d\geq 0$. The key to the proof  is the evaluation of mollified first and second moments of quadratic Dirichlet $L$-functions.

 In 2000, using a random matrix model, Keating and Snaith \cite{Keating-Snaith02} conjectured  that for any  positive real number  $k$,
\begin{align}
  \sideset{}{^\flat}\sum_{|d|\leq X}L(\tfrac{1}{2},\chi_d)^k \sim C_kX(\log X)^{\frac{k(k+1)}{2}},
  \label{conj0}
\end{align}
where $C_k$ are explicit constants. 
Various researchers have studied versions of these moments summed over certain subsets of the fundamental discriminants. 
For instance, in (\ref{equ:k-th})  we consider positive fundamental discriminants.   However, there are no difficulties in also 
studying negative fundamental discriminants.
Some articles even consider characters of the  form $\chi_{8d}$, where $d$ are odd positive square-free integers. 
The main reason researchers study these special cases, rather than consider all fundamental discriminants, is to focus 
on the methods and techniques.  It is possible to establish results for all fundamental discriminants, but this 
would involve more  cases that need to be studied. 
%However, in practice we can consider the sum over   $0<d \leq X $ as in (\ref{equ:k-th}). The other half, $-X <d<0$, can be computed similarly. Further, to focus more on the methods,  we can  consider characters of the  form $\chi_{8d}$, where $d$ are odd positive square-free integers.  There are no essential difference between these forms.  
The conjecture analogous to  (\ref{conj0}) for characters of the form $\chi_{8d}$, which can be established by using Keating and Snaith's method  \cite{Keating-Snaith02}, was obtained in  Andrade and Keating's paper \cite[Conjecture 2]{Andrade-Keating01}. For any  positive  real number $k $, it was conjectured that 
\begin{align}
 \sideset{}{^*}\sum_{\substack{0 < d \leq X \\ (d,2)=1} }L\left(\tfrac{1}{2},\chi_{8d} \right)^k \sim \frac{4a_{k}}{\pi^2}\frac{G(k+1)\sqrt{\Gamma(k+1)}}{\sqrt{G(2k+1)\Gamma(2k+1)}} X (\log X)^{\frac{k(k+1)}{2}},
\label{conj}
\end{align}
where 
$ \sideset{}{^*}\sum$ denotes the sum over square-free integers,
 $G(z)$ is the Barnes $G$-function, and 
 \begin{equation}
a_{k} := 2^{-\frac{k(k+2)}{2}} 
\prod_{(p,2)=1} \frac{(1-\frac{1}{p})^{\frac{k(k+1)}{2}}}{1+\frac{1}{p}} 
\left( 
\frac{
 (1+\frac{1}{\sqrt{p}} )^{-k} +
 (1-\frac{1}{\sqrt{p}} )^{-k}
 }{2}
 +\frac{1}{p}
\right).
\label{equ:a-k}
\end{equation}

In this paper, we  prove  the conjecture in (\ref{conj}) for $k=4$ assuming  the generalized Riemann hypothesis (GRH).
\begin{thm}
Assume GRH for $L(s,\chi_d)$ for all fundamental discriminants $d$.
For any $\varepsilon >0$, we have 
\[
 \sideset{}{^*}\sum_{\substack{0<d\leq X\\(d,2)=1}}L(\tfrac{1}{2},\chi_{8d})^4= \frac{a_{4 }}{2^6 \cdot 3^3 \cdot 5^2 \cdot 7 \cdot \pi^2}X{(\log X)}^{10} + O \left(X (\log X)^{9.75+\varepsilon} \right).
\]
\label{main-thm}
\end{thm}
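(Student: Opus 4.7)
The plan is to follow the approximate functional equation plus Poisson summation strategy of Soundararajan and Young \cite{Sound-Young}, adapted to the fourth power. Since $L(s,\chi_{8d})^4 = \sum_n d_4(n)\chi_{8d}(n) n^{-s}$ and the completed $L$-function satisfies a symmetric functional equation, one obtains a smooth approximate functional equation of the shape
\[
L(\tfrac12,\chi_{8d})^4 = 2\sum_{n\geq 1} \frac{d_4(n)\chi_{8d}(n)}{\sqrt{n}}\, W_4\!\left(\frac{n}{(8d)^2}\right),
\]
where $W_4$ is an explicit smooth cutoff coming from the gamma factors. Summing this against odd square-free $0<d\leq X$ with a smooth dyadic partition, I would then swap the order of summation to expose the inner character sum in $d$.

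Next I would write each $n$ uniquely as $n=m^2\ell$ with $\ell$ square-free, splitting the $n$-sum into a diagonal part ($\ell=1$) and an off-diagonal part ($\ell>1$). On the diagonal, $\chi_{8d}(m^2)$ is simply the indicator of $(m,2d)=1$, and the remaining $d$-sum is a smoothly weighted count of odd square-free integers with an Euler-product weight. Standard manipulations of the resulting Dirichlet series with $d_4(m^2)$ coefficients, whose singular part at $s=1$ is a pole of order $10$, produce a polynomial of degree $10$ in $\log X$; a careful Euler-factor computation should identify the leading coefficient with $a_4/(2^6\cdot 3^3\cdot 5^2\cdot 7\cdot \pi^2)$ via \eqref{equ:a-k}. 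For the off-diagonal $\ell>1$, I would apply Poisson summation to the $d$-sum modulo $8\ell$, converting the character sum into a dual sum over integers $k$ weighted by Gauss-type sums $G(k,\ell)$ and by Fourier transforms of the smooth cutoff.

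Because $\ell>1$ is square-free, it is never a perfect square, so the $k=0$ frequency drops out by orthogonality. The remaining $k\neq 0$ terms I would split according to whether $\pm k$ is a perfect square: the square-$k$ contribution, after Mellin inversion, furnishes a secondary main term that conspires with the diagonal to complete the full degree-$10$ polynomial in $\log X$, while the non-square-$k$ contribution forms the genuine error. The principal obstacle, and where GRH is essential, is bounding the non-square-$k$ off-diagonal: after Poisson these resemble shifted second-moment expressions for $L(\tfrac12,\chi_{8k})$ summed against $d_4$-type divisor weights, and the key input is Soundararajan's GRH bound from \cite{Sound00} giving $L(\tfrac12,\chi_{8k})\ll_\varepsilon k^\varepsilon$ together with sharp upper bounds on short moments of $\log L$. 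Controlling the arithmetic Euler factors uniformly in $k$ and extracting the $(\log X)^{9.75+\varepsilon}$ saving rather than a full logarithm is the most delicate book-keeping, requiring careful treatment of the Mellin inversion contours, of the Gauss-sum factorizations, and of the secondary main terms that appear along the way.
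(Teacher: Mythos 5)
Your overall plan — approximate functional equation, decompose into diagonal, square-$k$ off-diagonal (both feeding the main term), and non-square-$k$ error, with GRH controlling the error — matches the paper's architecture, but you take a genuinely different route on two counts. You use the approximate functional equation for $L(\tfrac12,\chi_{8d})^4$ directly with $d_4$-coefficients, whereas the paper expands $L(\tfrac12,\chi_{8d})^2$ via (2.4) and then squares, producing a double sum over $n_1, n_2$ with $\tau$-coefficients and two truncation parameters $U_1, U_2$. The paper explicitly notes it chooses the second-power version because that is what makes the unconditional lower bound (Theorem 1.2) accessible via Cauchy--Schwarz; for Theorem 1.1 alone your fourth-power AFE is a legitimate alternative, and your device of writing $n=m^2\ell$ to isolate the diagonal as $\ell=1$ is a clean substitute for the paper's ``$k=0$ iff $n_1 n_2 = \square$'' characterization.

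That said, there are concrete gaps. First, you sum over \emph{square-free} odd $d$, and then say you would ``apply Poisson summation to the $d$-sum modulo $8\ell$''; but Poisson (Lemma 2.1) requires summing over \emph{all} odd $d$. The paper handles this by M\"obius inversion $\sum_{a^2\mid d}\mu(a)$, truncating $a$ at a parameter $Y$ and bounding the tail $S_2$ separately (Lemma 3.2). Without that step the Poisson application doesn't go through, the modulus is no longer $8\ell$ but picks up $a^2$, and the $Y$-tail needs GRH and Heath-Brown's fourth-moment bound to be controlled — none of which appears in your outline. Second, and more seriously, you identify the key GRH input as the pointwise Lindel\"of bound $L(\tfrac12,\chi_{8k})\ll_\varepsilon k^\varepsilon$. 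That is far too weak to win the logarithms you need: inserted into the non-square-$k$ sum it loses $X^\varepsilon$, which swamps the power-of-log saving. The actual GRH ingredient is the averaged \emph{shifted moment} bound (Theorem 3.1), which gives $\sum_{|d|\le X}|L(\tfrac12+z_1,\chi_d)|^2|L(\tfrac12+z_2,\chi_d)|^2 \ll X(\log X)^{4+\varepsilon}\bigl(1+\min\{(\log X)^6, |\Im z_1-\Im z_2|^{-6}\}\bigr)$, proved by a large-deviation argument for $\log|L|$ in the style of Soundararajan's moment-upper-bound method. Finally, the exponent $9.75$ in the error is not generic book-keeping: it is $\tfrac{10+9.5}{2}$, arising from Cauchy--Schwarz between the truncated sum $A_U$ (of size $(\log X)^{10}$) and the tail $B_U = L^2 - A_U$ (of size $(\log X)^{9.5+\varepsilon}$, which in turn uses the $\min$-structure in Theorem 3.1 together with $\log(X/U)\asymp\log\log X$ for $U=X/(\log X)^{2^{50}}$). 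Your fourth-power AFE route has no $A_U/B_U$ split, so if it works at all you would land on a different error exponent; you would need to supply an explicit replacement for this truncation analysis to recover $9.75$.
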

The proof of Theorem \ref{main-thm} largely follows Soundararajan and Young's paper \cite{Sound-Young} in 2010 and Soundararajan's paper \cite{Sound00}  in 2000. In \cite{Sound-Young},  Soundararajan and Young
proved an asymptotic formula for the second moment of quadratic twists of a modular $L$-function, obtaining the leading main term.
Experts believed  that the methods and techniques in  \cite{Sound-Young} could be used to evaluate the fourth moment of quadratic Dirichlet $L$-functions.  Motivated by this expectation, we established Theorem \ref{main-thm}.  In fact, Theorem \ref{main-thm} may be viewed as a version of \cite[Theorem 1.2]{Sound-Young} where $f$ is an Eisenstein series. The main difference between 
this article and  \cite{Sound-Young} is that 
% It seems easier  to bound the error terms, but harder to evaluate the main terms in the former problem. The later problem is opposite. 
the off-diagonal terms (see just after \eqref{equ:13.a} for a precise definition) contribute to the main term, whereas in 
 \cite{Sound-Young} they are part of the error term.   
 We use  techniques from \cite[Sections 5.2, 5.3]{Sound00} to evaluate the off-diagonal terms and this is main new input.
 These terms may be written as a certain multiple complex integral.
 One of the difficulties in evaluating this integral is that the integrand has high order poles and this makes the calculation more  intricate.
It should be noted that  in 2017 Florea \cite{Florea02}  has proven an asymptotic formula for the fourth moment of quadratic Dirichlet $L$-functions in the function field setting,  with extra lower main terms. 

Similar to \cite[Theorem 1.1]{Sound-Young}, we  obtain an unconditional lower bound that matches the conjectured asymptotic formula (\ref{conj}).  This result was  stated  without proof by Rudnick and Soundararajan \cite{Rudnick-Soundararajan} in 2006.
\begin{thm}
Unconditionally,
we have
\[
 \sideset{}{^*}\sum_{\substack{0<d\leq X\\(d,2)=1}}L(\tfrac{1}{2},\chi_{8d})^4 \geq \left(\frac{a_{4}}{2^6 \cdot 3^3 \cdot 5^2 \cdot 7 \cdot \pi^2} +o(1) \right)X(\log X)^{10}.
\]
\label{main-thm1}
\end{thm}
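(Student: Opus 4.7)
The plan is to adapt the Rudnick--Soundararajan lower bound method \cite{Rudnick-Soundararajan}, which deduces a matching unconditional lower bound from the twisted first moment estimate that drives Theorem~\ref{main-thm}. Concretely, I would introduce a short Dirichlet polynomial
\[
A(d) \;:=\; \sum_{n \leq Y} \frac{\lambda(n) \chi_{8d}(n)}{\sqrt{n}},
\]
with $Y$ a small power of $X$ (say $Y = X^{1/4 - \varepsilon}$) and multiplicative coefficients $\lambda(n)$ designed so that $A(d)$ mimics $L(\tfrac{1}{2}, \chi_{8d})$. Writing $L = L(\tfrac{1}{2}, \chi_{8d})$ for brevity, the elementary identity
\[
L^4 - 4 L A^3 + 3 A^4 \;=\; (L - A)^2 \bigl((L + A)^2 + 2 A^2\bigr) \;\geq\; 0
\]
yields the pointwise bound $L^4 \geq 4 L A^3 - 3 A^4$. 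Summing over odd squarefree $0 < d \leq X$ reduces the theorem to the asymptotic evaluation of
\[
\mathcal S_1 \;:=\; \sideset{}{^*}\sum_{\substack{0 < d \leq X \\ (d,2) = 1}} L(\tfrac{1}{2}, \chi_{8d}) A(d)^3
\quad \text{and} \quad
\mathcal S_2 \;:=\; \sideset{}{^*}\sum_{\substack{0 < d \leq X \\ (d,2) = 1}} A(d)^4.
\]

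Expanding $A(d)^3$ writes $\mathcal S_1$ as a weighted linear combination of the twisted first moments
\[
\sideset{}{^*}\sum_{\substack{0 < d \leq X \\ (d,2) = 1}} L(\tfrac{1}{2}, \chi_{8d}) \chi_{8d}(m)
\]
with $m = n_1 n_2 n_3 \leq Y^3 \leq X^{3/4 - 3\varepsilon}$. Each such twisted first moment can be handled unconditionally via the approximate functional equation and Poisson summation, using exactly the diagonal/off-diagonal analysis that is the new technical input of Theorem~\ref{main-thm} (compare Sections~5.2 and~5.3 of \cite{Sound00}). Summing the resulting asymptotics over $n_1, n_2, n_3$ produces an asymptotic of the form $\mathcal S_1 \sim \mathcal F_1(\lambda) \, X (\log X)^{10}$. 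Analogously, expanding $A(d)^4$ reduces $\mathcal S_2$ to a quadruple sum whose main contribution comes from the diagonal $n_1 n_2 n_3 n_4 = \square$ (with $n_1 n_2 n_3 n_4 \leq Y^4 \leq X^{1 - 4\varepsilon}$), and the standard asymptotic $\sideset{}{^*}\sum_d \chi_{8d}(N) \sim X \, c(N)$ for squares $N$ in this range yields $\mathcal S_2 \sim \mathcal F_2(\lambda) \, X (\log X)^{10}$.

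Finally, the weight $\lambda(n)$ is chosen so that the combination $4 \mathcal F_1(\lambda) - 3 \mathcal F_2(\lambda)$ hits the target constant $a_4 / (2^6 \cdot 3^3 \cdot 5^2 \cdot 7 \cdot \pi^2)$. A natural candidate is $\lambda(n) = P(\log(Y/n)/\log Y)$ for an optimized polynomial $P$ of degree three in the spirit of \cite{Rudnick-Soundararajan}; for such a choice both $\mathcal F_1(\lambda)$ and $\mathcal F_2(\lambda)$ equal the conjectured constant, so the combination $4 \mathcal F_1 - 3 \mathcal F_2$ collapses cleanly. The main obstacle will be the evaluation of $\mathcal S_1$: the twisted first moment with $m$ as large as $X^{3/4 - 3\varepsilon}$ inherits exactly the multiple complex integrals with high-order poles that are the central difficulty of Theorem~\ref{main-thm}, and extracting the constant on the nose requires careful residue analysis uniformly in $m$. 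The sum $\mathcal S_2$ is a more classical mean-fourth-power computation for a short Dirichlet polynomial, and the remaining combinatorial bookkeeping is routine.
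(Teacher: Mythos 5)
Your approach is a genuinely different route from the paper. The paper's proof of Theorem \ref{main-thm1} works with the quantity $A_U(\tfrac{1}{2};8d) = 2\sum_n \frac{\tau(n)\chi_{8d}(n)}{\sqrt{n}}\omega(n\pi/U)$, which is a \emph{smooth truncation of $L(\tfrac{1}{2},\chi_{8d})^2$} (via the approximate functional equation \eqref{equ:2.1.1}) with $U = X^{1-4\varepsilon}$ close to the natural length $8d \asymp X$, and then applies Cauchy--Schwarz as in \eqref{cauchy-lower}: $\sum L^4 \geq \left(\sum A_U L^2\right)^2 / \sum A_U^2$. Both the numerator and denominator are handled by the full diagonal/off-diagonal machinery of Sections \ref{sec:k=0}--\ref{sec:error}, and, crucially, the off-diagonal terms $S_1(k=\square)$ contribute to the main term. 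In contrast, you propose a short Dirichlet polynomial $A$ of length $Y = X^{1/4-\varepsilon}$ approximating $L$ itself, combined with the Rudnick--Soundararajan resolvent $L^4 \geq 4LA^3 - 3A^4$.

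There is a genuine gap: the claim that ``for such a choice both $\mathcal{F}_1(\lambda)$ and $\mathcal{F}_2(\lambda)$ equal the conjectured constant'' is unjustified, and I believe it fails. Your resolvent identity $L^4 - 4LA^3 + 3A^4 = (L-A)^2\bigl((L+A)^2 + 2A^2\bigr)$ vanishes precisely when $A = L$, so the bound is tight only when $A$ captures $L$ on the nose. The natural length of $L(\tfrac{1}{2},\chi_{8d})$ is $\sqrt{8d} \asymp X^{1/2}$, whereas your $A$ has length only $X^{1/4-\varepsilon}$, forced on you by the requirement that $\sum A^4$ be diagonal-dominated ($Y^4 < X$). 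A standard volume/CFKRS-type heuristic shows that with $\lambda \equiv 1$ the constant $\mathcal{F}_2$ for $\sum A^4$ scales like $\bigl(\log Y / \log\sqrt{X}\bigr)^{10}\,C_4 = (1/2)^{10}C_4 \approx C_4/1024$, and an optimized $\lambda$ supported on $[1,Y]$ cannot recover the lost factor; $\mathcal{F}_1$ suffers a similar deficit. Consequently $4\mathcal{F}_1 - 3\mathcal{F}_2$ (equivalently the optimized $\mathcal{F}_1^4/\mathcal{F}_2^3$) falls short of the target constant by a substantial power of $2$. To repair this within your framework you would need to push $Y$ well beyond $X^{1/4}$ and keep the off-diagonal terms in $\sum A^4$ as genuine main terms, which is not ``routine bookkeeping'' but essentially the content of Section \ref{sec:k=square}; or, as the paper does, switch to a near-complete approximation of $L^2$ with Cauchy--Schwarz and evaluate the $S_1(k=\square)$ contribution explicitly.
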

We now introduce more refined conjectures  for the moments of quadratic Dirichlet $L$-functions   and 
provide a brief history of  related results.  
%The study of  moments  of  $L$-functions  has a long history and   dates back at least to  Hardy and Littlewood. They studied the moments of the Riemann zeta function in order to study the Lindel{\"{o}}f hypothesis.  Regarding  the moments of the  Riemann zeta function, we refer the reader to \cite{Ng01} for a nice survey. 
In 2005, Conrey, Farmer, Keating, Rubinstein and Snaith \cite{Conrey-Farmer-Keating-Rubinstein-Snaith}   gave a more precise  conjecture, including all other principal lower order terms, 
%
%
%
%Besides the conjectures in (\ref{conj0}) and (\ref{conj}),  using a ``recipe'' approach, in 2005,
%Conrey, Farmer, Keating, Rubinstein and Snaith \cite{Conrey-Farmer-Keating-Rubinstein-Snaith}   established   the following conjecture, which includes all other principal lower order terms.
\begin{equation}
\label{equ:conj-cfkrs0}
\sideset{}{^\flat}\sum_{0<d\leq X}L(\tfrac{1}{2},\chi_d)^k  = XP_{\frac{k(k+1)}{2}}(\log X) + E_k(X),
\end{equation}
where  $k$ is a positive integer, $P_n(x)$ is an explicit polynomial of degree $n$, and $E_k(X)= o_k (X)$.
For characters of the form $\chi_{8d}$, their conjecture may be written as  
\begin{equation}
\sideset{}{^*}\sum_{\substack{0<d\leq X \\ (d,2) =1 }}L(\tfrac{1}{2},\chi_{8d})^k  = XQ_{\frac{k(k+1)}{2}}(\log X) + \hat{E}_k(X),
\label{equ:conj-cfkrs}
\end{equation}
where  $ Q_n(x)$ is another  explicit  polynomial of degree $n$, and $\hat{E}_k(X) = o_k (X)$.

In 1981, Jutila \cite{Jutila02} established  \eqref{equ:conj-cfkrs0} for $k=1$ with  $E_1(X)= O(X^{\frac{3}{4}+\varepsilon})$.
In 1985,  Goldfeld and Hoffstein \cite{Goldfeld-Hoffstein}  improved this  to $E_1(X)= O(X^{\frac{19}{32}+\varepsilon})$
by using multiple Dirichlet series. 
Their work implies the error $O(X^{\frac{1}{2}+\varepsilon})$ for a smoothed version of the sum  in \eqref{equ:conj-cfkrs0}
when $k=1$.  This  was later   obtained by Young \cite{Young01} in 2009, using a different technique 
based  on a recursive method and a study of shifted moments. We remark that  Alderson and Rubinstein \cite{Alderson-Rubinstein}  conjectured  that $E_1(X) = O(X^{\frac{1}{4}+\varepsilon})$.  In 1981, the second moment was established  by Jutila  \cite{Jutila02},
\begin{align*}
\sideset{}{^\flat}\sum_{|d|\leq X}L(\tfrac{1}{2},\chi_d)^2 = C_2X(\log X)^3 + O \left(X(\log X)^{\frac{5}{2}+\varepsilon} \right).
\end{align*}
In 2000, Soundararajan \cite{Sound00} improved this by obtaining the full main term in \eqref{equ:conj-cfkrs}, in the case $k=2$,
with the  power savings $\hat{E}_2(X) = O (X^{\frac{5}{6}+\varepsilon})$. In 2020, Sono \cite{Sono} improved this to $ O (X^{\frac{1}{2}+\varepsilon})$ for a smoothed variant of $\hat{E}_2(X)$. In  \cite{Sound00}
Soundararajan  was the first to prove an asymptotic for the
 third moment, obtaining $\hat{E}_3(X) =O(X^{\frac{11}{12}+\varepsilon})$.
 In 2003,  Diaconu, Goldfeld and Hoffstein \cite{Diaconu-Goldfeld-Hoffstein}   improved this to 
   $E_3(X) = O(X^{0.85 \dots +\varepsilon})$   by using  multiple Dirichlet series techniques.
In 2013, Young \cite{Young} further improved this to $  O(X^{\frac{3}{4}+\varepsilon})$ for  a  smoothed version of $\hat{E}_3(X)$
by using similar techniques to \cite{Young01}. Recently, in 2018, Diaconu and Whitehead \cite{Diaconu-Whitehead} 
improved Young's result by showing that a smoothed version of $\hat{E}_3(X)$  is of size $cX^{\frac{3}{4}}+O(X^{\frac{2}{3} +\varepsilon})$, for some $c \in \mathbb{R}$.   This verified a conjecture of Diaconu, Goldfeld and Hoffstein  \cite{Diaconu-Goldfeld-Hoffstein}
of the presence of a secondary lower order term.  Zhang   \cite{Zhangqiao} had previously  conditionally established a secondary term of size $X^{\frac{3}{4}}$ in 2005.

For the family of quadratic Dirichlet $L$-functions, moments higher than four  have not been asymptotically evaluated. This seems  beyond  current  techniques. However, there are celebrated results on upper and lower bounds of the moments.
In 2006, Rudnick and Soundararajan \cite{Rudnick-Soundararajan} proved  the lower bound 
\[
\sideset{}{^\flat}\sum_{0<d\leq X} L(\tfrac{1}{2},\chi_d)^k \gg_k X(\log X)^{\frac{k(k+1)}{2}}
\]
for all even natural number $k \geq 1$. 
 In 2009,  Soundararajan \cite{Sound01}  proved under GRH that  for all positive real $k$,
 \begin{equation}
\sideset{}{^\flat}\sum_{0<d\leq X}L(\tfrac{1}{2},\chi_d)^k\ll_{k,\varepsilon} X(\log X)^{\frac{k(k+1)}{2}+\varepsilon}.
\label{Sound-upper}
\end{equation}
In 2013,  Harper \cite{Harper}, assuming GRH, improved this to 
 \[
\sideset{}{^\flat}\sum_{0<d\leq X}L(\tfrac{1}{2},\chi_d)^k\ll_{k} X(\log X)^{\frac{k(k+1)}{2}}.
\] 

The method of this paper is largely based on the arguments and techniques in \cite{Sound-Young}  and \cite{Sound00}.  We use the approximate functional equation for  Dirichlet $L$-functions, and then employ the Poisson summation formula to separate the summation into  diagonal terms, off-diagonal terms, and  error terms.  Both  diagonal and off-diagonal terms  contribute to the main term.  
To bound the error terms, by following the argument in \cite{Sound01,Sound-Young}, under GRH, we established  an upper bound for the shifted moments of quadratic Dirichlet $L$-functions (see Theorem \ref{thm:3.1}).   

With further effort, one might be able to heuristically  obtain all the main terms that are expected from 
the conjecture of Conrey et al. in  (\ref{equ:conj-cfkrs}).  However, the computation will be complicated. 
It might be simplified by considering a shifted version of the fourth moment, analogous to the calculation in \cite{Young01}.
Florea considered the function field version of the fourth moment in  \cite{Florea02}. In her work she was able to
identify all the main terms as given by a conjecture of Andrade-Keating 
\cite[Conjecture 5]{Andrade-Keating01} (the function field analogue of  (\ref{equ:conj-cfkrs})). 
By using a  recursive method,  Florea  obtained extra lower main terms in this case.
% However, she was 
%unable to establish this conjecture as she obtain  power savings error term.  However, using a recursive method,  Florea  obtained extra lower main terms in this case.
 It is possible that  her techniques may be employed to obtain additional lower main terms in Theorem \ref{main-thm}
and we hope to revisit this in future work. 
However,  one would need to apply the approximate functional equation for  the fourth power of the $L$-function rather than the second power (\ref{equ:2.1.1}).  In addition,  one would have to 
eliminate the use of  the parameters $U_1,U_2$ in   (\ref{equ:13.1+}).  
In our article, we use the  approximate functional equation for  the second power of the $L$-function 
as it is  necessary to obtain the unconditional lower bound in Theorem \ref{main-thm1}.

The outline of this paper is as follows. The proof of Theorem \ref{main-thm} and \ref{main-thm1}  proceed simultaneously. In Section \ref{sec:tool},  we introduce some tools. In Section \ref{sec:setup}, we set up the evaluation of the  fourth moment.
We apply the Poisson summation formula to split the fourth moment into diagonal, off-diagonal, and error terms. 
 We evaluate the  diagonal terms and off-diagonal terms in Section \ref{sec:k=0} and Section \ref{sec:k=square}, respectively. The error terms are bounded in Section \ref{sec:error}. The proofs of Theorem \ref{main-thm} and \ref{main-thm1}  are completed in Section \ref{sec:proof-main}. Finally, we give the  proof of Theorem \ref{thm:3.1}  in Section \ref{sec:upper bd}. 
 \\

\noindent{\textbf{Notation.}}
In this paper, we shall use  the convention that $\varepsilon>0$ denotes an arbitrary small constant which may vary in different situations. For two functions $f(x)$ and $g(x)$, we shall use the notation $f(x) = O(g(x))$, $f(x) \ll g(x)$ to mean there exists  a  constant $C$ such  that $|f(x)| \leq  C|g(x)|$  for all sufficiently large $x$.   If we write $f(x) = O_a(g(x))$ or $f(x) \ll_a g(x)$, then we mean that the corresponding constants depend on $a$. Throughout the  paper, the big $O$ may depend on $\varepsilon$.

\section{Basic tools} \label{sec:tool}
In this section, we introduce several tools that shall be used in this article.
\subsection{Approximate functional equation}
For $\xi > 0$, define
\begin{align}
\omega(\xi)&:=\frac{1}{2\pi i}\int_{(c)} \pi^s g(s)  \xi^{-s}\frac{ds}{s}, \ c>0,
\label{equ:def-w}
\end{align}
where 
\begin{align}
g(s) &:=  \pi^{-s}\left(\frac{\Gamma(\frac{s}{2}+\frac{1}{4})}{\Gamma(\frac{1}{4})}\right)^2.
\label{equ:def-g}
\end{align}
Here, and henceforth, $\int_{(c)}$ stands for $\int_{c-i \infty}^{c+ i \infty}$. It can be shown (see \cite[Lemma 2.1]{Sound00}) that $w(\xi)$ is real-valued and smooth on $(0,+\infty)$, bounded as 
$\xi$ near 0, and decays exponentially as $\xi \rightarrow + \infty$. Define
\[
A (d):=\sum_{n=1}^{\infty}\frac{\tau(n)\chi_{8d}(n)}{\sqrt{n}}\omega \left(\frac{n\pi}{8d}\right),
\]
where $\tau(n)$ is the number of  divisors of $n$. It was proved \cite[Lemma 2.2]{Sound00} that for odd, positive, square-free integers $d$,
\begin{align}
L(\tfrac{1}{2},\chi_{8d})^2=2A(d).
\label{equ:2.1.1}
\end{align}

\subsection{Poisson summation formula} The following lemma is \cite[Lemma 2.2]{Sound-Young}.
\begin{lem} \label{lem:2.1}
 Let $\Phi$ be a smooth function with compact support on the positive real numbers, and suppose that $n$
 is an odd integer. Then
 \[
  \sum_{(d,2)=1}\left(\frac{d}{n}\right)\Phi\left(\frac{d}{Z}\right)=\frac{Z}{2n}\left(\frac{2}          {n}\right) \sum_{k\in \mathbb{Z}}(-1)^kG_{k}(n)\hat{\Phi}\left(\frac{kZ}{2n}\right),
 \]
where
 \begin{align}
  G_k(n):=\left(\frac{1-i}{2}+\left(\frac{-1}{n}\right)\frac{1+i}{2}\right)\sum_{a\, \mymod n}
  \left(\frac{a}{n}\right)e\left(\frac{ak}{n}\right),
  \label{equ:def-G}
 \end{align}
and
 \[
  \hat{\Phi}(y):=\int_{-\infty}^{\infty} \left(\cos(2\pi xy)+\sin(2\pi xy) \right)\Phi(x)dx
 \]
is a Fourier-type transform of $\Phi$.
\end{lem}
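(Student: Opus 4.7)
The plan is to apply the classical Poisson summation formula after sorting $d$ by its residue class modulo $2n$. Since $n$ is odd, every odd residue class $a\pmod{2n}$ can be written uniquely as $a=n+2c$ with $c$ running through $\mathbb{Z}/n\mathbb{Z}$. In this parameterization one has $a\equiv 2c\pmod n$, so that $\left(\frac{a}{n}\right)=\left(\frac{2}{n}\right)\left(\frac{c}{n}\right)$, and also $e(ka/(2n))=(-1)^{k}e(kc/n)$. These identities extract exactly the factors $\left(\frac{2}{n}\right)$ and $(-1)^{k}$ that appear on the right-hand side of the lemma.

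For each odd residue class $a\pmod{2n}$, I apply Poisson summation to $\sum_{m\in\mathbb{Z}}\Phi((2nm+a)/Z)$; a change of variable $u=(2nt+a)/Z$ inside each Fourier coefficient gives
\[
\sum_{d\equiv a\,\mymod{2n}}\Phi\!\left(\frac{d}{Z}\right)=\frac{Z}{2n}\sum_{k\in\mathbb{Z}}e\!\left(\frac{ka}{2n}\right)\check{\Phi}\!\left(\frac{kZ}{2n}\right),
\]
where $\check{\Phi}(y):=\int_{-\infty}^{\infty}\Phi(x)e(-xy)\,dx$ is the ordinary Fourier transform. Summing over odd $a\pmod{2n}$ and invoking the two identities above, the left-hand side of the lemma becomes
\[
\frac{Z}{2n}\left(\frac{2}{n}\right)\sum_{k\in\mathbb{Z}}(-1)^{k}\tau_{k}(n)\,\check{\Phi}\!\left(\frac{kZ}{2n}\right),\qquad \tau_{k}(n):=\sum_{c\,\mymod n}\left(\frac{c}{n}\right)e\!\left(\frac{kc}{n}\right).
\]

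It then remains to promote the standard Fourier transform $\check{\Phi}$ into the cosine-plus-sine transform $\hat{\Phi}$ at the price of the extra factor $\frac{1-i}{2}+\left(\frac{-1}{n}\right)\frac{1+i}{2}$ appearing in $G_{k}(n)$. For this I would use the reflection identity $\tau_{-k}(n)=\left(\frac{-1}{n}\right)\tau_{k}(n)$, which follows from the substitution $c\mapsto -c$, to pair the $k$ and $-k$ contributions in the sum. Writing $\check{\Phi}(y)=C(y)-iS(y)$ and $\hat{\Phi}(y)=C(y)+S(y)$, where $C$ and $S$ are the cosine and sine transforms of $\Phi$, the required identity reduces to verifying that the coefficients of $C(kZ/(2n))$ and $S(kZ/(2n))$ agree on both sides; this is a purely algebraic check that splits cleanly into the cases $\left(\frac{-1}{n}\right)=\pm 1$. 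The $k=0$ contribution is harmless because $\tau_{0}(n)=0$ for $n>1$, and the case $n=1$ is trivial since then the character is identically $1$. This last bookkeeping step is the only place where care is required, but no genuine difficulty arises.
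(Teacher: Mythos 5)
Your argument reproduces the standard proof, which is the one in Soundararajan--Young (the paper cites their Lemma~2.2 without reproducing it). The decomposition of the odd residue classes modulo $2n$ via $a=n+2c$, the classical Poisson summation in each class, the extraction of the factors $\left(\frac{2}{n}\right)$ and $(-1)^k$, and the reflection $\tau_{-k}(n)=\left(\frac{-1}{n}\right)\tau_k(n)$ are all correctly deployed; the final algebraic check that pairing $k$ with $-k$ converts the exponential transform into $G_k(n)\hat\Phi$ works out in both cases $\left(\frac{-1}{n}\right)=\pm 1$.

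The one misstep is the claim that $\tau_0(n)=0$ for all $n>1$. That fails exactly when $n$ is an odd perfect square, for then $\left(\frac{\cdot}{n}\right)$ is the principal character modulo $n$ and $\tau_0(n)=\phi(n)\neq0$. Your conclusion survives: an odd square is $\equiv 1\pmod 8$, so $\left(\frac{-1}{n}\right)=1$ and the prefactor $\frac{1-i}{2}+\left(\frac{-1}{n}\right)\frac{1+i}{2}$ equals $1$, whence $G_0(n)=\tau_0(n)$; since the exponential transform and $\hat\Phi$ both equal $\int_{-\infty}^\infty\Phi(x)\,dx$ at the origin, the $k=0$ contributions agree in any case. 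You should replace the incorrect claim by this observation. A cleaner bookkeeping that removes the case-split on $k=0$ altogether is to write $\hat\Phi(y)=\frac{1+i}{2}F(y)+\frac{1-i}{2}F(-y)$, where $F$ denotes the ordinary Fourier transform, reindex the $F(-y)$ sum by $k\mapsto -k$, and verify for all $k$ the single identity $\frac{1+i}{2}G_k(n)+\frac{1-i}{2}G_{-k}(n)=\tau_k(n)$, which reduces to the modulus-one relation $\left(\frac{1-i}{2}+\left(\frac{-1}{n}\right)\frac{1+i}{2}\right)\left(\frac{1+i}{2}+\left(\frac{-1}{n}\right)\frac{1-i}{2}\right)=1$.
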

The precise values of the Gauss-type sum $G_k(n)$ have been calculated in \cite[Lemma 2.3]{Sound00} as follows.
\begin{lem}
 If $m$ and $n$ are relatively prime odd integers, then $G_k(mn)=G_k(m)G_k(n)$. Moreover, if $p^{\alpha}$ is 
 the largest power of $p$ dividing $k$ (setting $\alpha=\infty$ if $k=0$), then 
 \begin{align*}
 G_k(p^{\beta})=\left\{
 \begin{array}
  [c]{ll}
  0 & \text{if }\beta\leq\alpha \text{ is odd},\\
  \phi(p^{\beta}) & \text{if }\beta\leq\alpha \text{ is even},\\
  -p^{\alpha} & \text{if }\beta=\alpha+1 \text{ is even},\\
  \left(\frac{kp^{-\alpha}}{p}\right)p^{\alpha}\sqrt{p} & \text{if }\beta=\alpha+1 \text{ is odd},\\
  0 & \text{if }\beta\geq\alpha+2.
 \end{array}
 \right.
\end{align*}
\label{lem:2.2}
\end{lem}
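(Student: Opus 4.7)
The plan is to separate the multiplicativity in $n$ from the prime-power evaluation. It helps to isolate the prefactor by writing $\mathcal{E}_n := \frac{1-i}{2} + \left(\frac{-1}{n}\right)\frac{1+i}{2}$, so that $G_k(n) = \mathcal{E}_n \tau_k(n)$ with $\tau_k(n) := \sum_{a \bmod n}\left(\frac{a}{n}\right) e(ak/n)$, and to observe at once that $\mathcal{E}_n = 1$ when $n \equiv 1 \pmod 4$ and $\mathcal{E}_n = -i$ when $n \equiv 3 \pmod 4$.

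For the multiplicativity $G_k(mn) = G_k(m) G_k(n)$ with coprime odd $m, n$, I would parameterize via CRT as $a \equiv n\overline{n}\,b + m\overline{m}\,c \pmod{mn}$ with $b \bmod m$ and $c \bmod n$. The Jacobi symbol factors immediately as $\left(\frac{a}{mn}\right) = \left(\frac{b}{m}\right)\left(\frac{c}{n}\right)$, and after the substitutions $b \to nb$ in $\Z/m$ and $c \to mc$ in $\Z/n$ (which absorb the CRT inverses from the exponentials at the cost of a Jacobi symbol each), one obtains $\tau_k(mn) = \left(\frac{m}{n}\right)\left(\frac{n}{m}\right)\tau_k(m)\tau_k(n)$. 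Quadratic reciprocity rewrites the extra factor as $(-1)^{\frac{m-1}{2}\frac{n-1}{2}}$, and a four-case check against $m, n \bmod 4$ verifies the matching identity $\mathcal{E}_{mn} \cdot (-1)^{\frac{m-1}{2}\frac{n-1}{2}} = \mathcal{E}_m \mathcal{E}_n$, completing the multiplicativity.

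For the prime-power evaluation, write $k = p^\alpha k'$ with $(k',p)=1$ and use $\left(\frac{a}{p^\beta}\right) = \left(\frac{a}{p}\right)^\beta$. If $\beta \leq \alpha$, the exponential is trivial and the sum collapses to $\sum_a \left(\frac{a}{p}\right)^\beta$, which is $\phi(p^\beta)$ for $\beta$ even (and $\mathcal{E}_{p^\beta}=1$ since $p^\beta \equiv 1 \pmod 4$) and $0$ for $\beta$ odd by orthogonality. If $\beta = \alpha+1$, split $a = b + pc$ to peel $p^{\beta-1}$ off the $c$-sum and reduce to $\sum_{b \bmod p}\left(\frac{b}{p}\right)^\beta e(bk'/p)$: this equals $-1$ for $\beta$ even (giving $-p^\alpha$ with $\mathcal{E}_{p^\beta} = 1$), and the classical quadratic Gauss sum $\left(\frac{k'}{p}\right) g(p)$ for $\beta$ odd, after which the identity $\mathcal{E}_p \cdot g(p) = \sqrt{p}$ (checked against $p \bmod 4$ using $g(p) \in \{\sqrt{p}, i\sqrt{p}\}$) delivers $\left(\frac{kp^{-\alpha}}{p}\right) p^\alpha \sqrt{p}$. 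Finally, for $\beta \geq \alpha+2$, the shift $a \mapsto a + p^{\beta-\alpha-1}$ preserves the Jacobi symbol (since $p$ divides the shift) while multiplying the exponential by $e(k'/p) \neq 1$, so the sum equals a nontrivial root of unity times itself and must vanish.

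The only genuinely delicate step is the bookkeeping of the prefactor $\mathcal{E}_n$: it is precisely calibrated to cancel the quadratic-reciprocity sign in the multiplicativity and to absorb the $\{1,i\}$ phase of the classical quadratic Gauss sum at odd primes, producing the clean value $G_1(p) = \sqrt{p}$ uniformly in $p \bmod 4$. Once these two case checks on $\mathcal{E}_n$ are in hand, the rest reduces to orthogonality and the shift-invariance trick, and the tabulated values of $G_k(p^\beta)$ follow by combining the inner-sum evaluations with the appropriate value of $\mathcal{E}_{p^\beta}$ in each subcase.
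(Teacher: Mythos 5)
The paper does not prove this lemma: it cites it as \cite[Lemma 2.3]{Sound00}, so there is no in-paper proof to compare against. Your argument is correct and is the standard one. I verified the key computations: the CRT substitution $b \to nb$, $c \to mc$ does produce the factor $\left(\frac{m}{n}\right)\left(\frac{n}{m}\right)$, and the four-case check that $\mathcal{E}_{mn}(-1)^{\frac{m-1}{2}\frac{n-1}{2}} = \mathcal{E}_m\mathcal{E}_n$ goes through ($-i \cdot -i = -1$ precisely balances the reciprocity sign when $m \equiv n \equiv 3 \pmod 4$). For the prime-power table, the $\beta \le \alpha$ cases reduce to $\sum_{a} \left(\frac{a}{p}\right)^\beta$ over a full period, the $\beta = \alpha+1$ cases correctly peel off $p^{\beta-1}=p^\alpha$ from the inner sum and land on either a Ramanujan sum $c_p(k') = -1$ (even $\beta$) or a twisted quadratic Gauss sum $\left(\frac{k'}{p}\right)g(p)$ (odd $\beta$), and the identity $\mathcal{E}_p\,g(p)=\sqrt{p}$ (both when $g(p)=\sqrt{p}$, $\mathcal{E}_p = 1$ and when $g(p) = i\sqrt{p}$, $\mathcal{E}_p = -i$) yields the stated formula. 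The vanishing for $\beta \ge \alpha+2$ via the shift $a \mapsto a + p^{\beta-\alpha-1}$ is also correct: the shift is divisible by $p$ so the Jacobi symbol is unaffected, while the exponential picks up the nontrivial phase $e(k'/p)$. In short, the proposal is a complete and correct proof, and the one genuinely subtle point you identified, the calibration of $\mathcal{E}_n$ against quadratic reciprocity and the Gauss sum phase, is handled properly.
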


\subsection{Smooth  function}\label{subsec:smooth}
Let $\Phi$ be a smooth Schwarz  class function that  is compactly  supported on $[\frac{1}{2}, \frac{5}{2}]$, and   $0 \leq  \Phi(t) \leq 1$ for all $t$. For any integer $\nu \geq 0$,  define 
\[
\Phi_{(\nu)} := \underset{0 \leq j \leq \nu}{\operatorname{ max }} \int_{\frac{1}{2}} ^{\frac{5}{2}} | \Phi^{(j)} (t)| dt.
\] 
For any $s \in \mathbb{C}$, define 
\[
\check{\Phi}(s) := \int_0^{\infty} \Phi(t) t^{-s} dt.
\]
Note that  $\check{\Phi}(s)$ is a holomorphic  function of $s$. Integrating by parts $\nu$ times gives us 
\[
\check{\Phi}(s)  = \frac{1}{(s-1) (s-2) \cdots (s-\nu)} \int_0^{\infty} \Phi^{(\nu)} (t) t^{-s+\nu} dt.
\] 
Hence, for $\Re(s)<1$, we see that 
\begin{equation}
\check{\Phi}(s) \ll _{\nu} \frac{3^{|\Re(s)|}}{|s-1|^\nu} \Phi_{(\nu)}.
\label{equ:bd for fei}
\end{equation}
\subsection{Some lemmas} 
The following lemma is the sharpest upper bound up to date for the fourth moment of quadratic Dirichlet $L$-functions, due to Heath-Brown \cite[Theorem 2]{HB01}.
\begin{lem}
Suppose $\sigma+it$ is a complex number with $\sigma \geq \frac{1}{2}$. Then
\[
\sideset{}{^\flat}\sum_{|d| \leq X}|L(\sigma+it,\chi_d)|^4 \ll  X^{1+\varepsilon}(1+|t|)^{1+\varepsilon}.
\]
\label{lem:2.3}
\end{lem}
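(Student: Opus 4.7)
The plan is to reduce matters to an application of Heath-Brown's quadratic large sieve, after representing $L(\sigma+it,\chi_d)^2$ by an approximate functional equation. Write $s=\sigma+it$ with $\sigma \geq \tfrac{1}{2}$. Since $L(s,\chi_d)^2 = \sum \tau(n)\chi_d(n) n^{-s}$ has a completed functional equation with analytic conductor $\asymp |d|(1+|t|)$, a standard Mellin-contour argument (shifting the line of integration in an identity of the form \eqref{equ:def-w} applied to $L(s+w,\chi_d)^2$) yields
\[
L(s,\chi_d)^2 = \sum_{n=1}^\infty \frac{\tau(n)\chi_d(n)}{n^{s}} V_s\!\left(\frac{n}{Y}\right) + \Psi_d(s) \sum_{n=1}^\infty \frac{\tau(n)\chi_d(n)}{n^{1-s}} \widetilde V_s\!\left(\frac{n}{Y}\right),
\]
where $Y \asymp |d|(1+|t|)$, $V_s$ and $\widetilde V_s$ are smooth weights with rapid polynomial decay past their argument $1$, and $|\Psi_d(s)| \ll (1+|t|)^{1-2\sigma}$ uniformly.

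Next, writing $|L(\sigma+it,\chi_d)|^4 = |L(\sigma+it,\chi_d)^2|^2$ and dyadically decomposing in the argument of $V_s, \widetilde V_s$, the problem reduces to bounding expressions
\[
\sideset{}{^\flat}\sum_{|d|\leq X}\left|\sum_{n \leq N}\frac{\tau(n)\chi_d(n)}{n^{\sigma+it}}\right|^{2} \qquad \text{with } N \ll X(1+|t|),
\]
together with a structurally similar dual piece carrying the prefactor $|\Psi_d(s)|^2$. I would then invoke Heath-Brown's quadratic large sieve, which for any complex coefficients $b_n$ gives
\[
\sideset{}{^\flat}\sum_{|d|\leq X}\left|\sum_{n \leq N} b_n \chi_d(n)\right|^{2} \ll (NX)^{\varepsilon}(N+X)\sum_{n \leq N}|b_n|^2.
\]
Taking $|b_n| \ll \tau(n)/n^{\sigma}$ and using $\sum_{n \leq N}\tau(n)^2/n \ll (\log N)^{4}$ together with $N \ll X(1+|t|)$, the primary contribution is bounded by $X^{\varepsilon}(1+|t|)^{\varepsilon}(X+N)(\log N)^4 \ll X^{1+\varepsilon}(1+|t|)^{1+\varepsilon}$. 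The dual contribution carries the extra factor $|\Psi_d(s)|^2 \ll (1+|t|)^{2-4\sigma}$, which is harmless for $\sigma \geq \tfrac{1}{2}$ since it is compensated by the corresponding shift $\sigma \mapsto 1-\sigma$ in the coefficient size.

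The main obstacle I expect is the passage to squarefree-supported coefficients when applying the large sieve. The divisor-function coefficient $\tau(n)$ is not squarefree-supported, so one has to factor each $n$ as $n=\ell^2 m$ with $m$ squarefree, observe that $\chi_d(n)=\chi_d(m)$ whenever $(\ell,d)=1$, pull out the $\ell$-sum, and collapse the inner sum into one over squarefree $m$ using Cauchy--Schwarz, paying only a harmless power of $\log$. A secondary technical point is the uniformity of the approximate functional equation in $t$: the Mellin weights must be chosen so that the ratio of Gamma factors delivers genuine rapid decay once $n$ exceeds the analytic-conductor threshold uniformly in $t$, which follows from Stirling's formula after a contour shift calibrated in the $t$-aspect.
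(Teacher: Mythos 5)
The paper does not actually prove this lemma: it is quoted verbatim as \cite[Theorem 2]{HB01}, i.e.\ Theorem~2 of Heath-Brown's 1995 paper on the quadratic large sieve, and used as a black box. Your proposal is, in outline, the standard derivation of that theorem from the quadratic large sieve (Theorem~1 of the same paper), so the route is the right one; but two steps, as written, have real gaps.

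First, the large-sieve inequality you invoke, with $\sum_{n\le N}|b_n|^2$ on the right, is only valid for $b_n$ supported on squarefree integers. For general coefficients Heath-Brown's inequality reads
\[
\sideset{}{^\flat}\sum_{|d|\le X}\Big|\sum_{n\le N}b_n\chi_d(n)\Big|^2
\ll (NX)^{\varepsilon}(N+X)\sum_{\substack{n_1,n_2\le N\\ n_1n_2=\square}}|b_{n_1}b_{n_2}|,
\]
and the off-diagonal pairs must be controlled. Your sketch of a fix (factor $n=\ell^2m$, pull out $\ell$) runs into the $d$-dependent coprimality condition $(\ell,d)=1$ and into the fact that $\tau(\ell^2m)\neq\tau(\ell^2)\tau(m)$ when $(\ell,m)>1$; neither is fatal, but neither is dispatched by ``Cauchy--Schwarz paying a power of $\log$'' without more work. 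It is cleaner to use the general inequality and check directly, by parametrizing $n_1=uv^2$, $n_2=uw^2$ with $u$ squarefree, that $\sum_{n_1n_2=\square,\,n_i\le N}\tau(n_1)\tau(n_2)(n_1n_2)^{-\sigma}\ll(\log N)^{O(1)}$ for $\sigma\ge\tfrac12$.

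Second, and more substantively, your truncation length $Y\asymp |d|(1+|t|)$ depends on $d$, so the weights $V_s(n/Y)$ and $\widetilde V_s(n/Y)$ vary with $d$; you cannot then ``dyadically decompose in $n$'' and pass directly to a $d$-independent inner sum of length $N$, which is what the large sieve requires. One must either restrict $|d|$ to a dyadic range (so $Y$ is essentially constant) before truncating and then sum over dyadic ranges, or separate the variables by a further Mellin integral applied to a truncated sum. This is routine but it is exactly the place where the $(1+|t|)$-uniformity is earned, so it cannot be left implicit. With both repairs made, the computation you indicate for the main and dual pieces does give $X^{1+\varepsilon}(1+|t|)^{1+\varepsilon}$.
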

Assuming GRH, the bound in Lemma \ref{lem:2.3} can  be improved by the following theorem.
\begin{thm}\label{thm:3.1}
Assume GRH for $L(s,\chi_d)$ for all fundamental discriminants $d$. Let $z_1,z_2 \in \mathbb{C}$ with $0 \leq \Re (z_1), \Re (z_2) \leq \frac{1}{\log X}$, and $|\Im (z_1)|, |\Im (z_2)| \leq X$. Then
 \[
  \sideset{}{^\flat}\sum_{|d|\leq X}  |L(\tfrac{1}{2}+z_1,\chi_d)|^2|L(\tfrac{1}{2}+z_2,\chi_d)|^2 
 \ll X(\operatorname{log}X)^{4+\varepsilon}\left(1+\min \left \{ (\log X)^6, \frac{1}{|\Im (z_1)- \Im (z_2)|^6} \right \} \right).
 \]
\end{thm}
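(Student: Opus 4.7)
The plan is to adapt Soundararajan's GRH-based moment method \cite{Sound01} to this shifted fourth-moment setting, following the pattern of \cite[Section 4]{Sound-Young} for shifted second moments. The starting point is Soundararajan's pointwise upper bound: under GRH, for $2 \leq y \leq X^2$, $|d|\leq X$ a fundamental discriminant, and $z$ with $\Re(z) \geq 0$, $|\Im(z)|\leq X$,
\[
\log|L(\tfrac{1}{2}+z,\chi_d)| \leq \Re\sum_{p^n\leq y}\frac{\chi_d(p^n)\log p}{np^{n(1/2+z+\lambda/\log y)}}\frac{\log(y/p^n)}{\log y} + \frac{(1+\lambda)\log X}{\log y} + O(1),
\]
for any fixed $\lambda>0$. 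Applying this to each of $\log|L(\tfrac12+z_j,\chi_d)|$ for $j=1,2$ and multiplying yields the pointwise bound
\[
|L(\tfrac12+z_1,\chi_d)|^2 |L(\tfrac12+z_2,\chi_d)|^2 \leq e^{4R+O(1)}\,\exp\bigl(A(d;y)\bigr),
\]
where $R = (1+\lambda)\log X/\log y$ and $A(d;y) = \sum_{p\leq y}\chi_d(p) c_p/\sqrt{p}$ is a real Dirichlet polynomial with coefficients $c_p = 2p^{-\Re(z_1)}\cos(\Im(z_1)\log p) + 2p^{-\Re(z_2)}\cos(\Im(z_2)\log p)$, up to smooth cutoff weights; the prime-power contributions are absorbed into the $O(1)$.

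The next step is to estimate $\sum_d \exp(A(d;y))$ via a high-moment argument. Expanding the exponential as a Taylor series and invoking the orthogonality of quadratic characters (with main term $\sum_{d}\chi_d(n) \asymp X$ supported on $n$ a perfect square), the $2j$-th moment $\sum_d A(d;y)^{2j}$ is approximately $X\cdot(2j-1)!!\,\sigma^{2j}$ with variance
\[
\sigma^2 = \sum_{p\leq y}\frac{c_p^2}{p}\left(\frac{\log(y/p)}{\log y}\right)^2.
\]
Expanding $c_p^2$ via the product-to-sum identities decomposes $\sigma^2$ into a diagonal contribution $\asymp\log\log y$, self-frequency pieces $\sum_p\cos(2\Im(z_j)\log p)/p$, and cross-frequency pieces $\sum_p\cos((\Im(z_1)\pm\Im(z_2))\log p)/p$. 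Using the standard GRH-compatible bound $\sum_{p\leq y}\cos(\tau\log p)/p \leq \log\min(\log y, 1/|\tau|) + O(1)$, the dominant shift-dependent contribution comes from $\log\min(\log y, 1/|\Im(z_1)-\Im(z_2)|)$, while the sum-frequency and self-frequency pieces are absorbed into an $O(\log\log y)$ tolerance.

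From here I would apply Chebyshev's inequality to the tail $\mathcal{N}(V) = \#\{|d|\leq X : A(d;y) > V\}$ using the $2j$-th moment bound (with $j$ chosen to optimize each dyadic range of $V$), then recover the moment via $\sum_d \exp(A) = \int e^V\,d\mathcal{N}(V)$. Choosing $\log y$ on the order of $\log X$ (so that $R = O(1)$) and iterating with multiple scales of $y$ as in the proof of \cite[Theorem 4.2]{Sound-Young} to sharpen the exponent yields the claimed bound
\[
X(\log X)^{4+\varepsilon}\bigl(1+\min\{(\log X)^6, |\Im(z_1)-\Im(z_2)|^{-6}\}\bigr).
\]

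The main obstacle will be the precise matching of the exponents $4$ and $6$. A direct application of the MGF-type estimate $\sum_d\exp(A(d;y)) \ll X\exp(\sigma^2/2)$ yields only the weaker bound $X(\log X)^{2}\min(\log X, |\Im(z_1)-\Im(z_2)|^{-1})^2$, so recovering the target exponents requires running the iterated Soundararajan--Young argument carefully: partitioning the range of deviations $V$ into dyadic pieces, applying the pointwise inequality with progressively smaller $y$ on each piece, and balancing the GRH error $\log X/\log y$ against the Dirichlet polynomial's variance growth. Careful bookkeeping of the sum-frequency cross terms (which must be shown to contribute only an $(\log X)^{\varepsilon}$ loss even when $\Im(z_1)+\Im(z_2)$ is small), of the prime-power contributions (which behave like a bounded Euler product), and of the $\varepsilon$-loss at each scale, then completes the proof.
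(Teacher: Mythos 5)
Your proposal follows the paper's approach: Soundararajan's GRH pointwise bound on $\log|L(\tfrac12+z,\chi_d)|$, a split of the resulting real Dirichlet polynomial into a long and a short piece, high-moment/orthogonality estimates giving a large-deviation bound of the type in \cite[Section 6]{Sound-Young}, and finally integration over the deviation $V$ (with $\exp(k\mathcal{M}+\tfrac{k^2}{2}\mathcal{V})$ as the resulting moment-generating bound) together with a case analysis on which of the $\mathcal{L}$-terms in $2\mathcal{M}+2\mathcal{V}$ are governed by $|\Im(z_1)-\Im(z_2)|$ rather than by $\log\log X$, producing exactly the exponents $4$ and $6$. One small correction to your closing worry: the direct MGF-type bound is not too weak here—with $k=2$ and the eight $\mathcal{L}$-summands of $2\mathcal{M}+2\mathcal{V}$ correctly decomposed (six of them bounded via $\mathcal{L}(|t_1-t_2|,X)$ when $|t_1-t_2|\geq 1/\log X$, four by $\log\log X$), one lands on $X(\log X)^{4+\varepsilon}\bigl(1+|t_1-t_2|^{-6}\bigr)$ without any further iteration beyond splitting into the cases $|t_1-t_2|\lessgtr 1/\log X$.
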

Theorem  \ref{thm:3.1} is similar to \cite[Corollary 5.1]{Sound-Young}. Indeed, the proof of it follows closely the proof of \cite[Corollary 5.1]{Sound-Young}  and the argument in \cite[Section 4]{Sound01}.
Analogous  results to Theorem  \ref{thm:3.1} were obtained by Chandee \cite[Theorem 1.1]{Chandee} for the moments of the Riemann zeta function, and by Munsch  \cite[Theorem 1.1]{Munsch} for the moments of  Dirichlet $L$-functions modulo $q$. The proof of  Theorem  \ref{thm:3.1} is postponed to Section \ref{sec:upper bd}.

We remark that Lemma \ref{lem:2.3} is used to bound the error terms in the proof of Theorem \ref{main-thm1}, while both Lemma \ref{lem:2.3} and Theorem \ref{thm:3.1} are needed to bound the error terms in the proof of Theorem \ref{main-thm}.

\section{Setup of the problem}\label{sec:setup}
Let $\Phi$ be a smooth function as described in Subsection \ref{subsec:smooth}. We consider the following smoothed version of the fourth moment
\begin{align*}
\sideset{}{^*}\sum_{(d,2)=1}L(\tfrac{1}{2},\chi_{8d})^4 \Phi\left(\tfrac{d}{X}\right).
\end{align*}
Using the  approximate functional equation (\ref{equ:2.1.1}), we have
\begin{align}
\sideset{}{^*}\sum_{(d,2)=1}L(\tfrac{1}{2},\chi_{8d})^4 \Phi\left(\tfrac{d}{X}\right)
=\sideset{}{^*}\sum_{(d,2)=1}\left(A_{8d}(\tfrac{1}{2};8d)\right)^2\Phi\left(\tfrac{d}{X}\right),
\label{equ:13.0}
\end{align}
where
\begin{align}
A_{t}(\tfrac{1}{2};8d): = 2\sum_{n=1}^{\infty}\frac{\tau(n)\chi_{8d}(n)}{\sqrt{n}}\omega\left(\frac{n\pi}{t}\right).
\label{equ:13.1}
\end{align}
Let $X^{\frac{9}{10}} \leq U_1 \leq U_2 \leq X $ be  two parameters that will be chosen later.
%that defined later that satisfy  $U_i \leq X$, $\log U_i \ll \log X$ and $\log X\ll \log U_i$, $i=1,2$.  
Define
\begin{align}
S(U_1,U_2):=\sideset{}{^*}\sum_{(d,2)=1} A_{U_1}(\tfrac{1}{2};8d) A_{U_2}(\tfrac{1}{2};8d) \Phi\left(\tfrac{d}{X}\right).
\label{equ:13.1+}
\end{align}

 We remark that (\ref{equ:13.0}) is approximately equal to (\ref{equ:13.1+}) by choosing appropriate values for $U_1$ and $U_2$. This will be explained in Section \ref{sec:proof-main}.

Combining (\ref{equ:13.1}) and (\ref{equ:13.1+}), we  obtain that
\begin{align}
S(U_1,U_2)= 4\sideset{}{^*}\sum_{(d,2)=1}\sum_{n_1=1}^{\infty}\sum_{n_2=1}^{\infty}\frac{\tau(n_1)\tau(n_2)\chi_{8d}(n_1n_2)}{\sqrt{n_1n_2}}h(d,n_1,n_2),
\label{equ:13.2}
\end{align}
where 
\begin{align}
h(x,y,z) := \Phi\left(\frac{x}{X}\right)\omega\left(\frac{y\pi}{U_1}\right)\omega\left(\frac{z\pi}{U_2}\right).
\label{equ:def-h}
\end{align}
Using  the M\"{o}bius inversion to remove the square-free condition in (\ref{equ:13.2}) gives
\begin{align}
&S(U_1,U_2)\nonumber\\
&= 4\sum_{(d,2)=1} \sum_{a^2|d} \mu(a) \sum_{n_1=1}^{\infty}\sum_{n_2=1}^{\infty}\frac{\tau(n_1)\tau(n_2)\chi_{8d}(n_1n_2)}{\sqrt{n_1n_2}}h(d,n_1,n_2)\nonumber\\
&= 4\sum_{(a,2)=1} \mu(a) \sum_{(d,2)=1} \sum_{(n_1,a)=1}\sum_{(n_2,a)=1}\frac{\tau(n_1)\tau(n_2)\chi_{8d}(n_1n_2)}{\sqrt{n_1n_2}}h(a^2d,n_1,n_2)\nonumber\\
&=4\left( \sum_{\substack{a\leq Y \\ (a,2)=1}}+\sum_{\substack{a> Y \\ (a,2)=1}} \right) \mu(a) \sum_{(d,2)=1} \sum_{(n_1,a)=1}\sum_{(n_2,a)=1}\frac{\tau(n_1)\tau(n_2)\chi_{8d}(n_1n_2)}{\sqrt{n_1n_2}}h(a^2d,n_1,n_2)\nonumber\\
& =: S_1 +S_2.
\label{equ:13.3}
\end{align} 
In the above, we  let  $S_1$ denote the terms with $a \leq Y$,  where $Y$ is a parameter that satisfies $Y \leq X$. The value of $Y$  will be chosen later.  Also, we let  $S_2$ denote the terms with $a > Y $.  The terms $S_1$ contribute to the main term. We will  discuss $S_1$ in  Sections \ref{sec:k=0}, \ref{sec:k=square}, \ref{sec:error}. The terms $S_2$ contribute to the error  term by the  following lemma. 
\begin{lem}
Unconditionally, we have  $S_2 \ll X^{1+\varepsilon}Y^{-1}$. Under GRH, we have $S_2 \ll   XY^{-1} \log^{44}X$. 
\label{lem:13.1}
\end{lem}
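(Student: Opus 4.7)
The plan is to apply Cauchy--Schwarz in the $d$-sum to decouple the two factors $B_1,B_2$, and then estimate the resulting inner second moment by invoking the fourth-moment bounds for $L(\tfrac{1}{2},\chi_{8d})$: Lemma \ref{lem:2.3} unconditionally and Theorem \ref{thm:3.1} under GRH. Writing $B_j(d,a):=\sum_{(n,a)=1}\tau(n)\chi_{8d}(n)n^{-1/2}\omega(n\pi/U_j)$, so that the inner double sum in $S_2$ reads $\sum_{(d,2)=1}\Phi(a^2d/X)B_1(d,a)B_2(d,a)$, the support of $\Phi$ restricts the outer sum to $a\leq\sqrt{5X/2}$, and Cauchy--Schwarz in $d$ gives
\[
|S_2|\ll \sum_{Y<a\leq\sqrt{X}}\mathcal{M}_1(a)^{1/2}\mathcal{M}_2(a)^{1/2},\qquad \mathcal{M}_j(a):=\sum_{(d,2)=1}\Phi\!\left(\tfrac{a^2d}{X}\right)|B_j(d,a)|^2.
\]

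The second step is to rewrite each $B_j(d,a)$ via Mellin inversion of $\omega$ (from \eqref{equ:def-w}) as
\[
B_j(d,a)=\frac{1}{2\pi i}\int_{(c)}g(s)\,U_j^{s}\,L^{(a)}\bigl(\tfrac{1}{2}+s,\chi_{8d}\bigr)^{2}\frac{ds}{s},
\]
where $L^{(a)}(s,\chi):=L(s,\chi)\prod_{p\mid a}(1-\chi(p)p^{-s})$, with $c=\varepsilon$ in the unconditional case and $c=1/\log X$ under GRH. Cauchy--Schwarz applied to this contour integral with weight $|g(s)|\,|ds|/|s|$ yields
\[
|B_j(d,a)|^2\ll \int_{(c)}\frac{|g(s)|}{|s|}U_j^{2c}\bigl|L^{(a)}(\tfrac{1}{2}+s,\chi_{8d})\bigr|^{4}|ds|,
\]
after absorbing the bounded factor $\int|g(s)|/|s|\,|ds|\ll_\varepsilon 1$ (which costs at most $O(\log\log X)$ when $c=1/\log X$). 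The Euler factor at primes dividing $a$ is uniformly controlled by $\prod_{p\mid a}(1+p^{-1/2})^{4}\ll_\varepsilon a^{\varepsilon}$, reducing matters to the fourth moment of the $L$-function itself.

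Interchanging the $d$-sum with the $s$-integral and invoking the appropriate fourth-moment estimate then finishes the argument. After a decomposition $d=b^2 e$ with $e$ odd and square-free (so that bounds for fundamental discriminants apply to the $e$-sum, while summation over $b$ costs only $X^{\varepsilon}$), Lemma \ref{lem:2.3} gives $\sum_{(d,2)=1}\Phi(a^2d/X)|L(\tfrac{1}{2}+c+it,\chi_{8d})|^4\ll (X/a^2)^{1+\varepsilon}(1+|t|)^{1+\varepsilon}$; the rapid decay of $g(c+it)$ makes the residual $t$-integral $O(1)$, so $\mathcal{M}_j(a)\ll (X/a^2)^{1+\varepsilon}$ and hence $|S_2|\ll X^{1+\varepsilon}Y^{-1}$. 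Under GRH, Theorem \ref{thm:3.1} with $z_1=z_2=c+it$ (for which the $\min$ in the statement equals $(\log X)^6$) gives $\sum_{(d,2)=1}\Phi|L|^{4}\ll (X/a^2)(\log X)^{10+\varepsilon}$, and the residual $s$-integration contributes only polylogarithmic factors, so $\mathcal{M}_j(a)\ll (X/a^2)(\log X)^{C}$ with a computable $C$ that is well below $44$; summing over $a>Y$ then gives the second claim. The main delicate point is the uniform handling of the Euler factor $\prod_{p\mid a}(1-\chi_{8d}(p)p^{-1/2-s})^{2}$ introduced by the coprimality condition $(n,a)=1$; it is fortunately swallowed by $a^{\varepsilon}$, so everything else reduces to routine Cauchy--Schwarz together with the moment bounds already at our disposal.
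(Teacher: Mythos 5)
Your proposal takes a genuinely different route from the paper, though both reduce to the same ingredients (Lemma~\ref{lem:2.3} and Theorem~\ref{thm:3.1}). The paper does not use Cauchy--Schwarz at all: it substitutes $d=lb^2$ and groups $c=ab$, which collapses the two coprimality conditions $(n_i,a)=1$ and the implicit $(n_i,b)=1$ into a single condition $(n_i,c)=1$; the inner $(n_1,n_2)$-sum is then expressed as a single double Mellin integral producing $L_c(\tfrac12+u,\chi_{8l})^2 L_c(\tfrac12+v,\chi_{8l})^2$, and $2ab\leq a^2+b^2$ decouples the two factors inside the integrand. You instead apply Cauchy--Schwarz in $d$ to decouple $B_1,B_2$ up front, then Mellin-transform each $B_j$ separately, then apply Cauchy--Schwarz once more on the contour. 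Your route is cleaner in that it avoids the change of variables $c=ab$ and the divisor-counting step $\sum_{a>Y,\,a\mid c}1\leq\tau(c)$; it actually saves logarithms, since you sum $\tau^4(a)/a^2$ over $a>Y$ rather than $\tau^5(c)/c^2$ over $c>Y$.

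One imprecision in the GRH case: you first note the Euler factor at $p\mid a$ is $\ll_\varepsilon a^\varepsilon$, but then state $\mathcal M_j(a)\ll(X/a^2)(\log X)^C$ with no $a$-dependence. Those are inconsistent, and the $a^\varepsilon$ version would only yield $XY^{-1+\varepsilon}(\log X)^C$, which is a genuinely weaker form than the claimed $XY^{-1}\log^{44}X$ since $Y$ is a power of $X$. The fix is the one the paper uses: bound the Euler factor by $\prod_{p\mid ab}(1+p^{-1/2})^4\leq\tau(ab)^4\leq\tau(a)^4\tau(b)^4$ (a divisor bound, not $X^\varepsilon$). Then $\sum_b\tau^4(b)/b^2\ll1$ and $\sum_{a>Y}\tau^4(a)/a^2\ll Y^{-1}(\log X)^{15}$, which combined with the $(\log X)^{\sim13}$ from the moment bound and the $s$-integration lands comfortably below the exponent $44$. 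With that adjustment your argument is complete and matches the lemma.
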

\begin{proof}
Write $d=lb^2$, where $l$ is square-free and $b$ is positive. Grouping terms in $S_2$ according to $c=ab$, we deduce that 
\begin{align}
S_2&= 4\sum_{(c,2)=1}\sum_{\substack{a> Y \\ a|c}} \mu(a)\sideset{}{^*}\sum_{(l,2)=1}\sum_{(n_1,c)=1}\sum_{(n_2,c)=1}\frac{\tau(n_1)\tau(n_2)\chi_{8l}(n_1n_2)}{\sqrt{n_1n_2}}h (c^2l,n_1,n_2 ) \nonumber \\
&= \frac{4}{(2\pi i )^2}\sum_{(c,2)=1}\sum_{\substack{a> Y \\ a|c}} \mu(a)
\int_{(\frac{1}{2}+\varepsilon)} \int_{(\frac{1}{2}+\varepsilon)}   \frac{g(u)g(v)}{uv}  U_1^u U_2^v \nonumber\\
&\quad \times \sideset{}{^*}\sum_{(l,2)=1} \Phi\left( \frac{c^2l}{X}\right)
 L_c (\tfrac{1}{2}+u,\chi_{8l})^2  L_c (\tfrac{1}{2}+v,\chi_{8l})^2 \ du\ dv,
\label{equ:13.4}
\end{align}
where for $ \Re(s)>1$, $L_c(s,\chi)$ is given by the Euler product of $L(s,\chi)$ with omitting all prime factors of $c$. The last equation follows by the definition of $h(x,y,z)$ in (\ref{equ:def-h}).
Moving the lines of the integral to $\Re(u) = \Re(v) = \frac{1}{\log X}$, the double integral above is bounded by
\begin{align}
\ll (\log^2 X ) \tau^4(c) \int_{(\frac{1}{\log X})} \int_{(\frac{1}{\log X})}  |g(u)g(v)| \sideset{}{^*}\sum_{\substack{(l,2)=1 \\ l \leq \frac{5X}{2c^2}}}  |L (\tfrac{1}{2}+u,\chi_{8l} )|^4 \  |du| \ |dv|.
\label{equ:13.5}
\end{align}
Here we use the  inequalities $2ab \leq a^2 + b^2$ and 
$
|L_c(\tfrac{1}{2}+u,\chi_{8l})| \leq 
\tau(c) | L(\tfrac{1}{2}+u,\chi_{8l})|.
$

By Theorem \ref{thm:3.1}, we see that for $|\Im(u)| \leq \frac{X}{c^2}$,
\begin{align}
\sideset{}{^*}\sum_{\substack{(l,2)=1 \\ l \leq \frac{5X}{2c^2}}} |L(\tfrac{1}{2}+u,\chi_{8l}) |^4 \ll \frac{X}{c^2} \log^{11} X.
\label{equ:13.5+}
\end{align}
Also, by Lemma \ref{lem:2.3}, we get that 
\begin{align}
\sideset{}{^*}\sum_{\substack{(l,2)=1 \\ l \leq \frac{5X}{2c^2}}} |L(\tfrac{1}{2}+u,\chi_{8l})| ^4 
\ll \left( \frac{X}{c^2} \right)^{1+\varepsilon} (1+|\Im(u)|)^{1+\varepsilon}.
\label{equ:13.6}
\end{align}
Substituting both  (\ref{equ:13.5+})  and (\ref{equ:13.6})  in (\ref{equ:13.5}), we can bound (\ref{equ:13.5})  by
\begin{align*}
\ll \frac{\tau^4(c)}{c^2} X \log^{13} X.
\end{align*}
Together with (\ref{equ:13.4}),  this yields
\[
S_2 \ll X \log^{13} X \sum_{(c,2)=1} \frac{\tau^4(c)}{c^2}  \sum_{\substack{a>Y\\ a|c}} 1
\ll X \log^{13} X\sum_{c>Y}\frac{\tau^5(c)}{c^2}  \ll XY^{-1} \log^{44}X .
\]
This completes the proof of the conditional part of the lemma. The unconditional part follows similarly by   substituting (\ref{equ:13.6})  in (\ref{equ:13.5}).
\end{proof}
Now we consider $S_1$. Using the Poisson summation formula (see Lemma \ref{lem:2.1}) for the sum over $d$ in $S_1$, we obtain that
\begin{multline}
S_1=2X \sum_{\substack{a\leq Y \\ (a,2)=1}} \frac{\mu(a)}{a^2}\sum_{k \in \mathbb{Z}} (-1)^k \sum_{(n_1,2a)=1} \sum_{(n_2,2a)=1} \frac{\tau(n_1)\tau(n_2)}{\sqrt{n_1n_2}}\frac{G_k(n_1n_2)}{n_1n_2}\\
\times \int_{-\infty}^{\infty} h(xX,n_1,n_2) ( \cos + \sin ) \left( \frac{2\pi kxX}{2n_1n_2a^2}\right) dx. 
\label{equ:13.a}
\end{multline}

Let $S_1(k=0)$ denote the sum above over $k=0$, which are   called diagonal terms. Let $S_1(k\neq 0)$ denote the sum  over $k\neq 0$. Write $S_1(k\neq 0) = S_1(k= \Box) + S_1(k\neq \Box)$,  where $S_1(k= \Box)$ denotes the terms with square $k$, and $S_1(k \neq \Box)$ denotes the remaining terms. We call $S_1(k= \Box) $ off-diagonal terms. We will discuss $S_1(k=0)$,  $S_1(k= \Box) $, and  $S_1(k\neq \Box)$ in Section \ref{sec:k=0}, \ref{sec:k=square}, \ref{sec:error}, respectively.

\section{Evaluation of  $S_1(k=0)$} \label{sec:k=0}
In this section, we  shall   extract one main term of  $S_1$ from $S_1(k=0)$. The argument here is similar to \cite[Section 3.2]{Sound-Young}.

It follows  from  the  definition of $G_k(n)$ in (\ref{equ:def-G}) that $G_0(n) =  \phi(n)$ if $n=\Box$, and $G_0 (n) =  0$ otherwise.
By this fact and (\ref{equ:13.a}),  we see that 
\begin{align}
S_1(k=0)
&=2X \sum_{\substack{a\leq Y \\ (a,2)=1}} \frac{\mu(a)}{a^2}\sum_{\substack{(n_1n_2,2a)=1\\ n_1n_2 = \Box}} \frac{\tau(n_1)\tau(n_2)}{\sqrt{n_1n_2}}\frac{\phi(n_1n_2)}{n_1n_2}
\int_{-\infty}^{\infty} h(xX,n_1,n_2) dx \nonumber\\
&= 2X \sum_{\substack{(n_1n_2,2)=1\\ n_1n_2 = \Box}}\frac{\tau(n_1)\tau(n_2)}{\sqrt{n_1n_2}} \frac{\phi(n_1n_2)}{n_1n_2}\sum_{\substack{a\leq Y \\ (a,2n_1n_2)=1}} \frac{\mu(a)}{a^2}\int_{-\infty}^{\infty} h(xX,n_1,n_2) dx.
\label{equ:14.1}
\end{align}
Observe  that
\begin{align*}
\sum_{\substack{a\leq Y \\ (a,2n_1n_2)=1}} \frac{\mu(a)}{a^2} 
= \frac{8}{\pi^2}\prod_{p|n_1n_2}\left(1-\frac{1}{p^2} \right)^{-1} +O \left(Y^{-1}\right).
\end{align*}
Inserting this into (\ref{equ:14.1}), combined with 
\[
\frac{\phi(n_1n_2)}{n_1n_2} \prod_{p|n_1n_2} \left(1 - \frac{1}{p^2} \right)^{-1} = \prod_{p|n_1n_2} \frac{p}{p+1},
\]
we  obtain that
\begin{multline}
S_1(k=0)
=\frac{16X}{\pi^2}\sum_{\substack{(n_1n_2,2)=1\\ n_1n_2 = \Box}}\frac{\tau(n_1)\tau(n_2)}{\sqrt{n_1n_2}} \prod_{p|n_1n_2} \frac{p}{p+1}\int_{-\infty}^{\infty} h(xX,n_1,n_2) dx \\
 + O \left(\frac{X}{Y} \sum_{\substack{(n_1n_2,2)=1\\ n_1n_2 = \Box}}\frac{\tau(n_1)\tau(n_2)}{\sqrt{n_1n_2}} \int_{-\infty}^{\infty} |h(xX,n_1,n_2)| dx\right).
\label{equ:14.1+}
\end{multline}
Now we simplify the error term above. Recall that  $w(\xi)$ is  bounded as 
$\xi$ near 0 and decreases exponentially as $\xi \rightarrow + \infty$.  It follows that
\begin{align}
\sum_{\substack{(n_1n_2,2)=1\\ n_1n_2 = \Box}}\frac{\tau(n_1)\tau(n_2)}{\sqrt{n_1n_2}} \int_{-\infty}^{\infty} |h(xX,n_1,n_2)| dx 
&\ll
\sum_{\substack{(n_1n_2,2)=1\\ n_1n_2 = \Box}}\frac{\tau(n_1)\tau(n_2)}{\sqrt{n_1n_2}} \left( 1+\frac{n_1}{U_1} \right)^{-100} \left( 1+\frac{n_2}{U_2} \right)^{-100}  \nonumber\\
& \ll  \log^{11} X .
\label{equ:14.1++}
\end{align}
The last inequality follows by separating the  sum into two parts corresponding to whether $n_1, n_2 \leq U_1U_2$. Combining (\ref{equ:14.1+}) and  (\ref{equ:14.1++}),   we have
\begin{align*}
S_1(k=0)
=\frac{16X}{\pi^2}\sum_{\substack{(n_1n_2,2)=1\\ n_1n_2 = \Box}}\frac{\tau(n_1)\tau(n_2)}{\sqrt{n_1n_2}} \prod_{p|n_1n_2} \frac{p}{p+1}\int_{-\infty}^{\infty} h(xX,n_1,n_2)dx 
 +O \left(X Y^{-1}\log^{11} X \right).
\end{align*}
Recall $h(x,y,z)$ from (\ref{equ:def-h}) and  $\omega (\xi)$ from (\ref{equ:def-w}). We have
\begin{align}
&S_1(k=0) \nonumber \\
&=\frac{16X}{\pi^2}\int_{-\infty}^{\infty} \Phi\left( x \right) dx \frac{1}{(2\pi i)^2}\int_{(1)}\int_{(1)}\frac{g(u)g(v)}{uv}U_1^u U_2^v \sum_{\substack{(n_1n_2,2)=1\\ n_1n_2 = \Box}}\frac{\tau(n_1)\tau(n_2)}{n_1^{\frac{1}{2}+u}n_2^{\frac{1}{2}+v}} \prod_{p|n_1n_2} \frac{p}{p+1}\ du\ dv \nonumber \\
&\quad +O \left(X Y^{-1}\log^{11} X\right).
\label{equ:14.2}
\end{align}
\begin{lem}
For $\Re (\alpha),\Re (\beta) > \frac{1}{2}$, we have
\begin{align}
\sum_{\substack{(n_1n_2,2)=1\\ n_1n_2 = \Box}}\frac{\tau(n_1)\tau(n_2)}{n_1^{\alpha}n_2^{\beta}} \prod_{p|n_1n_2} \frac{p}{p+1}
= \zeta (2\alpha)^3 \zeta (2\beta)^3 \zeta (\alpha+\beta)^4  Z_1(\alpha,\beta),
\label{equ:Z}
\end{align}
where $Z_1(\alpha,\beta)$ is defined by
\begin{align*}
 Z_1(\alpha,\beta) := \prod_p Z_{1,p}(\alpha,\beta).
\end{align*}
Here
\begin{align*}
Z_{1,2}(\alpha,\beta) : =  \left( 1- \frac{1}{4^{\alpha}}\right)^3 \left( 1- \frac{1}{4^{\beta}}\right)^3 \left( 1- \frac{1}{2^{\alpha+\beta}}\right)^4,
\end{align*}
and for $p \nmid 2$,
\begin{multline*}
Z_{1,p}(\alpha,\beta) 
:= \left( 1- \frac{1}{p^{2\alpha}}\right)  \left( 1- \frac{1}{p^{2\beta}}\right)  \left( 1- \frac{1}{p^{\alpha+\beta}}\right)^4
\Bigg[ 
1 + \frac{4}{p^{\alpha+\beta}} + \frac{1}{p^{2\alpha}} + \frac{1}{p^{2\beta}} + \frac{1}{p^{2\alpha+2\beta}}
  - \frac{1}{p+1} \\
\times\left( 
\frac{3}{p^{2\alpha}} 
+ \frac{3}{p^{2\beta}} + \frac{4}{p^{\alpha+\beta}}  -  \frac{1}{p^{4\alpha}} -  \frac{1}{p^{4\beta}} - \frac{3}{p^{2\alpha+2\beta}} + \frac{2}{p^{2\alpha+4\beta}} + \frac{2}{p^{4\alpha+2\beta}}  - \frac{1}{p^{4\alpha+4\beta}}
\right)
\Bigg] .
\end{multline*}

Furthermore, $Z_1(\alpha,\beta)$ is analytic and uniformly bounded in the region $\Re (\alpha),\Re (\beta) \geq \frac{1}{4}+\varepsilon$.
\label{lem:Z_1}
\end{lem}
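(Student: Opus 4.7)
The plan is to derive \eqref{equ:Z} by direct Euler product manipulation and then verify convergence of the remainder in the claimed half-plane. First, I observe that the summand $\tau(n_1)\tau(n_2)\mathbf{1}[n_1n_2 = \Box]\prod_{p \mid n_1 n_2}\tfrac{p}{p+1}$ is jointly multiplicative in $(n_1,n_2)$: the divisor factors split over coprime prime supports, the factor $\prod_{p \mid n_1 n_2}\tfrac{p}{p+1}$ does as well, and the squareness condition is equivalent to the local condition $v_p(n_1)+v_p(n_2) \in 2\mathbb{Z}$ at every prime. The sum therefore factors as an Euler product $\prod_p L_p(\alpha,\beta)$, which converges absolutely for $\Re(\alpha), \Re(\beta)$ sufficiently large. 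The coprimality $(n_1 n_2, 2) = 1$ forces the $p = 2$ local factor to be $1$.

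For odd $p$, set $x = p^{-\alpha}$ and $y = p^{-\beta}$. Using $\tau(p^i) = i+1$ and the even-part identity $\mathbf{1}[i+j \text{ even}] = \tfrac{1}{2}(1 + (-1)^{i+j})$ applied to $\sum_{i,j\ge 0}(i+1)(j+1)x^iy^j = (1-x)^{-2}(1-y)^{-2}$, I obtain
\[
L_p(\alpha,\beta) = \frac{1}{p+1} + \frac{p}{p+1}\cdot\frac{1}{2}\!\left[\frac{1}{(1-x)^2(1-y)^2} + \frac{1}{(1+x)^2(1+y)^2}\right].
\]
Combining the two fractions over $(1-x^2)^2(1-y^2)^2$ and using the elementary identity $(1+x)^2(1+y)^2 + (1-x)^2(1-y)^2 = 2(1 + x^2 + y^2 + 4xy + x^2y^2)$ gives
\[
L_p(\alpha,\beta) = \frac{(1-x^2)^2(1-y^2)^2 + p\bigl(1 + x^2 + y^2 + 4xy + x^2y^2\bigr)}{(p+1)(1-x^2)^2(1-y^2)^2}.
\]
Since the local Euler factor of $\zeta(2\alpha)^3\zeta(2\beta)^3\zeta(\alpha+\beta)^4$ at odd $p$ is $(1-x^2)^{-3}(1-y^2)^{-3}(1-xy)^{-4}$, I set $Z_{1,p}(\alpha,\beta) := L_p(\alpha,\beta) \cdot (1-x^2)^3(1-y^2)^3(1-xy)^4$ and pull out the common factor $(1-x^2)(1-y^2)(1-xy)^4$. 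Writing the quotient as $1 + 4xy + x^2 + y^2 + x^2y^2$ plus $-\tfrac{1}{p+1}$ times the polynomial that remains after subtracting $(1-x^2)^2(1-y^2)^2$ matches the stated formula termwise; the $p = 2$ case is immediate from $L_2 = 1$.

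For analyticity and boundedness in $\Re(\alpha), \Re(\beta) \geq \tfrac{1}{4} + \varepsilon$, I verify that $Z_{1,p}(\alpha,\beta) = 1 + O(p^{-1-\delta})$ uniformly, for some $\delta = \delta(\varepsilon) > 0$. The key observation is that the degree-$2$ parts (in $x,y$) of both $L_p$ and of $(1-x^2)^{-3}(1-y^2)^{-3}(1-xy)^{-4}$ agree: both equal $1 + 3x^2 + 3y^2 + 4xy + \cdots$. This cancellation drops the leading term of $Z_{1,p} - 1$, which is therefore controlled by a finite sum of terms of size $p^{-1-2\Re\alpha}$, $p^{-1-2\Re\beta}$, $p^{-1-\Re\alpha-\Re\beta}$ (from the $-\tfrac{1}{p+1}$ correction in $L_p$), and $p^{-4\Re\alpha}$, $p^{-4\Re\beta}$, $p^{-2\Re\alpha-2\Re\beta}$ (from the mismatch at degrees $\ge 4$). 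Each of these is $\ll p^{-1-\delta}$ in the claimed region, so $\sum_p |Z_{1,p}(\alpha,\beta) - 1|$ converges uniformly on compact subsets, and the Euler product defines an analytic, uniformly bounded function there.

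The main obstacle is the tedious but purely mechanical bookkeeping required to match the explicit polynomial in the bracket of $Z_{1,p}$. Conceptually, the only nontrivial point is the degree-$2$ cancellation that extends convergence from the naive region $\Re\alpha,\Re\beta > \tfrac{1}{2}$ down to $\Re\alpha,\Re\beta > \tfrac{1}{4}$, and this is essentially forced by how the zeta factors were chosen.
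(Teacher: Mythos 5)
Your argument is correct and follows essentially the same route as the paper: factor the sum as an Euler product over odd primes, compute the local factor (your $\tfrac{1}{2}(1+(-1)^{i+j})$ symmetrization is a clean way to organize the computation the paper simply asserts), match Euler factors against $\zeta(2\alpha)^3\zeta(2\beta)^3\zeta(\alpha+\beta)^4$ to read off $Z_{1,p}$, and observe the degree-$2$ cancellation that makes $Z_{1,p}-1 = O(p^{-1-\delta})$ on $\Re\alpha,\Re\beta \geq \tfrac14+\varepsilon$. The paper leaves both the local-sum identity and the convergence check as routine; you fill them in, but the underlying argument is identical.
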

\begin{proof}
We have
\begin{equation*}
\begin{split}
\sum_{\substack{(n_1n_2,2)=1\\ n_1n_2 = \Box}}\frac{\tau(n_1)\tau(n_2)}{n_1^{\alpha}n_2^{\beta}} \prod_{p|n_1n_2} \frac{p}{p+1}= \prod_{(p,2)=1}\left( 1 + \frac{p}{p+1}\left( \sum_{r=1}^{\infty}\sum_{n_1 n_2=p^{2r}}\frac{\tau(n_1)\tau(n_2)}{n_1^{\alpha}n_2^{\beta}}\right)\right).
\end{split}
\end{equation*}
Note that
\begin{align*}
\sum_{r=1}^{\infty}\sum_{n_1 n_2=p^{2r}}\frac{\tau(n_1)\tau(n_2)}{n_1^{\alpha}n_2^{\beta}}
= \frac{(1+\frac{1}{p^{2\alpha}})(1+\frac{1}{p^{2\beta}})}{(1-\frac{1}{p^{2\alpha}})^2(1-\frac{1}{p^{2\beta}})^2}  + \frac{4}{p^{\alpha+\beta}} \frac{1}{(1-\frac{1}{p^{2\alpha}})^2(1-\frac{1}{p^{2\beta}})^2} -1.
\end{align*}
Thus,
\begin{multline*}
\sum_{\substack{(n_1n_2,2)=1\\ n_1n_2 = \Box}}\frac{\tau(n_1)\tau(n_2)}{n_1^{\alpha}n_2^{\beta}} \prod_{p|n_1n_2} \frac{p}{p+1}
= \prod_{(p,2)=1} \frac{1}{(1-\frac{1}{p^{2\alpha}})^2(1-\frac{1}{p^{2\beta}})^2}
\Bigg[ 
1 + \frac{4}{p^{\alpha+\beta}} + \frac{1}{p^{2\alpha}} + \frac{1}{p^{2\beta}} + \frac{1}{p^{2\alpha+2\beta}}\\
- \frac{1}{p+1} 
\left( 
\frac{3}{p^{2\alpha}} 
+ \frac{3}{p^{2\beta}} + \frac{4}{p^{\alpha+\beta}} 
-  \frac{1}{p^{4\alpha}}
 -  \frac{1}{p^{4\beta}}  - \frac{3}{p^{2\alpha+2\beta}} + \frac{2}{p^{2\alpha+4\beta}} + \frac{2}{p^{4\alpha+2\beta}}  - \frac{1}{p^{4\alpha+4\beta}}
\right)
\Bigg].
\end{multline*}
Then (\ref{equ:Z}) follows  by comparing Euler factors on  both  sides. The remaining part of the lemma   follows directly from  the  definition of $Z_1(\alpha,\beta)$.
\end{proof}
It follows from  (\ref{equ:14.2}) and    Lemma \ref{lem:Z_1} that
\begin{multline}
\label{equ:14.2+} 
S_1(k=0) 
=\frac{16X}{\pi^2}\int_{-\infty}^{\infty} \Phi\left( x \right) dx \frac{1}{(2\pi i)^2}\int_{(1)}\int_{(1)}\frac{g(u)g(v)}{uv}U_1^u U_2^v \zeta (1+2u)^3 \zeta (1+2v)^3 \zeta (1+u+v) ^4\\
\times Z_1(\tfrac{1}{2}+u,\tfrac{1}{2}+v) \ du\ dv 
+O \left(XY^{-1}\log^{11} X\right).
\end{multline}
The double integral in (\ref{equ:14.2+}) can be written as  
\begin{align*}
\frac{1}{(2\pi i)^2}\int_{(1)}\int_{(1)}  \frac{U_1^u U_2^v}{uv(2u)^3(2v)^3(u+v)^4} \mathcal{E}(u,v)  \ du\ dv ,
\end{align*}
where 
\[
\mathcal{E}(u,v) := g(u)g(v)  \zeta(1+2u)^3 (2u)^3   \zeta(1+2v)^3 (2v)^3  \zeta(1+u+v)^4 (u+v)^4  Z_1(\tfrac{1}{2}+u,\tfrac{1}{2}+v).
\]
Clearly, $\mathcal{E}$ is analytic for $\Re(u),\Re(v)\geq  -\frac{1}{4} + \varepsilon$.

Now move the lines of the integral above to $\Re(u) = \Re(v) = \frac{1}{10}$ without encountering any poles. Next move the line of the integral over $v$ to $\Re(v) = -\frac{1}{5}$. We  may encounter two poles of   order at most $4$ at both $v=0$ and $v=-u$. Thus,
\begin{align}
&\frac{1}{(2\pi i)^2}\int_{(\frac{1}{10})}\int_{(\frac{1}{10})}  \frac{U_1^u U_2^v}{uv(2u)^3(2v)^3(u+v)^4} \mathcal{E}(u,v) \ du\ dv  \nonumber\\
&= \frac{1}{2\pi i}\int_{(\frac{1}{10})} \left( \underset{v=0}{\operatorname{Res}} +\underset{v=-u}{\operatorname{Res}}\right)
 \left[\frac{U_1^u U_2^v}{uv(2u)^3(2v)^3(u+v)^4}\mathcal{E}(u,v)  \right]du + O \left(U_1^{\frac{1}{10}} U_2^{-\frac{1}{5}} \right).
 \label{equ:14.3}
\end{align}

The integral of the residue at $v=-u$ in (\ref{equ:14.3}) will  contribute to an error term. In fact, we have 
\begin{align*}
&\underset{v=-u}{\operatorname{Res}}
 \left[\frac{U_1^u U_2^v}{uv(2u)^3(2v)^3(u+v)^4}\mathcal{E}(u,v)  \right]\\
& = \frac{1}{3!}\left. \frac{\partial^3}{\partial v^3} \right|_{v=-u} \left[\frac{U_1^u U_2^v}{uv(2u)^3(2v)^3}\mathcal{E}(u,v)  \right] \\
 &= \frac{U_1^u U_2^{-u} }{384 u^{11} } 
  \Big [
    \mathcal{E}(u, -u) \left( u^3 \log^3 U_2 + 12 u^2 \log^2 U_2 + 60 u \log U_2 + 120\right) \\
   &\quad + 
    \mathcal{E}^{(0,1)}(u, -u) \left(  3u^3 \log^2 U_2  + 24 u^2 \ln U_2 + 60 u\right)  \\
   &\quad +
   \mathcal{E}^{(0,2)}(u, -u) \left( 3u^3 \log U_2 + 12 u^2\right) 
   +
   \mathcal{E}^{(0,3)}(u, -u)u^3 
 \Big ],
\end{align*}
where $ \mathcal{E}^{(i,j)}(u, v) := \frac{\partial^{i+j}\mathcal{E}}{\partial u^i \partial v^j} (u,v)$.
It follows that
\begin{align}
\frac{1}{2\pi i}\int_{(\frac{1}{10})} \underset{v=-u}{\operatorname{Res}}
 \left[\frac{U_1^u U_2^v}{uv(2u)^3(2v)^3(u+v)^4}\mathcal{E}(u,v)  \right]du \ll U_1^{\frac{1}{10}} U_2^{-\frac{1}{10}} \log^3 X. 
 \label{equ:k=0 residue-error}
\end{align}

It remains to  consider the integral of the residue at $v=0$ in (\ref{equ:14.3}).   Note that
\begin{align*}
I_1(u)&:=\underset{v=0}{\operatorname{Res}} \left[\frac{U_1^u U_2^v}{uv(2u)^3(2v)^3(u+v)^4}\mathcal{E}(u,v)  \right]\\
&= \frac{U_1^u}{384u^{11}} 
\Big[ 
\mathcal{E}(u,0) (u^3 \log^3 U_2 -12 u^2 \log^2 U_2 + 60 u \log U_2 -120 ) \\
& \quad +
\mathcal{E}^{(0,1)}(u,0)(3u^3 \log^2 U_2 -24 u^2  \log U_2 + 60 u )\\
&\quad + 
\mathcal{E}^{(0,2)}(u,0)(3u^3 \log U_2 - 12u^2) + \mathcal{E}^{(0,3)}(u,0) u^3
\Big] .
\end{align*}
Moving the line of the  integral below
 from $\Re(u)= \frac{1}{10}$ to $\Re(u)= -\frac{1}{10}$  with encountering a pole at $u=0$, we see that
\begin{align}
\frac{1}{2\pi i}\int_{(\frac{1}{10})} I_1(u) du 
&=
 \underset{u=0}{\operatorname{Res}} \ I_1(u)
  +O(U_1^{-\frac{1}{10}}\log^3 X) \nonumber\\
&= \frac{\mathcal{E}(0,0)}{11612160} \left(-\log^{10} U_1 + 5 \log^9 U_1 \log U_2 - 9\log^8 U_1 \log^2 U_2
+ 6 \log^7 U_1 \log^3 U_2
 \right) \nonumber\\
 &\quad +O\left(\log^9 X \right)+O \left(U_1^{-\frac{1}{10}}\log^3 X \right).
\label{equ:14.4}
\end{align}

Combining (\ref{equ:14.2+}), (\ref{equ:14.3}), (\ref{equ:k=0 residue-error}) and (\ref{equ:14.4}), we obtain that
\begin{multline}
S_1(k=0)\\
=\frac{16X}{\pi^2}\tilde{\Phi}(1) \cdot  \frac{\mathcal{E}(0,0)}{11612160} \left(-\log^{10} U_1 + 5 \log^9 U_1 \log U_2 - 9\log^8 U_1 \log^2 U_2
+ 6 \log^7 U_1 \log^3 U_2
 \right) \\ 
 + O\left( X \log^9 X + XY^{-1} \log^{11} X \right).
 \label{equ:revise-1}
\end{multline}
where $\tilde{\Phi}(s)$ is  defined in (\ref{def tilde f}).

Now we compute $\mathcal{E}(0,0) $ above. Clearly, $\mathcal{E}(0,0) = Z_1(\frac{1}{2},\frac{1}{2}) $. By the definition of $Z_1(u,v)$ in Lemma \ref{lem:Z_1}, it follows that
%$
%\mathcal{E}(0,0) = Z_1(\frac{1}{2},\frac{1}{2}) = 4 a_4.
%$
%In fact, we have 
\begin{align}
Z_1(\tfrac{1}{2},\tfrac{1}{2}) &= \frac{1}{2^{10}} \prod_{(p,2)=1} \left(1- \frac{1}{p} \right)^6 \left[ 1 + \frac{6}{p} + \frac{1}{p^2} - \frac{1}{p+1} \left( \frac{10}{p} - \frac{5}{p^2} + \frac{4}{p^3}  - \frac{1}{p^4}\right)\right] \nonumber\\
&= \frac{1}{2^{10}} \prod_{(p,2)=1} \frac{(1- \frac{1}{p} )^6 }{1+ \frac{1}{p}} \left( 1 + \frac{7}{p} - \frac{3}{p^2} + \frac{6}{p^3} - \frac{4}{p^4}  + \frac{1}{p^5}\right).
\label{equ:Z-1-value}
\end{align}
On the other hand, recalling the definition of $a_4$ from (\ref{equ:a-k}), we have
\begin{align}
a_4 &=  \frac{1}{2^{12}} 
\prod_{(p,2)=1} \frac{(1-\frac{1}{p})^{10}}{1+\frac{1}{p}} 
\left( 
\frac{
 (1+\frac{1}{\sqrt{p}} )^{-4} +
 (1-\frac{1}{\sqrt{p}} )^{-4}
 }{2}
 +\frac{1}{p}
\right)\nonumber\\
&=\frac{1}{2^{12}} 
\prod_{(p,2)=1} \frac{(1-\frac{1}{p})^{10}}{1+\frac{1}{p}}  \frac{1}{(1+\frac{1}{\sqrt{p}})^4 (1-\frac{1}{\sqrt{p}})^4}
\left[
\frac{1}{2} \left (1-\frac{1}{\sqrt{p}} \right)^4 + \frac{1}{2 } \left(1+\frac{1}{\sqrt{p}} \right)^4 + \frac{1}{p} \left( 1 -  \frac{1}{p}\right)^4 
\right]\nonumber\\
&= \frac{1}{2^{12}} 
\prod_{(p,2)=1} \frac{(1-\frac{1}{p})^{6}}{1+\frac{1}{p}}  
\left(
1 + \frac{7}{p} - \frac{3}{p^2} + \frac{6}{p^3} - \frac{4}{p^4}  + \frac{1}{p^5}
\right).
\label{equ:a-4-value}
\end{align}
Comparing (\ref{equ:Z-1-value}) with (\ref{equ:a-4-value}), we  conclude $ Z_1(\frac{1}{2},\frac{1}{2}) = 4 a_4$, which implies 
$\mathcal{E}(0,0) = 4 a_4$.
Together with  (\ref{equ:revise-1}), it follows that 
\begin{lem} \label{lem:k=0}
We have
\begin{multline*}
S_1(k=0)
= \frac{a_{4} \tilde{\Phi}(1)  X}{2^6 \cdot 3^4 \cdot 5 \cdot 7 \cdot \pi^2} \left(-\log^{10} U_1 + 5 \log^9 U_1 \log U_2 - 9\log^8 U_1 \log^2 U_2
+ 6 \log^7 U_1 \log^3 U_2
 \right) \\
 +O\left( X \log^9 X + XY^{-1} \log^{11} X \right).
\end{multline*}
\end{lem}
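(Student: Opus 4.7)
\textbf{Proof plan for Lemma~\ref{lem:k=0}.}
The plan is to reduce $S_1(k=0)$ to a double Mellin--Barnes contour integral and evaluate it by shifting contours to extract residues at $v=0$ and $u=0$. First, by Lemma~\ref{lem:2.2} we have $G_0(n)=\phi(n)$ when $n$ is a perfect square and $0$ otherwise, so in (\ref{equ:13.a}) the $k=0$ contribution is supported on $n_1n_2=\Box$. Executing the M\"obius sum via
\[
\sum_{\substack{(a,2n_1n_2)=1}} \frac{\mu(a)}{a^2} = \frac{8}{\pi^2}\prod_{\substack{p\mid n_1n_2}}\Bigl(1-\tfrac{1}{p^2}\Bigr)^{-1},
\]
combined with the $\phi(n_1n_2)/(n_1n_2)$ factor, produces the Euler-factor combination $\prod_{p\mid n_1n_2} p/(p+1)$. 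Completing the $a$-sum above from $a\le Y$ introduces an $O(Y^{-1})$ relative error, and the rapid decay of $\omega$ restricts the divisor sum to $n_1n_2\le U_1U_2$, which gives the stated $O(XY^{-1}\log^{11}X)$ remainder.

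Next I would substitute the Mellin representation (\ref{equ:def-w}) of $\omega$ on the lines $\Re(u)=\Re(v)=1$ to separate the $n_1$ and $n_2$ sums, and apply Lemma~\ref{lem:Z_1} with $\alpha=\tfrac12+u$, $\beta=\tfrac12+v$ to recognise the Dirichlet series as $\zeta(2\alpha)^3\zeta(2\beta)^3\zeta(\alpha+\beta)^4 Z_1(\alpha,\beta)$. Absorbing the factors $(2u)^3,(2v)^3,(u+v)^4$ that cancel the polar parts of the zeta functions into a holomorphic function $\mathcal{E}(u,v)$, the integrand takes the form
\[
\frac{U_1^u U_2^v}{uv\,(2u)^3(2v)^3(u+v)^4}\,\mathcal{E}(u,v).
\]
I then shift both contours to $\Re=\tfrac{1}{10}$ without crossing any poles, and push the $v$-contour further to $\Re(v)=-\tfrac{1}{5}$, collecting poles of order at most $4$ at $v=0$ and $v=-u$. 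The residue at $v=-u$ produces $U_1^u U_2^{-u}$ times a cubic polynomial in $\log U_2$; since $U_1\le U_2$, moving the resulting $u$-contour to the right bounds this contribution by $U_1^{1/10}U_2^{-1/10}\log^3 X$, which is absorbed into the error.

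The residue at $v=0$ defines a function $I_1(u)$ with a pole of order $11$ at $u=0$. Shifting the $u$-contour from $\Re(u)=\tfrac{1}{10}$ to $\Re(u)=-\tfrac{1}{10}$ collects this pole and yields a polynomial of degree $10$ in $\log U_1$ and $\log U_2$, with remainder $O(\log^9 X)+O(U_1^{-1/10}\log^3 X)$. The top-order coefficient is $\mathcal{E}(0,0)/11612160$, and since $g(0)=1$ and each of $(2u)^3\zeta(1+2u)^3$, $(2v)^3\zeta(1+2v)^3$, $(u+v)^4\zeta(1+u+v)^4$ equals $1$ at the origin, $\mathcal{E}(0,0)=Z_1(\tfrac12,\tfrac12)$. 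A direct Euler-factor comparison against (\ref{equ:a-k}) then shows $Z_1(\tfrac12,\tfrac12)=4a_4$, after which the claimed constant emerges from $(16/\pi^2)\cdot 4a_4/11612160 = a_4/(2^6\cdot 3^4\cdot 5\cdot 7\cdot\pi^2)$, together with the factor $\tilde{\Phi}(1)$ arising from $\int_{-\infty}^{\infty}\Phi(x)\,dx$.

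The main technical obstacle is the extraction of the residue at the order-$11$ pole at $u=0$: one must Taylor-expand $U_1^u$ together with $\mathcal{E}^{(0,j)}(u,0)$ for $0\le j\le 3$ to order $10$ and carefully identify which monomials in $\log U_1$ and $\log U_2$ appear at the top total degree. The specific coefficients $-1,5,-9,6$ in the main polynomial are the outcome of this bookkeeping, and all monomials of total log-degree $\le 9$ can be absorbed into the $O(X\log^9 X)$ remainder; the analogous extraction at the order-$4$ pole $v=0$ that produces $I_1(u)$ is routine by comparison.
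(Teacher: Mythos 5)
Your proposal follows essentially the same route as the paper: same identification of the $k=0$ term via $G_0(n)=\phi(n)\mathbbm{1}_{n=\Box}$, same completion of the $a$-sum with $O(Y^{-1})$ error, same Mellin--Barnes double integral with Lemma~\ref{lem:Z_1}, same contour shifts ($\Re=\tfrac1{10}$, then $v\mapsto -\tfrac15$, then $u\mapsto-\tfrac1{10}$), same treatment of the residues at $v=-u$, $v=0$, $u=0$, and the same evaluation $\mathcal{E}(0,0)=Z_1(\tfrac12,\tfrac12)=4a_4$. The plan is correct and matches the paper's proof step for step.
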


\section{Evaluation of  $S_1(k=\Box)$} \label{sec:k=square}
In this section, we compute another part of the main term of $S_1$ which arises from $S_1(k=\square)$. Many of the techniques 
used here are from  Sections 5.2,   5.3 of \cite{Sound00}.

Recall  from (\ref{equ:13.a}) that
\begin{multline}
S_1(k \neq 0)=2X \sum_{\substack{a\leq Y \\ (a,2)=1}} \frac{\mu(a)}{a^2}\sum_{ k\neq 0} (-1)^k \sum_{(n_1,2a)=1} \sum_{(n_2,2a)=1} \frac{\tau(n_1)\tau(n_2)}{\sqrt{n_1n_2}}\frac{G_k(n_1n_2)}{n_1n_2}\\
\times \int_{-\infty}^{\infty} h(xX,n_1,n_2)(\cos  + \sin )\left( \frac{2\pi kxX}{2n_1n_2a^2}\right) dx.
\label{equ:15.1}
\end{multline}

To proceed, we need the following lemma.
\begin{lem}
Let $f(x)$ be a smooth function on $\mathbb{R}_{> 0}$. Suppose $f$ decays rapidly as $x \rightarrow \infty$, and $f^{(n)}(x)$ converges as $x \rightarrow 0^+$ for every $n\in \mathbb{Z}_{\geq 0}$.
Then we have 
\begin{equation}
\int_0^{\infty} f(x)\cos (2\pi xy) dx=\frac{1}{2\pi i} \int_{(\frac{1}{2})} \tilde{f}(1-s) \Gamma(s) \cos \left(\frac{\operatorname{sgn}(y)\pi s}{2}\right) (2\pi|y|)^{-s}ds,
\label{equ:cos-sin-2}
\end{equation}
where $\tilde{f}$ is the Mellin transform of $f$ defined  by
\begin{equation}
\tilde{f}(s) := \int_0^{\infty} f(x) x^{s-1} dx.
\label{def tilde f}
\end{equation}
In addition, the equation \eqref{equ:cos-sin-2} is also valid when  $\cos$ is replaced by $\sin$.
\label{lem:15.1}
\end{lem}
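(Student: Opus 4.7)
The plan is to derive the identity by computing the Mellin transform in $y$ of the left-hand side and comparing it with the classical Mellin--Barnes representation of the cosine integral. First I would reduce to $y>0$: the left-hand side of \eqref{equ:cos-sin-2} is an even function of $y$ because $\cos$ is even, and the right-hand side is also even since $\cos(\operatorname{sgn}(y)\pi s/2)=\cos(\pi s/2)$ independently of the sign of $y$. Thus it suffices to treat $y>0$. For the sine version, both sides are odd in $y$, so the same reduction applies and the factor $\operatorname{sgn}(y)$ on the right emerges precisely from that odd symmetry.

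Setting $F(y):=\int_0^\infty f(x)\cos(2\pi xy)\,dx$ for $y>0$ and taking its Mellin transform in $y$, I would interchange the order of integration (justified for $0<\Re(s)<1$ by Fubini, using boundedness of $f$ near $0$ and rapid decay at infinity) to obtain
\begin{equation*}
\tilde{F}(s)=\int_0^\infty f(x)\left(\int_0^\infty y^{s-1}\cos(2\pi xy)\,dy\right)dx.
\end{equation*}
After the substitution $u=2\pi xy$ the inner integral becomes $(2\pi x)^{-s}\int_0^\infty u^{s-1}\cos u\,du = (2\pi x)^{-s}\Gamma(s)\cos(\pi s/2)$, where the last equality is the standard evaluation valid on the strip $0<\Re(s)<1$. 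Consequently
\begin{equation*}
\tilde{F}(s)=(2\pi)^{-s}\Gamma(s)\cos(\pi s/2)\,\tilde{f}(1-s).
\end{equation*}

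Applying Mellin inversion on the line $\Re(s)=1/2$ then produces \eqref{equ:cos-sin-2} for $y>0$, and the general case follows from the symmetry reduction. The sine case is entirely analogous, using $\int_0^\infty u^{s-1}\sin u\,du=\Gamma(s)\sin(\pi s/2)$ in place of the cosine evaluation.

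The main technical point is justifying the Mellin inversion on the line $\Re(s)=1/2$, which requires decay of $\tilde{F}(s)$ as $|\Im(s)|\to\infty$. The factor $\Gamma(s)\cos(\pi s/2)$ alone has only polynomial decay in $|\Im(s)|$, but the hypotheses on $f$, namely that every derivative $f^{(n)}$ extends continuously to $0^+$ and that $f$ decays rapidly at infinity, permit arbitrarily many integrations by parts in the definition \eqref{def tilde f} of $\tilde{f}$, yielding $\tilde{f}(1-s)\ll_N (1+|s|)^{-N}$ for every $N$. This rapid decay legitimizes both the Fubini step and the inversion step, and is the key analytic input beyond the algebraic manipulation with the gamma factor.
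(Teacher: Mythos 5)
The paper does not actually supply a proof here; it cites \cite[Section~3.3]{Sound-Young}, where the identity is obtained essentially by inserting the Mellin--Barnes representation
$\cos(2\pi u)=\frac{1}{2\pi i}\int_{(c)}\Gamma(s)\cos(\pi s/2)(2\pi u)^{-s}\,ds$
with $u=x|y|$ into the left side and recognizing the resulting $x$-integral as $\tilde f(1-s)$. Your route is the dual one: you take the Mellin transform in $y$ of the left-hand side, evaluate the resulting inner integral $\int_0^\infty y^{s-1}\cos(2\pi xy)\,dy=(2\pi x)^{-s}\Gamma(s)\cos(\pi s/2)$, and then invert. The reduction to $y>0$ by parity, the emergence of $\operatorname{sgn}(y)$ in the sine case, and the remark that repeated integration by parts in \eqref{def tilde f} gives $\tilde f(1-s)\ll_N(1+|s|)^{-N}$ (so that Mellin inversion on $\Re(s)=\tfrac12$ converges) are all correct and are the right things to check.

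There is, however, one genuine gap: the claim that the interchange of the $x$- and $y$-integrations is ``justified for $0<\Re(s)<1$ by Fubini, using boundedness of $f$ near $0$ and rapid decay at infinity.'' Those hypotheses do not yield absolute convergence of the double integral. Indeed
\begin{equation*}
\int_0^\infty\!\!\int_0^\infty \bigl|y^{s-1}f(x)\cos(2\pi xy)\bigr|\,dx\,dy
\ \le\ \Bigl(\int_0^\infty |f|\Bigr)\int_0^\infty y^{\Re(s)-1}\,dy,
\end{equation*}
and the $y$-integral diverges at both endpoints; replacing $|\cos|$ by anything smaller does not help since the inner $y$-integral $\int_0^\infty y^{s-1}\cos(2\pi xy)\,dy$ is itself only conditionally convergent. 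So Fubini does not apply as stated. The interchange is still true, but it needs a regularization step: for example, insert a factor $e^{-\epsilon y}$, apply Fubini to the now absolutely convergent double integral, evaluate $\int_0^\infty y^{s-1}e^{-\epsilon y}\cos(2\pi xy)\,dy$ in closed form, and pass to the limit $\epsilon\to0^+$ (dominated convergence is available on the $x$-side because $f$ decays rapidly and the regularized inner integral is uniformly $O((2\pi x)^{-\Re(s)})$). Alternatively, integrate by parts in $x$ once to write $F(y)=-\frac{1}{2\pi y}\int_0^\infty f'(x)\sin(2\pi xy)\,dx$, which makes the Mellin transform of $F$ absolutely convergent on $\Re(s)=\tfrac12$ and lets you repeat the computation on a sub-strip where things do converge absolutely, then continue analytically. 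Either patch closes the gap; as written, the Fubini appeal is the one step that does not hold up.
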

\begin{proof}
See   \cite[Section 3.3]{Sound-Young}.
\end{proof}
Taking $f(x) = h(xX, n_1, n_2)$ in Lemma \ref{lem:15.1}, we have 
\begin{align*}
& \int_{-\infty}^{\infty} h(xX,n_1,n_2) (\cos+\sin) \left( \frac{2\pi kxX}{2n_1n_2a^2}\right)dx 
 \\
& = \frac{X^{-1}}{2\pi i}\int_{(\frac{1}{2})}\tilde{h}(1-s;n_1,n_2)\Gamma(s)
 (\cos+\operatorname{sgn}(k)\sin)\left(\frac{\pi s}{2}\right)\left(\frac{n_1n_2a^2}{\pi |k|}\right)^{s}ds,
\end{align*}
where 
\[
\tilde{h}(1-s;n_1,n_2) =  \int_0^{\infty} h(x, n_1 , n_2)x^{-s} dx.
\]
Recall from (\ref{equ:def-h}) the definition of $h(x,y,z)$. The above contour integral is 
\begin{equation*}
\frac{1 }{(2\pi i)^3 }\int_{(\varepsilon)}\int_{(\varepsilon)}\int_{(\varepsilon)}
 \Gamma(s)\left(\frac{a^2}{ |k|}\right)^s \mathcal{J}(s,k) g(u)g(v) \frac{1}{n_1^{u-s} n_2^{v-s}}\frac{U_1^u U_2^vX^{-s}}{uv}\ ds \ du \ dv, 
\end{equation*}
where
\[
\mathcal{J}(s,k) =   \tilde{\Phi}(1-s) (\cos+\operatorname{sgn}(k)\sin)\left(\frac{\pi s}{2}\right) \pi^{-s}  .
\]
Move the lines of the triple integral to $\Re(s) = \frac{1}{2}+\varepsilon$, $\Re(u)=\Re(v)=\frac{1}{2}+2\varepsilon$, and change the variables $u'=u-s$, $v'=v-s$. We obtain that
\begin{align*}
&\int_{-\infty}^{\infty} h(xX,n_1,n_2) (\cos+\sin) \left( \frac{2\pi kxX}{2n_1n_2a^2}\right)dx
\\
&=\frac{1 }{(2\pi i)^3 }\int_{(\varepsilon)}\int_{(\varepsilon)}\int_{(\frac{1}{2}+\varepsilon)}
 \Gamma(s)\left(\frac{a^2}{ |k|}\right)^s \mathcal{J}(s,k) g(u+s)g(v+s) \frac{1}{n_1^{u} n_2^{v}}\frac{U_1^{u+s} U_2^{v+s} X^{-s}}{(u+s)(v+s)}\ ds \ du \ dv.
\end{align*}
Substituting this in (\ref{equ:15.1}), we get  that
\begin{multline}
S_1(k \neq 0)=2X \sum_{\substack{a\leq Y \\ (a,2)=1}} \frac{\mu(a)}{a^2}\sum_{ k\neq 0} (-1)^k 
\frac{1 }{(2\pi i)^3 }\int_{(\varepsilon)}\int_{(\varepsilon)}\int_{(\frac{1}{2}+\varepsilon)}
\Gamma(s)\left(\frac{a^2}{ |k|}\right)^s \mathcal{J}(s,k) g(u+s)g(v+s)\\
\times \frac{U_1^{u+s} U_2^{v+s} X^{-s}}{(u+s)(v+s)}
 \sum_{(n_1,2a)=1} \sum_{(n_2,2a)=1} \frac{\tau(n_1)\tau(n_2)}{n_1^{\frac{1}{2}+u}n_2^{\frac{1}{2}+v}}\frac{G_k(n_1n_2)}{n_1n_2}
 \ ds \ du \ dv.
  \label{equ:15.2}
\end{multline}
\begin{lem}
Write $4k = k_1k_2^2$, where $k_1$ is a fundamental discriminant
(possibly $k_1 = 1$), and $k_2$ is a positive integer.
In the region $\Re(\alpha),\Re(\beta)>\frac{1}{2}$, we have
\begin{align}
\sum_{(n_1,2a)=1}\sum_{(n_2,2a)=1} \frac{\tau(n_1)\tau(n_2)}{n_1^{\alpha}n_2^{\beta}}\frac{G_k(n_1n_2)}{n_1n_2} = L(\tfrac{1}{2}+\alpha,\chi_{k_1})^2  L(\tfrac{1}{2}+\beta,\chi_{k_1})^2  Z_2(\alpha,\beta,a,k).
\label{equ:Z_2}
\end{align}
Here $Z_2(\alpha,\beta,a,k)$ is defined as follows:
\begin{align*}
Z_2(\alpha,\beta,a,k):=\prod_{p}Z_{2,p}(\alpha,\beta,a,k),
\end{align*}
where 
\begin{align*}
Z_{2,p}(\alpha,\beta,a,k) := \left( 1-\frac{\chi_{k_1}(p)}{p^{\frac{1}{2}+\alpha}}\right)^2\left( 1-\frac{\chi_{k_1}(p)}{p^{\frac{1}{2}+\beta}}\right)^2  \qquad \text{if } p|2a,
\end{align*}
and 
\begin{align*}
Z_{2,p}(\alpha,\beta,a,k) := \left( 1-\frac{\chi_{k_1}(p)}{p^{\frac{1}{2}+\alpha}}\right)^2\left( 1-\frac{\chi_{k_1}(p)}{p^{\frac{1}{2}+\beta}}\right)^2 \sum_{n_1=0}^{\infty} \sum_{n_2=0}^{\infty} \frac{\tau(p^{n_1})\tau(p^{n_2})}{p^{n_1\alpha + n_2\beta}}\frac{G_k(p^{n_1+n_2})}{p^{n_1+n_2}} \qquad \text{if } p\nmid 2a.
\end{align*}

In addition, $Z_2(\alpha,\beta,a,k)$ is analytic in the region $\Re(\alpha),\Re(\beta)>0$, and we have
\begin{equation}
Z_2(\alpha,\beta,a,k) \ll \tau^4(a) \tau^{8}(|k|) \log^{10} X
\label{equ:15.111111}
\end{equation}
in the region $\Re(\alpha),\Re(\beta) \geq \frac{1}{\log X}$,
where the implied constant is absolute.
\label{lem:15.2}
\end{lem}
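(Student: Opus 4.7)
The plan is to exploit the joint multiplicativity of $\tau$ and $G_k$ to factor the double Dirichlet series on the left of \eqref{equ:Z_2} as an Euler product, and then normalize by the Euler product of $L(\tfrac{1}{2}+\alpha,\chi_{k_1})^2 L(\tfrac{1}{2}+\beta,\chi_{k_1})^2$ so that the remaining factor $Z_2$ is close to $1$ at each prime. Since $\tau$ is multiplicative, $G_k$ is multiplicative on coprime odd arguments by Lemma \ref{lem:2.2}, and the coprimality conditions $(n_i,2a)=1$ factor over primes, the sum splits as a product of local factors. For $p\mid 2a$ only the term $n_1=n_2=0$ contributes, giving $1$, whereas for $p\nmid 2a$ one gets the inner sum appearing in the statement. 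Dividing by $\prod_p(1-\chi_{k_1}(p)p^{-1/2-\alpha})^{-2}(1-\chi_{k_1}(p)p^{-1/2-\beta})^{-2}$ produces \eqref{equ:Z_2} with $Z_{2,p}$ as stated.

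The second step is to establish analyticity on $\Re(\alpha),\Re(\beta)>0$ by showing $Z_{2,p}=1+O(p^{-1-\varepsilon})$ for $p\nmid 2ak$. For such primes, Lemma \ref{lem:2.2} (with $v_p(k)=0$) forces $G_k(p)=\chi_{k_1}(p)\sqrt{p}$ and $G_k(p^r)=0$ for $r\geq 2$, so the inner sum collapses to $1+2\chi_{k_1}(p)p^{-1/2-\alpha}+2\chi_{k_1}(p)p^{-1/2-\beta}$. Writing $x=\chi_{k_1}(p)p^{-1/2-\alpha}$ and $y=\chi_{k_1}(p)p^{-1/2-\beta}$, a direct expansion yields
\[
Z_{2,p}=(1+2x+2y)(1-x)^2(1-y)^2=1-3x^2-4xy-3y^2+O(p^{-3/2-3\min(\Re\alpha,\Re\beta)}),
\]
so the linear terms cancel and $\prod_{p\nmid 2ak} Z_{2,p}$ converges absolutely on $\Re(\alpha),\Re(\beta)>0$.

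For the bound \eqref{equ:15.111111} I would split the primes into three classes. At $p\mid 2a$, $Z_{2,p}$ is a polynomial of fixed degree in $p^{-1/2-\alpha},p^{-1/2-\beta}$ bounded by an absolute constant, and a standard divisor estimate gives $\prod_{p\mid 2a}|Z_{2,p}|\ll\tau^4(a)$. At $p\mid k$ with $v_p(k)=\alpha_p$, Lemma \ref{lem:2.2} truncates the inner sum to $n_1+n_2\leq\alpha_p+1$; on that range $|G_k(p^r)/p^r|\leq 1$ and $\tau(p^{n_i})\leq\alpha_p+1$, so $|Z_{2,p}|\ll(\alpha_p+1)^8$ and $\prod_{p\mid k}|Z_{2,p}|\ll\tau^8(|k|)$.

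The main obstacle will be the $(\log X)^{10}$ factor on the remaining tame product. From the expansion above one has, up to an absolutely convergent Euler product $W(\alpha,\beta)$ that is bounded on the region $\Re(\alpha),\Re(\beta)\geq 1/\log X$,
\[
\prod_{p\nmid 2ak} Z_{2,p}=\zeta(1+2\alpha)^{-3}\zeta(1+\alpha+\beta)^{-4}\zeta(1+2\beta)^{-3}\,W(\alpha,\beta),
\]
obtained by matching the leading $p^{-1-2\alpha}$, $p^{-1-\alpha-\beta}$, $p^{-1-2\beta}$ terms in $\log Z_{2,p}$. The classical bound $|\zeta(1+s)^{-1}|\ll\log(|\Im s|+2)$ on $\Re(s)\geq 0$ then shows each reciprocal $\zeta$-factor is $\ll\log X$ in the intended range (with imaginary parts at most $X$ in absolute value), yielding a total of $(\log X)^{3+4+3}=(\log X)^{10}$. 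Combining the three estimates yields \eqref{equ:15.111111}.
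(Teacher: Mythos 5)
Your proof follows the paper's argument very closely: the Euler-product factorization via joint multiplicativity, the identification of the local factor at $p\nmid 2ak$ as $(1-x)^2(1-y)^2(1+2x+2y)$ with the linear terms cancelling, and the three-way split of primes for the final bound are all exactly what the paper does.

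The one place where you deviate is the last step, where you factor $\prod_{p\nmid 2ak}Z_{2,p}$ exactly into $\zeta(1+2\alpha)^{-3}\zeta(1+\alpha+\beta)^{-4}\zeta(1+2\beta)^{-3}W(\alpha,\beta)$ and invoke the bound $|\zeta(1+s)^{-1}|\ll\log(|\Im s|+2)$. Two cautions here. First, the lemma imposes no restriction on $\Im(\alpha),\Im(\beta)$, so the parenthetical ``with imaginary parts at most $X$'' is an assumption you are adding; the clean way to get the $\log X$ per factor, independent of the imaginary parts, is the elementary bound $|\zeta(1+s)|^{-1}\le\prod_p(1+p^{-1-\Re s})\ll 1/\Re(s)\ll\log X$ for $\Re(s)\ge 1/\log X$, which is in effect what the paper uses when it bounds the Euler product over $p\nmid 2ak$ directly by $\prod_p\big(1+3p^{-1-2\Re\alpha}+3p^{-1-2\Re\beta}+4p^{-1-\Re\alpha-\Re\beta}+O(p^{-3/2})\big)$. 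Second, if your $W(\alpha,\beta)$ is to make the displayed identity an equality with full $\zeta$-functions, it must include the correction factors $\zeta_p(1+2\alpha)^3\zeta_p(1+\alpha+\beta)^4\zeta_p(1+2\beta)^3$ at the primes $p\mid 2ak$, and those are not bounded by an absolute constant uniformly in $a,k$ (their product can be as large as a power of $\log\log X$). The paper sidesteps this by bounding the incomplete product over $p\nmid 2ak$ from above by the full product, rather than writing an identity. Both of these are easy to repair and the overall structure of your proof is sound.
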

\begin{proof}
The formula (\ref{equ:Z_2}) follows from the joint multiplicativity of $G_k(n_1n_2)$ with variables $n_1$ and $n_2$. In fact,
\begin{align*}
\sum_{(n_1,2a)=1}\sum_{(n_2,2a)=1} \frac{\tau(n_1)\tau(n_2)}{n_1^{\alpha}n_2^{\beta}}\frac{G_k(n_1n_2)}{n_1n_2} 
&=  \prod_{(p,2a)=1}\sum_{n_1=0}^{\infty} \sum_{n_2=0}^{\infty} \frac{\tau(p^{n_1})\tau(p^{n_2})}{p^{n_1\alpha + n_2\beta}}\frac{G_k(p^{n_1+n_2})}{p^{n_1+n_2}} .
\end{align*}
Then we obtain (\ref{equ:Z_2}) by comparing Euler factors on both sides.

 For $p \nmid 2a k$, by Lemma \ref{lem:2.2}, we know that
\begin{align}
Z_{2,p}(\alpha,\beta,a,k) =  \left( 1-\frac{\chi_{k_1}(p)}{p^{\frac{1}{2}+\alpha}}\right)^2\left( 1-\frac{\chi_{k_1}(p)}{p^{\frac{1}{2}+\beta}}\right)^2  \left( 1+\frac{2\chi_{k_1}(p)}{p^{\frac{1}{2}+\alpha}}+\frac{2\chi_{k_1}(p)}{p^{\frac{1}{2}+\beta}} \right).
\label{equ:15.2+}
\end{align}
This shows that   $Z_2(\alpha,\beta,a,k)$ is analytic in the region $\Re(\alpha),\Re(\beta)>0$.

It remains to prove the upper bound of $Z_2(\alpha,\beta,a,k)$. It follows from (\ref{equ:15.2+}) that for 
$p \nmid 2a k$,
\[
\prod_{(p,2ak)=1} Z_{2,p}(\alpha,\beta,a,k) = \prod_{(p,2ak)=1} \left( 1- \frac{3}{p^{1+2\alpha}} - \frac{3}{p^{1+2\beta}} - \frac{4}{p^{1+\alpha+\beta}} + O \left( \frac{1}{p^{3/2}}\right) \right) \ll \log^{10} X.
\]
For $p|2a$, we get  that
\[
\prod_{p|2a} Z_{2,p}(\alpha,\beta,a,k)  \leq  \prod_{p|2a}  \left( 1+\frac{1}{\sqrt{p}}\right)^4 \ll \tau^4(a).
\]
For $p \nmid 2a,p|k$, using the trivial bound $G_k(p^n) \leq  p^n$, we obtain  that
\begin{align*}
\prod_{ p|k, p \nmid 2a} Z_{2,p}(\alpha,\beta,a,k) 
\leq  \prod_{ p|k, p \nmid 2a} \left( 1+ \frac{1}{\sqrt{p}}\right)^4 \sum_{0 \leq n_1 + n_2 \leq \operatorname{ord}_p (k) + 1 } 
(n_1 +1)(n_2+1) \ll \tau^{8}(|k|). 
\end{align*}
By the above three bounds, we have obtained  (\ref{equ:15.111111}).
\end{proof}
By (\ref{equ:15.2}) and Lemma \ref{lem:15.2}, it follows that
\begin{align}
&S_1(k \neq 0)\nonumber\\
&=\frac{2X }{(2\pi i)^3 } \sum_{\substack{a\leq Y \\ (a,2)=1}} \frac{\mu(a)}{a^2}\sum_{ k\neq 0} (-1)^k \int_{(\varepsilon)}\int_{(\varepsilon)}\int_{(\frac{1}{2}+\varepsilon)} \Gamma(s) \mathcal{J}(s,k) a^{2s} g(u+s)g(v+s) \frac{U_1^{u+s} U_2^{v+s}X^{-s}}{(u+s)(v+s)}\nonumber \\
& \quad\times \frac{1}{|k|^s}  L(1+u,\chi_{k_1})^2  L(1+v,\chi_{k_1})^2 Z_2(\tfrac{1}{2}+u,\tfrac{1}{2}+v,a,k)\ ds \ du \ dv.
\label{equ:15.3}
\end{align}
Note that when moving the lines of integration of the variables $u,v$ to the left, then we may encounter poles only when $k=\Box$ (then $k_1=1$). Thus, we break the sum in $(\ref{equ:15.3})$ into two parts depending on whether $k=\Box$. 

Write 
\begin{multline*}
S_1(k=\Box) 
 :=\frac{2X }{(2\pi i)^3 } \sum_{\substack{a\leq Y \\ (a,2)=1}} \frac{\mu(a)}{a^2}\sum_{\substack{ k \neq 0 \\ k=\Box} } (-1)^k \int_{(\varepsilon)}\int_{(\varepsilon)}\int_{(\frac{1}{2}+\varepsilon)} \Gamma(s) \mathcal{J}(s,k) a^{2s} g(u+s)g(v+s) \\
 \times \frac{U_1^{u+s} U_2^{v+s}X^{-s}}{(u+s)(v+s)}
 \frac{1}{|k|^s}  \zeta(1+u)^2  \zeta(1+v)^2 Z_2(\tfrac{1}{2}+u,\tfrac{1}{2}+v,a,k)\ ds \ du \ dv,
\end{multline*}
and 
\begin{multline}
S_1(k \neq \Box) 
  :=\frac{2X }{(2\pi i)^3 } \sum_{\substack{a\leq Y \\ (a,2)=1}} \frac{\mu(a)}{a^2}\sum_{ k\neq 0, \Box} (-1)^k \int_{(\varepsilon)}\int_{(\varepsilon)}\int_{(\tfrac{1}{2}+\varepsilon)} \Gamma(s) \mathcal{J}(s,k) a^{2s} g(u+s)g(v+s)\\
 \times  \frac{U_1^{u+s} U_2^{v+s}X^{-s}}{(u+s)(v+s)} 
 \frac{1}{|k|^s}  L(1+u,\chi_{k_1})^2  L(1+v,\chi_{k_1})^2 Z_2(\tfrac{1}{2}+u,\tfrac{1}{2}+v,a,k)\ ds \ du \ dv.
\label{equ:15.4}
\end{multline}

We will give an upper bound for $S_1(k \neq \Box)$ in the next section. In the rest of this section, we focus on  $S_1(k=\Box)$ and obtain a main term.
By the change of variables (replace $k$ by $k^2$), we get that
\begin{multline}
S_1(k=\Box) 
 =\frac{2X }{(2\pi i)^3 } \sum_{\substack{a\leq Y \\ (a,2)=1}} \frac{\mu(a)}{a^2}\sum_{k=1}^{
 \infty} (-1)^k \int_{(\varepsilon)}\int_{(\varepsilon)}\int_{(\tfrac{1}{2}+\varepsilon)} \Gamma(s) \mathcal{J}(s,1) a^{2s} g(u+s)g(v+s) \\
\times \frac{U_1^{u+s} U_2^{v+s}X^{-s}}{(u+s)(v+s)} 
 \frac{1}{k^{2s}}  \zeta(1+u)^2 \zeta(1+v)^2  Z_2(\tfrac{1}{2}+u,\tfrac{1}{2}+v,a,k^2)\ ds \ du \ dv.
\label{equ:15.5}
\end{multline}
\begin{lem} 
In the region $\Re(\alpha),\Re(\beta)>0$,  $\Re(\gamma)>\tfrac{1}{2}$,
\begin{align}
\sum_{k=1}^{\infty} \frac{(-1)^k}{k^{2\gamma}}Z_2(\alpha,\beta,a,k^2) = 
(2^{1-2 \gamma}-1)\zeta(2 \gamma)   Z_3(\alpha,\beta,\gamma,a).
\label{equ:Z_3}
\end{align}
Here $Z_3(\alpha,\beta,\gamma,a)$ is defined by
\[
Z_3(\alpha,\beta,\gamma,a) := \zeta(2 \alpha +2 \gamma)^2  \zeta(2 \beta +2 \gamma)^2 \prod_p Z_{3,p}(\alpha,\beta,\gamma,a),
\]
where for $p|2a$,
\begin{align}
Z_{3,p}(\alpha,\beta,\gamma,a): = \left( 1- \frac{1}{p^{\frac{1}{2}+\alpha}}\right)^2  \left( 1- \frac{1}{p^{\frac{1}{2}+\beta}}\right)^2  \left( 1- \frac{1}{p^{2\alpha + 2\gamma}}\right)^2  \left( 1- \frac{1}{p^{2\beta + 2\gamma}}\right)^2,
\label{equ:15.5+}
\end{align}
and for $p \nmid 2a$,
\begin{multline}
Z_{3,p}(\alpha,\beta,\gamma,a)
:= \left( 1- \frac{1}{p^{\frac{1}{2}+\alpha}}\right)^2  \left( 1- \frac{1}{p^{\frac{1}{2}+\beta}}\right)^2  \Bigg[ \left(1-\frac{1}{p}\right)\left(1+\frac{1}{p^{2\alpha+2\gamma}}\right)\left(1+\frac{1}{p^{2\beta+2\gamma}}\right) \\
+ \frac{1}{p}\left(1-\frac{1}{p^{2\alpha+2\gamma}}\right)^2\left(1-\frac{1}{p^{2\beta+2\gamma}}\right)^2 
+ \left(1-\frac{1}{p}\right)\frac{4}{p^{\alpha+\beta+2\gamma}}\\
+ 
2\left( 1-\frac{1}{p^{2\gamma}}\right)\left( \frac{1}{p^{\frac{1}{2}+\alpha}} + \frac{1}{p^{\frac{1}{2}+\beta}} + \frac{1}{p^{\frac{1}{2}+2\alpha+\beta + 2\gamma}}+ \frac{1}{p^{\frac{1}{2}+\alpha+2\beta + 2\gamma}}\right)\Bigg].
\label{equ:15.5++}
\end{multline}

Moreover,  
\begin{itemize}
\item[(1)] $Z_3(\alpha,\beta,\gamma,a)$  is analytic and uniformly bounded in the region $\Re(\alpha),\Re(\beta)\geq \tfrac{1}{2}+\varepsilon$, $\Re(\gamma)\geq 2\varepsilon$.

\item[(2)] $Z_3(\alpha,\beta,\gamma,a)$  is analytic  and $Z_3(\alpha,\beta,\gamma,a) \ll  \log^{14} X $ in the region $\Re(\alpha),\Re(\beta)\geq \tfrac{1}{2}+\frac{1}{\log X}$, $\Re(\gamma)\geq \frac{2}{\log X}$. The implied constant is absolute.
\end{itemize}
\label{lem:15.3}
\end{lem}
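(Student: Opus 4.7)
The plan is to prove the identity (\ref{equ:Z_3}) in the region $\Re(\gamma)>\tfrac{1}{2}$ (where absolute convergence holds) by recognizing both sides as Euler products in the $k$-variable and matching local factors; analytic continuation then delivers the statements (1) and (2). The guiding observation is that $(2^{1-2\gamma}-1)\zeta(2\gamma) = \sum_{k=1}^{\infty} (-1)^k/k^{2\gamma}$, and that for the square $k^2$ appearing in the LHS, the fundamental discriminant of $4k^2$ equals $1$, so all characters $\chi_{k_1}(p)$ appearing in Lemma \ref{lem:15.2} reduce to the trivial character. Consequently each Euler factor of $Z_2(\alpha,\beta,a,k^2)$ depends on $k$ only through $v_p(k)$, making $k \mapsto Z_2(\alpha,\beta,a,k^2)$ multiplicative.

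To factor the $k$-sum, I write $k = ed$ with $e$ supported on primes dividing $2a$ and $(d,2a)=1$; since $a$ is odd, $d$ is odd, and hence $(-1)^k = (-1)^e$. By Lemma \ref{lem:15.2} the Euler factors of $Z_2$ at primes $p\mid 2a$ are independent of $k$, so
\begin{align*}
\sum_{k=1}^{\infty}\frac{(-1)^k Z_2(\alpha,\beta,a,k^2)}{k^{2\gamma}}
= \prod_{p\mid 2a}(1-p^{-1/2-\alpha})^2(1-p^{-1/2-\beta})^2 \cdot E(\gamma,a) \cdot \prod_{p\nmid 2a} R_p,
\end{align*}
where $R_p := \sum_{n\ge 0} Z_{2,p}(\alpha,\beta,a,p^{2n})/p^{2n\gamma}$ and a direct evaluation of the $e$-sum yields $E(\gamma,a) = \frac{2^{1-2\gamma}-1}{1-2^{-2\gamma}} \prod_{p\mid a}(1-p^{-2\gamma})^{-1}$. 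Extracting a factor $(1-p^{-2\gamma})^{-1}$ from each $R_p$ then combines with $E(\gamma,a)$ to reconstitute $(2^{1-2\gamma}-1)\zeta(2\gamma)$.

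The core step is the local computation of $R_p$ for $p\nmid 2a$. Applying Lemma \ref{lem:2.2} with $v_p(k^2)=2n$ shows that $G_{p^{2n}}(p^j)$ vanishes except at $j=0$ (value $1$), at even $j=2m$ with $2\le 2m\le 2n$ (value $\phi(p^{2m})$), and at $j=2n+1$ (value $p^{2n+1/2}$, since the Kronecker symbol is trivial on a perfect square). With the generating function
\begin{align*}
A(x):=\sum_{j\ge 0} c_j x^j = \frac{1}{(1-xp^{-\alpha})^2(1-xp^{-\beta})^2},\qquad c_j := \sum_{n_1+n_2=j}(n_1+1)(n_2+1)p^{-n_1\alpha-n_2\beta},
\end{align*}
I can split $c_j$ into its even and odd parts via $\tfrac{1}{2}(A(x)\pm A(-x))$, then swap the order of summation in $n$ and $j$ to collapse the outer sum into geometric series in $p^{-2\gamma}$. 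The resulting closed form is then matched against $Z_{3,p}/[(1-p^{-2\gamma})(1-p^{-2\alpha-2\gamma})^2(1-p^{-2\beta-2\gamma})^2]$ with $Z_{3,p}$ given by (\ref{equ:15.5++}); the $p\mid 2a$ case is immediate from (\ref{equ:15.5+}).

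The main obstacle is the algebraic bookkeeping in the local identity for $p\nmid 2a$: the expression (\ref{equ:15.5++}) carries many terms, and the verification is delicate (the cleanest route is to multiply through by the poles on the LHS and simplify both sides as polynomials in $p^{-\alpha},p^{-\beta},p^{-\gamma}$). Once (\ref{equ:Z_3}) is established, for analyticity I expand $Z_{3,p}$ in powers of $1/p$ to show $Z_{3,p}=1+O(p^{-1-\delta})$ uniformly in $\Re(\alpha),\Re(\beta)\ge \tfrac{1}{2}+\varepsilon$, $\Re(\gamma)\ge 2\varepsilon$, so that $\prod_p Z_{3,p}$ converges absolutely; combined with the analyticity of $\zeta(2\alpha+2\gamma)^2\zeta(2\beta+2\gamma)^2$ in the same region this yields (1). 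For the estimate in (2), each of the four explicit zeta factors contributes $O(\log X)$ via the standard bound $|\zeta(1+\delta+it)|\ll \delta^{-1}$, and a refined expansion of $\prod_p Z_{3,p}$ captures subdominant poles (encoded in the $p^{-1-2\gamma}$ and $p^{-1-\alpha-\beta-2\gamma}$ terms of the bracket) which are bounded crudely by further factors $\log^{O(1)} X$, giving the $\log^{14} X$ estimate with room to spare.
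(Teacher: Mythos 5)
Your proposal is correct and follows essentially the same approach as the paper: recognize the $k$-sum as an Euler product in $k$ (using that, by Lemma~\ref{lem:15.2} and Lemma~\ref{lem:2.2}, $k\mapsto Z_{2,p}(\alpha,\beta,a,k^2)$ depends only on $v_p(k)$ and is constant for $p\mid 2a$), compute the local factor $R_p$ at $p\nmid 2a$ in closed form, and assemble. The one place you genuinely diverge in flavour is how the sign $(-1)^k$ is peeled off. You factor $k=ed$ with $e\mid(2a)^\infty$, $(d,2a)=1$, note $(-1)^k=(-1)^e$ since $d$ is odd, and evaluate the $e$-sum directly; your formula $E(\gamma,a)=\frac{2^{1-2\gamma}-1}{1-2^{-2\gamma}}\prod_{p\mid a}(1-p^{-2\gamma})^{-1}$ is correct and indeed recombines with the factors $(1-p^{-2\gamma})^{-1}$ pulled out of each $R_p$ to give $(2^{1-2\gamma}-1)\zeta(2\gamma)$. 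The paper instead first proves the unsigned identity $\sum_k Z_2(\alpha,\beta,a,k^2)k^{-2\gamma}=\zeta(2\gamma)Z_3(\alpha,\beta,\gamma,a)$, and then introduces $(-1)^k$ in one stroke via the observation $G_{4k}(n)=G_k(n)$ for odd $n$, hence $Z_2(\cdot,4k^2)=Z_2(\cdot,k^2)$, so $\sum(-1)^k=4^{-\gamma}\sum-\sum_{\mathrm{odd}}=(2^{1-2\gamma}-1)\sum$. The two are equivalent; the paper's trick is a little slicker, yours is a little more explicit, and both funnel into the same local computation of $R_p$, which is where all the actual work sits.

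One caution on part (2): your phrase \emph{``with room to spare''} is misleading, and the hand-waving hides a real cancellation. If you bound each term of the bracket in (\ref{equ:15.5++}) crudely by its modulus, you pick up contributions of size $\approx 8/p^{1+1/\log X}$ from the tail term $2(1-p^{-2\gamma})(p^{-1/2-\alpha}+p^{-1/2-\beta}+\cdots)$ and another $\approx 4/p^{1+1/\log X}$ from $(1-p^{-1/2-\alpha})^2(1-p^{-1/2-\beta})^2$, which would give $\prod_p|Z_{3,p}|\ll(\log X)^{18}$ and a final bound around $(\log X)^{22}$. The exponent $14$ requires expanding the product and observing that the $p^{-1/2-\alpha}$ and $p^{-1/2-\beta}$ terms \emph{cancel}, leaving
\[
Z_{3,p}=1+p^{-2\alpha-2\gamma}+p^{-2\beta-2\gamma}+4p^{-\alpha-\beta-2\gamma}-2p^{-\frac12-\alpha-2\gamma}-2p^{-\frac12-\beta-2\gamma}+O(p^{-2}),
\]
whence $|Z_{3,p}|\leq 1+6p^{-1-6/\log X}+4p^{-1-5/\log X}+O(p^{-2})$ and $\prod_p|Z_{3,p}|\ll(\log X)^{10}$; together with $(\log X)^4$ from the four zeta factors this gives exactly $\log^{14}X$. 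For the application in (\ref{equ:15.9}) any polynomial power of $\log X$ suffices (since $Y$ is a huge power of $\log X$), so your sketch would still carry the argument, but as a proof of the lemma as stated the cancellation needs to be made explicit.
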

\begin{proof}
We first compute the left-hand side of  (\ref{equ:Z_3}) without $(-1)^k$.  Note that
\begin{align}
\sum_{k=1}^{\infty} \frac{1}{k^{2\gamma}}Z_2(\alpha,\beta,a,k^2) &= \sum_{k=1}^{\infty}  \frac{1}{k^{2\gamma}}\prod_pZ_{2,p}(\alpha,\beta,a,k^2) = \prod_p \sum_{b=0}^{\infty} \frac{Z_{2,p}(\alpha,\beta,a,p^{2b})}{p^{2b\gamma}}.
\label{equ:without-1}
\end{align}
We remark here that $Z_{2,p} (\alpha,\beta,a,1)$ may not be $1$.
If $p|2a$,   we have
\begin{align}
\sum_{b=0}^{\infty} \frac{Z_{2,p}(\alpha,\beta,a,p^{2b})}{p^{2b\gamma}} 
= \frac{1}{1-\frac{1}{p^{2\gamma}}} \left( 1- \frac{1}{p^{\frac{1}{2}+\alpha}}\right)^2\left( 1- \frac{1}{p^{\frac{1}{2}+\beta}}\right)^2.
\label{equ:Z-2-1}
\end{align}
If $p \nmid 2a$, we have 
\begin{multline}
\sum_{b=0}^{\infty} \frac{Z_{2,p}(\alpha,\beta,a,p^{2b})}{p^{2b\gamma}}
= \left( 1- \frac{1}{p^{\frac{1}{2}+\alpha}}\right)^2\left( 1- \frac{1}{p^{\frac{1}{2}+\beta}}\right)^2 \\ 
\quad \times\sum_{b=0}^{\infty} \frac{1}{p^{2b\gamma}} 
\left(
\sum_{\substack{n_1,n_2 \geq 0 \\ n_1+n_2 = 2b+1}}  \frac{\tau(p^{n_1}) \tau(p^{n_2})}{p^{n_1\alpha+n_2\beta}} \frac{p^{2b}\sqrt{p}}{p^{n_1+n_2}} 
+  \sum_{\substack{n_1,n_2 \geq 0 \\ n_1+n_2 \leq 2b \\ n_1+n_2 \text{ even}}}  \frac{\tau(p^{n_1}) \tau(p^{n_2})}{p^{n_1\alpha+n_2\beta}} \frac{\phi(p^{n_1+n_2})}{p^{n_1+n_2}}
\right).
\label{equ:Z-2-1-1}
\end{multline}
Note that
\begin{align*}
&\sum_{b=0}^{\infty} \frac{1}{p^{2b\gamma}} \sum_{\substack{n_1,n_2 \geq 0 \\ n_1+n_2 = 2b+1}}  \frac{\tau(p^{n_1}) \tau(p^{n_2})}{p^{n_1\alpha+n_2\beta}} \frac{p^{2b}\sqrt{p}}{p^{n_1+n_2}} \\
&=\frac{1}{p^{-\gamma+\frac{1}{2}}}
\frac{1}{(1-\frac{1}{p^{2\alpha+2\gamma}})^2(1-\frac{1}{p^{2\beta+2\gamma}})^2} 
\left[
\frac{2}{p^{\alpha+\gamma}}\left( 1+ \frac{1}{p^{2\beta +2\gamma}}\right)
+ 
\frac{2}{p^{\beta+\gamma}}\left( 1+ \frac{1}{p^{2\alpha +2\gamma}}\right)
\right],
\end{align*}
and 
\begin{multline*}
\sum_{b=0}^{\infty} \frac{1}{p^{2b\gamma}} 
 \sum_{\substack{n_1,n_2 \geq 0 \\ n_1+n_2 \leq 2b \\ n_1+n_2 \text{ even}}}  \frac{\tau(p^{n_1}) \tau(p^{n_2})}{p^{n_1\alpha+n_2\beta}} \frac{\phi(p^{n_1+n_2})}{p^{n_1+n_2}} 
 = 
 \frac{1}{1-\frac{1}{p^{2\gamma}}} \\
\quad  \times  \left[
\frac{1}{p}+
\left(1-\frac{1}{p} \right)
\frac{1}{(1-\frac{1}{p^{2\alpha+2\gamma}})^2 (1-\frac{1}{p^{2\beta+2\gamma}})^2}  
\left(
 \left( 1+\frac{1}{p^{2\alpha+2\gamma}} \right) \left( 1+\frac{1}{p^{2\beta+2\gamma}} \right) +\frac{4}{p^{\alpha+\beta+2\gamma}}
 \right)
 \right].
\end{multline*}
Inserting them into (\ref{equ:Z-2-1-1}), combined  with (\ref{equ:without-1}), (\ref{equ:Z-2-1}),  we obtain that 
\begin{align}
\sum_{k=1}^{\infty} \frac{1}{k^{2\gamma}}Z_2(\alpha,\beta,a,k^2)
= \zeta(2 \gamma)   Z_3(\alpha,\beta,\gamma,a).
\label{equ:without -1}
\end{align}

Now we prove   (\ref{equ:Z_3}).  It is clear that $G_{4k}(n) = G_k(n)$ for any odd $n$, so $Z_2(\alpha,\beta,a,4k^2)=Z_2(\alpha,\beta,a,k^2)$. Thus,
\begin{align*}
\sum_{k=1}^{\infty} \frac{(-1)^k}{k^{2\gamma}}Z_2(\alpha,\beta,a,k^2) &= \frac{1}{4^\gamma }\sum_{k=1}^{\infty} \frac{1}{k^{2\gamma}}Z_2(\alpha,\beta,a,4k^2) - \sum_{k \text{ odd}} \frac{1}{k^{2\gamma}}Z_2(\alpha,\beta,a,k^2)\\
&= (2^{1-2 \gamma}-1)\sum_{k=1}^{\infty} \frac{1}{k^{2\gamma}}Z_2(\alpha,\beta,a,k^2).
\end{align*}
Together with (\ref{equ:without -1}), this yields  (\ref{equ:Z_3}).

The first property of $Z_3(\alpha,\beta,\gamma,a)$ comes directly from its definition. Now we prove  the second property. We know  that  for $\Re(\alpha),\Re(\beta)\geq \tfrac{1}{2}+\frac{1}{\log X}$,  $\Re(\gamma)\geq \frac{2}{\log X}$,
\[
Z_3(\alpha,\beta,\gamma,a) \ll  (\log^4 X) \prod_{p|2a} \left( 1+ \frac{1}{p^{1+\frac{1}{\log X}}} \right)^8
\prod_{p \nmid 2a} \left( 1+ \frac{6}{p^{1+\frac{6}{\log X}}} +  \frac{4}{p^{1+\frac{5}{\log X}}} + O\left( \frac{1}{p^2} \right)\right) \ll \log^{14} X,
\] 
as desired.
\end{proof}
It follows from (\ref{equ:15.5})    and Lemma \ref{lem:15.3}  that
\begin{multline*}
S_1(k=\Box) 
  =\frac{2X }{(2\pi i)^3 } \sum_{\substack{a\leq Y \\ (a,2)=1}} \frac{\mu(a)}{a^2}\int_{(\varepsilon)}\int_{(\varepsilon)}\int_{(\frac{1}{2}+\varepsilon)} \Gamma(s) \mathcal{J}(s,1) a^{2s} g(u+s)g(v+s) \frac{U_1^{u+s} U_2^{v+s} X^{-s}}{(u+s)(v+s)}   \\
  \times \zeta(1+u)^2 \zeta(1+v)^2  (2^{1-2s}-1)\zeta(2s)   Z_3(\tfrac{1}{2}+u,\tfrac{1}{2}+v, s,a)\ ds \ du \ dv.
\end{multline*}

Note that $Z_3(\tfrac{1}{2}+u,\tfrac{1}{2}+v, s)$ is analytic in the region $\Re(u),\Re(v) \geq \varepsilon$, $\Re(s) \geq 2\varepsilon$ by  (1) of Lemma \ref{lem:15.3}, so we move the lines of the integral above to $\Re(u) = \Re(v) = 1$, $\Re(s) =\frac{1}{10}$ without encountering any poles. (The only possible pole lies in $\zeta(2s)$ at $s=\frac{1}{2}$, but is cancelled by the simple zero arising from $2^{1-2s} -1 $.) Hence,
\begin{multline}
S_1(k=\Box) 
  =\frac{2X }{(2\pi i)^3 } \sum_{\substack{a\leq Y \\ (a,2)=1}} \frac{\mu(a)}{a^2}\int_{(1)}\int_{(1)}\int_{(\frac{1}{10})} \Gamma(s) \mathcal{J}(s,1) a^{2s} g(u+s)g(v+s) \frac{U_1^{u+s} U_2^{v+s} X^{-s}}{(u+s)(v+s)}   \\
 \times \zeta(1+u)^2 \zeta(1+v)^2  (2^{1-2s}-1)\zeta(2s)   Z_3(\tfrac{1}{2}+u,\tfrac{1}{2}+v, s,a)\ ds \ du \ dv.
\label{equ:15.9-}
\end{multline}

Note that  we may  extend the  sum  over $a$ to infinity with an error  term
\begin{align*}
&\frac{2X }{(2\pi i)^3 } \sum_{\substack{a > Y \\ (a,2)=1}} \frac{\mu(a)}{a^2}\int_{(1)}\int_{(1)}\int_{(\frac{1}{10})} \Gamma(s) \mathcal{J}(s,1) a^{2s} g(u+s)g(v+s) \frac{U_1^{u+s} U_2^{v+s}X^{-s}}{(u+s)(v+s)}  \nonumber \\
&\times  \zeta(1+u)^2 \zeta(1+v)^2  (2^{1-2s}-1)\zeta(2s)   Z_3(\tfrac{1}{2}+u,\tfrac{1}{2}+v, s,a)\ ds \ du \ dv.
\end{align*}
Move the lines of the integral above to $\Re(u)=\Re(v)=\frac{1}{\log X}$,  $\Re(s) = \frac{2}{\log X}$ without encountering any poles. Then by (2) of Lemma \ref{lem:15.3},  this is bounded by 
\begin{align*}
&\ll X \log^{20}X \sum_{\substack{a > Y \\ (a,2)=1}} \frac{1}{a^{2-\frac{4}{\log X}}} \\
& \quad \times \int_{(\frac{1}{\log X})}\int_{(\frac{1}{\log X})}\int_{(\frac{2}{\log X})}\
 (1+|2s|) |\mathcal{J}(s,1)|  \left|\Gamma(s)\Gamma(\tfrac{u+s}{2}+\tfrac{1}{4})^2  \Gamma(\tfrac{v+s}{2}+\tfrac{1}{4})^2 \right|\ |ds| \ |du| \ |dv| \\
&\ll X (\log^{20} X) Y^{-1} \int_{(\frac{2}{\log X})} (1+|2s|)  |\Gamma(s)| |\tilde{\Phi}(1-s)| \left|(\cos + \sin )(\tfrac{\pi s}{2})\right| |ds|\\
&\ll X  Y^{-1} (\log^{21} X) \Phi_{(5)}.
\end{align*}
The last inequality is due to (\ref{equ:bd for fei}) and the fact $| \Gamma(s) (\cos+\sin)(\frac{\pi s }{2}) |\ll |s|^{\Re(s)-\frac{1}{2}}$.
Together with (\ref{equ:15.9-}), it implies that
\begin{multline}
S_1(k=\Box)
  =\frac{2X }{(2\pi i)^3 } \sum_{(a,2)=1} \frac{\mu(a)}{a^2}\int_{(1)}\int_{(1)}\int_{(\frac{1}{10})} \Gamma(s) \mathcal{J}(s,1) a^{2s} g(u+s)g(v+s) \frac{U_1^{u+s} U_2^{v+s}X^{-s}}{(u+s)(v+s)}  \\
\quad \times  \zeta(1+u)^2 \zeta(1+v)^2  (2^{1-2s}-1)\zeta(2s)   Z_3(\tfrac{1}{2}+u,\tfrac{1}{2}+v, s,a)\ ds \ du \ dv \\
+ O \left(  X  Y^{-1} (\log^{21} X) \Phi_{(5)} \right) . 
\label{equ:15.9}
\end{multline}

Let $K_1(\alpha,\beta,\gamma;p), K_2(\alpha,\beta,\gamma;p)$ denote the expressions of (\ref{equ:15.5+}) and (\ref{equ:15.5++}), respectively. We have the following lemma.
\begin{lem} \label{lem:Z-4}
In the region $\Re(\alpha),\Re(\beta)>\tfrac{1}{2}$, $0 < \Re(\gamma) < \tfrac{1}{2} ,$
\begin{align}
\sum_{(a,2)=1} \frac{\mu(a)}{a^{2-2\gamma}} Z_3(\alpha,\beta, \gamma, a) = \frac{ \zeta(2\alpha+2\gamma)^3 \zeta(2\beta+2\gamma)^3 \zeta(\alpha+\beta+2\gamma)^4 }{\zeta(\frac{1}{2}+\alpha+2\gamma)^2 \zeta(\frac{1}{2}+\beta+2\gamma)^2 } Z_4(\alpha,\beta,\gamma),
\label{equ:Z-4}
\end{align}
where
\begin{align*}
&Z_4(\alpha,\beta,\gamma) 
:= K_1(\alpha,\beta,\gamma;2) \\
 &\quad \times \prod_p \frac{(1-\frac{1}{p^{2\alpha+2\gamma}})(1-\frac{1}{p^{2\beta+2\gamma}})(1-\frac{1}{p^{\alpha+\beta+2\gamma}})^4}
{ (1-\frac{1}{p^{\frac{1}{2}+\alpha+2\gamma}})^{2} (1-\frac{1}{p^{\frac{1}{2}+\beta+2\gamma}})^{2} } 
\prod_{(p,2)=1} \left( K_2(\alpha,\beta,\gamma;p)-\frac{1}{p^{2-2\gamma}}K_1(\alpha,\beta,\gamma;p) \right).
\end{align*}

Moreover, $Z_4(\alpha,\beta,\gamma)$ is analytic and uniformly bounded in the region $\Re(\alpha),\Re(\beta)\geq \frac{3}{8}$, $-\frac{1}{16}  \leq \Re(\gamma) \leq \frac{1}{8}$.
\label{lem:15.4}
\end{lem}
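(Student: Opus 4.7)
The plan is to convert the sum over $a$ into an Euler product over odd primes, and then reorganize it so that the stated zeta-function factors match the singular behavior at $\alpha, \beta, \gamma$ near $0$.

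First I would use the Euler product decomposition $Z_3(\alpha,\beta,\gamma,a) = \zeta(2\alpha+2\gamma)^2 \zeta(2\beta+2\gamma)^2 \prod_p Z_{3,p}$. Since $p=2$ automatically divides $2a$, its local factor is $K_1(\alpha,\beta,\gamma;2)$ regardless of $a$, and for odd primes $p$ the factor is $K_1(\alpha,\beta,\gamma;p)$ when $p\mid a$ and $K_2(\alpha,\beta,\gamma;p)$ when $p\nmid a$. Factoring out $\prod_{(p,2)=1} K_2(\alpha,\beta,\gamma;p)$ and using that $\mu$ is supported on squarefree integers, the sum over odd $a$ becomes
\[
\sum_{(a,2)=1} \frac{\mu(a)}{a^{2-2\gamma}} \prod_{p\mid a} \frac{K_1(\alpha,\beta,\gamma;p)}{K_2(\alpha,\beta,\gamma;p)} = \prod_{(p,2)=1}\!\left(1 - \frac{K_1(\alpha,\beta,\gamma;p)}{p^{2-2\gamma}\,K_2(\alpha,\beta,\gamma;p)}\right),
\]
which absorbs the $\prod_{(p,2)=1} K_2$ factor to yield $\prod_{(p,2)=1} \bigl(K_2-K_1/p^{2-2\gamma}\bigr)$. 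This gives the raw identity
\[
\sum_{(a,2)=1} \frac{\mu(a)}{a^{2-2\gamma}} Z_3(\alpha,\beta,\gamma,a) = \zeta(2\alpha+2\gamma)^2 \zeta(2\beta+2\gamma)^2\, K_1(\alpha,\beta,\gamma;2) \prod_{(p,2)=1}\!\left(K_2 - \tfrac{K_1}{p^{2-2\gamma}}\right).
\]

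Next I would match this to the stated form of $Z_4$ by multiplying and dividing by the prime-by-prime Euler factors of
\[
\frac{\zeta(2\alpha+2\gamma)^3 \zeta(2\beta+2\gamma)^3 \zeta(\alpha+\beta+2\gamma)^4}{\zeta(\tfrac{1}{2}+\alpha+2\gamma)^2 \zeta(\tfrac{1}{2}+\beta+2\gamma)^2}.
\]
A direct expansion via $\zeta(s)=\prod_p(1-p^{-s})^{-1}$ shows that the ratio of this zeta-quotient to $\zeta(2\alpha+2\gamma)^2\zeta(2\beta+2\gamma)^2$ is precisely
\[
\prod_p \frac{(1-p^{-\frac{1}{2}-\alpha-2\gamma})^2 (1-p^{-\frac{1}{2}-\beta-2\gamma})^2}{(1-p^{-2\alpha-2\gamma})(1-p^{-2\beta-2\gamma})(1-p^{-\alpha-\beta-2\gamma})^4},
\]
so inserting this into the claimed formula and cancelling the zeta quotient against the middle Euler product in the definition of $Z_4$ leaves exactly the raw formula above. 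This proves (\ref{equ:Z-4}).

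The remaining — and main — task is the analytic continuation and uniform boundedness of $Z_4$ in the region $\Re(\alpha),\Re(\beta)\geq \tfrac{3}{8}$, $-\tfrac{1}{16}\leq \Re(\gamma)\leq \tfrac{1}{8}$. For this I would expand $K_2(\alpha,\beta,\gamma;p)$ and $K_1(\alpha,\beta,\gamma;p)/p^{2-2\gamma}$ into geometric-series style sums in inverse powers of $p$ and compute the local factor of $Z_4$ at an odd prime, namely
\[
\frac{(1-p^{-2\alpha-2\gamma})(1-p^{-2\beta-2\gamma})(1-p^{-\alpha-\beta-2\gamma})^4}{(1-p^{-\frac{1}{2}-\alpha-2\gamma})^2 (1-p^{-\frac{1}{2}-\beta-2\gamma})^2}\left(K_2(\alpha,\beta,\gamma;p) - \frac{K_1(\alpha,\beta,\gamma;p)}{p^{2-2\gamma}}\right).
\]
I expect that after multiplying out the factors $(1-p^{-\frac{1}{2}-\alpha})^2(1-p^{-\frac{1}{2}-\beta})^2$ inside $K_1$ and $K_2$ against the denominator, all contributions from exponents that could exceed $1$ in absolute value (those involving $p^{-\frac{1}{2}-\alpha-2\gamma}$, $p^{-\alpha-\beta-2\gamma}$, $p^{-2\alpha-2\gamma}$, etc.) cancel out through the subtraction and the explicit zeta extraction, leaving a local factor $1 + O\bigl(p^{-1-\delta}\bigr)$ for some fixed $\delta>0$ throughout the region. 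The hard part is precisely this polynomial bookkeeping: each of the expressions (\ref{equ:15.5+}) and (\ref{equ:15.5++}) has many terms of comparable size, and one has to verify that the worst-behaved monomials (those with smallest total exponent on $p$) combine correctly so that the leading and subleading polar contributions are entirely captured by the explicit zeta-quotient pulled out in front. Once this is done, the boundedness of the Euler product is immediate by comparison with $\prod_p(1+O(p^{-1-\delta}))$, and analyticity follows from absolute convergence.
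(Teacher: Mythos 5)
Your proposal follows essentially the same route as the paper: decompose $Z_3$ into its Euler product, resum the Möbius-weighted sum over odd squarefree $a$ into $\prod_{(p,2)=1}\bigl(K_2 - K_1/p^{2-2\gamma}\bigr)$, pull out $K_1(\alpha,\beta,\gamma;2)$, and observe that the zeta-quotient times the middle Euler product in the definition of $Z_4$ collapses identically to $\zeta(2\alpha+2\gamma)^2\zeta(2\beta+2\gamma)^2$. The paper likewise defers the boundedness claim to a direct verification from the definition, exactly the ``polynomial bookkeeping'' you describe (and your sketch of which monomials must cancel is the correct way to carry it out), so the two proofs match in substance.
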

\begin{proof}
We have
\begin{equation*}
\begin{split}
&\sum_{(a,2)=1} \frac{\mu(a)}{a^{2-2\gamma}} Z_3 (\alpha,\beta,\gamma,a) \\
&= \zeta(2\alpha+2\gamma)^2 \zeta(2\beta+2\gamma)^2 \sum_{(a,2)=1} \frac{\mu(a)}{a^{2-2\gamma}} \prod_{p|2a} K_1(\alpha,\beta,\gamma;p) \prod_{p \nmid 2a} K_2(\alpha,\beta,\gamma;p) \\
&= \zeta(2\alpha+2\gamma)^2 \zeta(2\beta+2\gamma)^2 K_1(\alpha,\beta,\gamma;2)\prod_{(p,2)=1} \left(K_2(\alpha,\beta,\gamma;p) - \frac{1}{p^{2-2\gamma}} K_1(\alpha,\beta,\gamma;p)\right).
\end{split}
\end{equation*}
This implies the equation (\ref{equ:Z-4}). The later part of the lemma   can  be proved directly  by the definition of  $Z_4(\alpha,\beta,\gamma)$.
\end{proof}
It follows from (\ref{equ:15.9}) and  Lemma \ref{lem:15.4} that
\begin{align}
S_1(k=\Box)
 & =\frac{2X }{(2\pi i)^3 }\int_{(1)}\int_{(1)}\int_{(\frac{1}{10})}  \mathcal{J}(s,1) (2^{1-2s}-1)\zeta(2s) g(u+s)g(v+s) \frac{U_1^{u+s} U_2^{v+s}X^{-s}}{(u+s)(v+s)}  \nonumber \\
&\quad \times  \zeta^2(1+u)\zeta^2(1+v)  
 \frac{\Gamma(s) \zeta(1+2u+2s)^3 \zeta(1+2v+2s)^3  \zeta(1+u+v+2s)^4 }{\zeta(1+u+2s)^2 \zeta(1+v+2s)^2 } \nonumber\\ 
&\quad \times Z_4(\tfrac{1}{2}+u,\tfrac{1}{2}+v,s) \ ds \ du \ dv + O \left(X  Y^{-1}(\log^{21} X ) \Phi_{(5)} \right),
\label{equ:15.10}
\end{align}
where 
$Z_4(\tfrac{1}{2}+u,\tfrac{1}{2}+v,s)$ is analytic and uniformly bounded in the region  $\Re(u),\Re(v)\geq -\frac{1}{8}$,  $-\frac{1}{16}  \leq \Re(s) \leq \frac{1}{8} $. 

Move the lines of the triple integral above  to $\Re(u) = \Re(v) = \Re(s) = \frac{1}{100}$ without encountering any poles. Then move the line of the integral over $v$ to $\Re(v)= -\frac{1}{50}  + \frac{1}{\log X}$.  There is a pole of order at most  $2$ at $v=0$, and a pole of  order at most $4$ at $v=-s$, so the triple integral  in (\ref{equ:15.10}) is 
\begin{align}
  \frac{1}{(2\pi i)^2} \int_{(\frac{1}{100})} \int_{(\frac{1}{100})} I_2  (u,s) + I_3 (u,s) 
 \ du \ ds + O \left(U_1^{\frac{1}{50}} U_2^{-\frac{1}{100}}X^{-\frac{1}{100}} (\log^2 X)  \Phi_{(5)} \right),
\label{equ:15.12}
\end{align} 
where
$I_2(u,s),I_3(u,s)$ are the residues of the integrand in (\ref{equ:15.10}) at $v=0$ and $v=-s$, respectively. 

The double integral of $I_3 (u,s)$ in (\ref{equ:15.12}) is bounded. To see this, note that
\begin{multline*}
I_3(u,v)=   \mathcal{J}(s,1) (2^{1-2s}-1)\zeta(2s) g(u+s)  \frac{U_1^{u+s}X^{-s}}{u+s}  
 \frac{\zeta(1+u)^2  \Gamma(s) \zeta(1+2u+2s)^3}{\zeta(1+u+2s)^2} \frac{1}{3!}\left.  \frac{d^3}{dv^3} \right|_{v=-s} \\
 \left(\frac{g(v+s) U_2^{v+s}\zeta(1+v)^2 \zeta(1+2v+2s)^3(v+s)^3 \zeta(1+u+v+2s)^4 Z_4(\tfrac{1}{2}+u,\tfrac{1}{2}+v,s)}{\zeta(1+v+2s)^2 } \right).
\end{multline*}
Moving the line of the following  integral in terms  of $u$ from  $\Re(u)=\frac{1}{100}$ to $\Re(u)=\frac{1}{\log X}$ gives
\begin{align}
 \frac{1}{(2\pi i)^2} \int_{(\frac{1}{100})} \int_{(\frac{1}{100})}  I_3 (u,s) 
 \ du \ ds
\ll  U_1^{\frac{1}{100}}  X^{-\frac{1}{100}} ( \log^5   X) \Phi_{(5)}.
 \label{equ:15.13}
\end{align}

Now we  handle the double integral of $I_2 (u,s)$ in (\ref{equ:15.12}). Write the integrand in  (\ref{equ:15.10}) in the form  of   
\[
\frac{U_1^{u+s} U_2^{v+s}X^{-s}}{(u+s)(v+s)}    \frac{1}{u^2v^2}   \frac{(u+2s)^2 (v+2s)^2}{s (2u+2s)^3 (2v+2s)^3 (u+v+2s)^4}\mathcal{F}(u,v,s).
\]
Clearly, $\mathcal{F}(u,v,s)$ is analytic in the region  $\Re(u+2s),\Re(v+2s)>0$,   $\Re(u),\Re(v)\geq -\frac{1}{8}$ and  $-\frac{1}{16}  \leq \Re(s) \leq \frac{1}{8} $.
We have
\begin{multline*}
I_2(u,s)
= \frac{U_1^{u+s} U_2^{s} X^{-s}}{ 16(u+2s)^3 (u+s)^4 s^4 u^2}\\
\times \left[
\mathcal{F} (u,0,s) ( us \log U_2 + 2s^2 \log U_2 -10s -3u)
+
\mathcal{F}^{(0,1,0)} (u,0,s)  (us+2s^2)
\right].
\end{multline*}
Move the line of the double integral below from $\Re(u)= \frac{1}{100} $ to $\Re(u)= -\frac{1}{100} + \frac{1}{\log X}$. There is  one possible pole at $u=0$. Hence, 
\begin{align}
\frac{1}{(2\pi i)^2} \int_{(\frac{1}{100})} \int_{(\frac{1}{100})} 
I_2(u,s)  \ du\ ds
= \frac{1}{2\pi i} \int_{(\frac{1}{100})}
\underset{u=0}{\operatorname{Res}} 
 \left( I_2(u,s) \right) ds
+ O \left(U_2^{\frac{1}{100}}X^{-\frac{1}{100}} \log^{8} X \right).
\label{equ:15.14}
\end{align}
Note that
\begin{multline*}
\underset{u=0}{\operatorname{Res}}\ I_2(u,s)
= \frac{U_1^{s} U_2^s X^{-s}}{64 s^{11} }
 \Big(
\mathcal{F}(0,0,s)(s^2\log U_1 \log U_2  
-5  s\log U_1 
-5 s \log U_2
+26
  )\\
+\mathcal{F}^{(1,0,0)}(0,0,s)(s^2 \log U_2  
-5   s)
+\mathcal{F}^{(0,1,0)}(0,0,s)(s^2\log U_1  
-5   s)
+ \mathcal{F}^{(1,1,0)}(0,0,s) s^2
\Big).
\end{multline*}
%and 
%\begin{align*}
%\underset{u=-s}{\operatorname{Res}}
%I_2(u,s)=U_2^s X^{-s} T_1(s) \log^4 X,
%\end{align*}
%where $T_1(a+it)$ is a explicit function that decays exponentially as $|t| \rightarrow +\infty$ and is bounded for $|t|<1$ for any $0<a<1$ . 
%Together with (\ref{equ:15.14}) we have
%\begin{align*}
%(\ref{equ:15.14})= \frac{1}{2\pi i} \int_{(\frac{1}{100})} I_3(s) ds
%+ O \left(U_2^{\frac{1}{100}}X^{-\frac{1}{100}}\log^4 X \right) + O \left(U_1^{-\frac{1}{200}} U_2^{\frac{1}{100}}X^{-\frac{1}{100}} \log X  F_{(5)}\right).
%\end{align*}
We see that the expression in the brackets above  is analytic for $-\frac{1}{16}  \leq \Re(s) \leq \frac{1}{8} $. Then 
we move the line of the integral below to $\Re(s)=-\frac{1}{100}$ with only a possible pole at $s=0$, and get that
\begin{multline}
\frac{1}{2\pi i} \int_{(\frac{1}{100})}
\underset{u=0}{\operatorname{Res}} 
 \left( I_2(u,s) \right) ds
=
\frac{\mathcal{F}(0,0,0)}{64} \sum_{\substack{j_1 + j_2 +j_3 +j_4 = 10 \\ j_1,j_2,j_3,j_4 \geq 0}} \frac{(-1)^{j_3} B(j_4)}{j_1 ! j_2 ! j_3 ! j_4 !} 
 (\log^{j_1} U_1)  (\log^{j_2} U_2) (\log^{j_3} X)\\
 + O \left( U_1^{-\frac{1}{100}} U_2^{-\frac{1}{100}} X^{\frac{1}{100}} \log^2 X+ \log^9 X\right),
 \label{equ:15.15}
\end{multline}
where 
 \begin{equation}
  \begin{split}
 B(j)=\left\{
 \begin{array}
  [c]{ll}
     26                    & \text{if } j=0,\\
  -5 (\log U_1 + \log U_2) & \text{if } j=1,\\
   2\log U_1 \log U_2      & \text{if } j= 2,\\
  0                        & \text{if } j\geq 3.
 \end{array}
 \right.
 \end{split}
 \label{equ:B(j)}
\end{equation}

Next we compute $\mathcal{F}(0,0,0)$ above. Note that $\mathcal{F}(0,0,0)  = \mathcal{J}(0,1) g(0)^2  Z_4 (\frac{1}{2}, \frac{1}{2}, 0) = -\frac{1}{2}\tilde{\Phi}(1) Z_4 (\frac{1}{2}, \frac{1}{2}, 0)$. Recalling the definition of $Z_4 (\alpha, \beta, \gamma)$ from Lemma \ref{lem:Z-4}, we have
\begin{align*}
&Z_4 (\tfrac{1}{2}, \tfrac{1}{2}, 0) \\
&= K_1 ( \tfrac{1}{2}, \tfrac{1}{2}, 0; 2 ) \prod_{p} \left( 1 - \frac{1}{p}\right)^2 \prod_{(p,2)=1} \left( K_2 ( \tfrac{1}{2}, \tfrac{1}{2},0 ; p)- \frac{1}{p^2}K_1 (\tfrac{1}{2}, \tfrac{1}{2}, 0;p) \right) \\
&=  \frac{1}{4} K_1 (\tfrac{1}{2}, \tfrac{1}{2}, 0;2 ) \prod_{(p,2)=1}   \left( 1 - \frac{1}{p}\right)^7  \left( 1 + \frac{7}{p} - \frac{3}{p^2} + \frac{6}{p^3} - \frac{4}{p^4} + \frac{1}{p^5}\right)\\
&= \frac{1}{4 \zeta_2 (2)} K_1 ( \tfrac{1}{2}, \tfrac{1}{2}, 0;2 ) \prod_{(p,2)=1}  \frac{ ( 1 - \frac{1}{p})^6}{1+ \frac{1}{p}}  \left( 1 + \frac{7}{p} - \frac{3}{p^2} + \frac{6}{p^3} - \frac{4}{p^4} + \frac{1}{p^5}\right)\\
&=\frac{32a_4 }{\pi^2}.
\end{align*}
The last equality is due to (\ref{equ:a-4-value}). Thus, 
\[
\mathcal{F}(0,0,0) = - \frac{16 \tilde{\Phi}(1) a_4}{\pi^2}. 
\]

Combining (\ref{equ:15.10}) with (\ref{equ:15.12}), (\ref{equ:15.13}), (\ref{equ:15.14}), (\ref{equ:15.15}), and the identity above,  it follows that
\begin{lem} \label{lem:k=square}
We have
\begin{multline*}
S_1(k=\Box) 
  = - \frac{a_4 \tilde{\Phi}(1) X}{2 \pi^2}    \sum_{j_1 + j_2 +j_3 +j_4 = 10} \frac{(-1)^{j_3} B(j_4)}{j_1 ! j_2 ! j_3 ! j_4 !} 
 (\log^{j_1} U_1)  (\log^{j_2} U_2) (\log^{j_3} X) \\
 + X 
 \cdot O \left(  
   \log^{9} X 
 + U_1^{\frac{1}{100}}  X^{-\frac{1}{100}} ( \log^5   X) \Phi_{(5)}
 + U_1^{\frac{1}{50}} U_2^{-\frac{1}{100}} X^{-\frac{1}{100}} (\log^2 X) \Phi_{(5)}
 +  Y^{-1} (\log ^{21} X)  \Phi_{(5)}
   \right) .
\end{multline*}
\end{lem}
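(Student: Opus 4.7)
The plan is to evaluate $S_1(k=\Box)$ by reducing the sum to a triple contour integral whose main contribution can be extracted by a cascade of residue calculations at poles of the zeta factors that are present in the integrand. Starting from the definition in \eqref{equ:15.4}, I would first reparametrize by replacing the square index $k$ with $k^{2}$ to obtain \eqref{equ:15.5}, so that the inner sum is now $\sum_{k\geq 1}(-1)^{k}k^{-2s}Z_{2}(\tfrac12+u,\tfrac12+v,a,k^{2})$. Applying Lemma \ref{lem:15.3} then collapses this into $(2^{1-2s}-1)\zeta(2s)Z_{3}(\tfrac12+u,\tfrac12+v,s,a)$, factoring out the singular behavior in $s$.

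Next, I would extend the outer sum over $a \leq Y$ to a full sum over all odd $a$, shifting the $(u,v,s)$-contours into the region of holomorphy provided by part (2) of Lemma \ref{lem:15.3} to estimate the tail by $XY^{-1}(\log X)^{21}\Phi_{(5)}$. Then Lemma \ref{lem:15.4} identifies the resulting $a$-sum with an explicit product of zeta values divided by $\zeta(1+u+2s)^{2}\zeta(1+v+2s)^{2}$, multiplied by the nicely bounded factor $Z_{4}(\tfrac12+u,\tfrac12+v,s)$. This yields the clean triple integral \eqref{equ:15.10}, which is where the real work begins.

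The core of the argument is contour shifting. I would first move all three contours to $\Re(u)=\Re(v)=\Re(s)=1/100$, then push $\Re(v)$ slightly past $0$, picking up residues at $v=0$ (order two, from $\zeta(1+v)^{2}$) and $v=-s$ (order four, from $\zeta(1+u+v+2s)^{4}$). The residue $I_{3}(u,s)$ at $v=-s$ is shown to contribute only an error of size $U_{1}^{1/100}X^{-1/100}(\log X)^{5}\Phi_{(5)}$ after further pushing $\Re(u)$ to $1/\log X$. For the residue $I_{2}(u,s)$ at $v=0$, I would push the $u$-contour across the double pole at $u=0$ and finally push the $s$-contour across $s=0$, which by then is a pole of high order coming from the remaining $s^{-11}$ factor together with the other analytic ingredients. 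The coefficient polynomial $B(j_{4})$ in \eqref{equ:B(j)} arises from Taylor-expanding the analytic factor $\mathcal{F}(u,v,s)$ and the exponentials $U_{1}^{u+s}U_{2}^{v+s}X^{-s}$ at the origin, and it is this step that produces the sum $\sum_{j_{1}+j_{2}+j_{3}+j_{4}=10}$ in the stated formula.

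The remaining task is to compute the leading constant. One has $\mathcal{F}(0,0,0)=\mathcal{J}(0,1)\,g(0)^{2}\,Z_{4}(\tfrac12,\tfrac12,0)=-\tfrac12\tilde{\Phi}(1)\,Z_{4}(\tfrac12,\tfrac12,0)$. I would evaluate $Z_{4}(\tfrac12,\tfrac12,0)$ directly from the Euler product in Lemma \ref{lem:Z-4}, comparing the resulting local factors with those of $a_{4}$ in \eqref{equ:a-4-value} exactly as was done for $Z_{1}(\tfrac12,\tfrac12)$ in Lemma \ref{lem:Z_1}; this yields $Z_{4}(\tfrac12,\tfrac12,0)=32a_{4}/\pi^{2}$ and hence $\mathcal{F}(0,0,0)=-16\tilde{\Phi}(1)a_{4}/\pi^{2}$, giving the claimed main term after collecting all error contributions from the three contour shifts. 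The hardest step will be the final $s=0$ residue: with poles simultaneously at $v=0$, $u=0$ and $s=0$ of orders $2$, $2$ and effectively $11$, bookkeeping of every Taylor coefficient of $\mathcal{F}$ and the surrounding $\zeta$-factors is delicate, and ensuring no additional polynomial contribution in $\log U_{1},\log U_{2},\log X$ is overlooked is the main source of intricacy in the calculation.
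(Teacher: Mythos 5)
Your proposal follows the paper's proof step for step: the substitution $k\mapsto k^{2}$, applying Lemma \ref{lem:15.3} to collapse the $k$-sum into $(2^{1-2s}-1)\zeta(2s)Z_{3}$, extending the $a$-sum at cost $O(XY^{-1}(\log X)^{21}\Phi_{(5)})$ and applying Lemma \ref{lem:15.4} to arrive at \eqref{equ:15.10}, then the cascade of contour shifts across $v=0$, $u=0$, $s=0$ with $I_{3}$ absorbed into the error, and finally the computation $\mathcal{F}(0,0,0)=-16\tilde{\Phi}(1)a_{4}/\pi^{2}$ via $Z_{4}(\tfrac12,\tfrac12,0)=32a_{4}/\pi^{2}$. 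One small correction: the order-four pole at $v=-s$ does not come from $\zeta(1+u+v+2s)^{4}$ (whose pole is at $v=-u-2s$, which is never crossed); it comes from the simple factor $(v+s)^{-1}$ together with the triple pole of $\zeta(1+2v+2s)^{3}$. This misattribution would cause confusion if you actually sat down to compute the residue, but the locations and orders you list are right, so the outlined argument is correct and coincides with the paper's.
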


\section{Upper bounds for  $S_1(k \neq \Box)$}\label{sec:error}
In this section, we shall prove  the following upper bounds for $S_1(k \neq \square)$. The techniques applied here are from \cite[Section 5.4]{Sound00} and the last part of \cite[Section 3]{Sound-Young}.
\begin{lem}\label{lem:error}
Unconditionally, we have
\[
S_1(k \neq \Box) \ll U_1^{\frac{1}{2}} U_2^{\frac{1}{2}}Y X^{\varepsilon}  \Phi_{(5)}.
\]
Under GRH, we have 
\[
S_1(k \neq \Box) \ll U_1^{\frac{1}{2}} U_2^{\frac{1}{2}} Y (\log X)^{2^{17}} \Phi_{(5)}.
\]
\end{lem}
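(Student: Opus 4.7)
The strategy is to shift contours in the integral representation \eqref{equ:15.4} of $S_1(k\neq\Box)$ and then apply moment estimates for Dirichlet $L$-functions. The crucial observation is that when $k$ is a non-zero non-square, the associated fundamental discriminant $k_1\neq 1$, so $\chi_{k_1}$ is a non-principal primitive character and $L(1+u,\chi_{k_1})$, $L(1+v,\chi_{k_1})$ are entire in $u,v$. Combined with Lemma \ref{lem:15.2}, which establishes analyticity of $Z_2(\tfrac{1}{2}+u,\tfrac{1}{2}+v,a,k)$ for $\Re(u),\Re(v)>-\tfrac{1}{2}$, this permits wide contour deformations.

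First I would shift the $s$-contour rightward from $\Re(s)=\tfrac{1}{2}+\varepsilon$ to $\Re(s)=1+\varepsilon$, and then move the $u$- and $v$-contours leftward to $\Re(u)=\Re(v)=-\tfrac{1}{2}+\tfrac{1}{\log X}$. This deformation crosses no poles: the candidate poles $u=-s$ and $v=-s$ have real part $-1-\varepsilon$, outside the new region for $(u,v)$; the factors $\Gamma(s)$, $g(u+s)$, $g(v+s)$, $\mathcal{J}(s,k)$ are analytic throughout; and $L(1+u,\chi_{k_1})$, $L(1+v,\chi_{k_1})$, $Z_2$ are analytic by the remarks above. Moving $s$ rightward renders $|k|^{-s}$ an absolutely summable weight in $k$, while moving $u,v$ leftward places the $L$-values at $\Re=\tfrac{1}{2}+\tfrac{1}{\log X}$, close enough to the critical line for the fourth moment bounds to be sharp.

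Next, for fixed $u,v,s$ on the new contours, I would bound the $k$-sum absolutely. Writing $4k=k_1 k_2^2$ with $k_1$ a non-trivial fundamental discriminant and $k_2\geq 1$, the $k_2$-sum is bounded by $\zeta(2+2\varepsilon)<\infty$, and the remaining $k_1$-sum is handled by partial summation. Specifically, applying $|L_1 L_2|^2\leq\tfrac{1}{2}(|L_1|^4+|L_2|^4)$ together with the fourth moment bound of Lemma \ref{lem:2.3} (unconditionally) or Theorem \ref{thm:3.1} (under GRH) at $\tfrac{1}{2}+\tfrac{1}{\log X}+it$, and combined with the bound $|Z_2|\ll\tau^4(a)\tau^8(|k|)\log^{10}X$ from Lemma \ref{lem:15.2}, the $k_1$-sum converges absolutely and is bounded by $O(\tau^4(a)X^\varepsilon)$ unconditionally, or by $O(\tau^4(a)(\log X)^{C})$ under GRH for some explicit constant $C$.

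Finally, the $a$-sum contributes $\sum_{a\leq Y}a^{2\varepsilon}\tau^4(a)\ll Y(\log Y)^{15}$ (since $a^{2s-2}=a^{2\varepsilon}$ at $\Re(s)=1+\varepsilon$), the factors $U_1^{u+s}U_2^{v+s}X^{-s}$ contribute $U_1^{1/2+\varepsilon}U_2^{1/2+\varepsilon}X^{-1-\varepsilon}$, and the iterated integrals over $u,v,s$ converge thanks to the exponential decay of $g(u+s)$, $g(v+s)$ and the polynomial decay of $\Gamma(s)\tilde\Phi(1-s)$, the latter producing the $\Phi_{(5)}$ factor via \eqref{equ:bd for fei} with $\nu=5$. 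Multiplying everything together with the prefactor $X$ yields the unconditional bound $\ll U_1^{1/2}U_2^{1/2}YX^\varepsilon\Phi_{(5)}$ and, under GRH, the analogous bound with $(\log X)^{2^{17}}$ in place of $X^\varepsilon$. I expect the main obstacle to be precisely tracking the accumulated polylog factors under GRH, which requires careful accounting at each application of Theorem \ref{thm:3.1}, particularly the contribution of the near-diagonal $1/|\Im(z_1)-\Im(z_2)|^6$ term integrated along the $u,v$ contours.
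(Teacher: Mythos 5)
Your overall strategy — shift contours in \eqref{equ:15.4}, then apply the fourth moment bounds together with the estimate \eqref{equ:15.111111} for $Z_2$ — is exactly the route the paper takes, and your contour choices for $u,v$ (to $\Re(u)=\Re(v)=-\tfrac12+O(1/\log X)$, at the edge of the region where Lemma \ref{lem:15.2} controls $Z_2$) are correct. The unconditional bound does come out this way, since the $X^\varepsilon$ loss is permitted there.

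The GRH bound does \emph{not} follow from your argument, however, and the obstruction is not the near-diagonal term of Theorem \ref{thm:3.1} you worry about (after the AM--GM step $|L_1 L_2|^2\le\tfrac12(|L_1|^4+|L_2|^4)$ you always apply Theorem \ref{thm:3.1} with $z_1=z_2$, so the $\min$ simply evaluates to $(\log X)^6$). The real problem is the single shift to $\Re(s)=1+\varepsilon$ with $\varepsilon$ a fixed positive constant. After the shift, the $U$-, $X$- and $a$-factors contribute $U_1^{1/2+\varepsilon}U_2^{1/2+\varepsilon}X^{-\varepsilon}\cdot a^{2\varepsilon}$, and the $a$-sum is $\sum_{a\le Y}a^{2\varepsilon}\tau^4(a)\ll Y^{1+2\varepsilon}(\log Y)^{15}$ — note the $Y^{2\varepsilon}$ you discarded. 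Collecting powers yields the extra factor $\bigl(U_1U_2 Y^2/X\bigr)^{\varepsilon}$, which is $X^{O(\varepsilon)}$ for the parameter ranges in use and is \emph{not} a power of $\log X$. You cannot remove it by sending $\varepsilon\to 0$, because the convergence of the $k_1$-sum $\sum|k_1|^{-1-\varepsilon}\tau^8(|k_1|)|L|^4$ costs a factor of order $\varepsilon^{-1}$ under GRH (partial summation against $\sum_{|k_1|\le D}\tau^8|L|^4\ll D(\log X)^{C}$). Thus your claimed substitution ``$X^\varepsilon\rightsquigarrow(\log X)^{2^{17}}$'' does not happen. The paper avoids this by truncating the $k_1$-sum at $T=U_1U_2Y^2/X$ and using two contours: $\Re(s)=\tfrac34$ for $|k_1|\le T$ and $\Re(s)=\tfrac54$ for $|k_1|>T$. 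Then the $k_1$-sum produces \emph{exactly} $T^{1/4}(\log X)^{2^{16}}$ (respectively $T^{-1/4}(\log X)^{2^{16}}$), with no $\varepsilon$-loss under GRH, and the powers of $U_1,U_2,X,Y$ cancel to give $U_1^{1/2}U_2^{1/2}Y$ on the nose. An alternative repair of your route would be to take $\Re(s)=1+c/\log X$ and accept the extra $\log X$ from $\varepsilon^{-1}$; either way, using a fixed $\varepsilon$ does not yield the polylogarithmic GRH bound. Finally, note that Theorem \ref{thm:3.1} only applies for $|\Im(u)|\le X$; the range $|\Im(u)|>X$ (or $>X^{1/5}$ as the paper chooses) must be handled by Lemma \ref{lem:2.3}, with the resulting $(1+|\Im u|)^{1+\varepsilon}$ absorbed by the decay from $\Gamma(s)\tilde\Phi(1-s)(\cos+\sin)(\pi s/2)$ and $g(u+s)g(v+s)$ — a point your write-up also passes over.
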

\begin{proof}
It follows from (\ref{equ:15.4}) that 
\begin{multline}
S_1(k \neq \Box) 
\ll  X\sum_{\substack{a\leq Y \\ (a,2)=1}} \frac{1}{a^2}\sideset{}{^\flat}\sum_{k_1 \neq 0,1}\sum_{k_2=1}^{\infty} \Bigg | \int_{(\varepsilon)}\int_{(\varepsilon)}\int_{(\frac{1}{2}+\varepsilon)} \Gamma(s) \mathcal{J}(s,k_1) a^{2s} g(u+s)g(v+s) \frac{U_1^{u+s} U_2^{v+s} X^{-s}}{(u+s)(v+s)} \\
\times \frac{4^s}{|k_1k_2^2|^s}  L(1+u,\chi_{k_1})^2  L(1+v,\chi_{k_1})^2 Z_2(\tfrac{1}{2}+u,\tfrac{1}{2}+v,a,k_1k_2^2)\ ds \ du \ dv \Bigg |.
\label{equ:16.1}
\end{multline}
Separate the sum over $k_1$ to the sum over  $|k_1| \leq T:= U_1 U_2 Y^2 X^{-1}$, and that over  $|k_1| > T$. Clearly, $X^{\frac{4}{5}} \leq T \leq X^3$ since $X^{\frac{9}{10}} \leq U_1 \leq  U_2 \leq X$ and $ 1\leq Y \leq X$. For the first category, we move the the lines of the integral to 
$\Re(u)=\Re(v) = -\frac{1}{2}+ \frac{1}{4\log X}$, $\Re(s)= \frac{3}{4}$, while for the second category, we move the lines to $\Re(u)=\Re(v) = -\frac{1}{2}+ \frac{1}{4\log X}$,  $\Re(s)= \frac{5}{4}$. 

By (\ref{equ:15.111111}), the terms in the first category are bounded by 
\begin{multline}
\ll  X^{\frac{1}{4}} U_1^{\frac{1}{4}} U_2^{\frac{1}{4}} \log^{10} X\sum_{\substack{a\leq Y \\ (a,2)=1}} \frac{\tau^{4}(a)}{\sqrt{a}}\int_{(-\frac{1}{2}+\frac{1}{\log X})}\int_{(-\frac{1}{2}+\frac{1}{\log X})}\int_{(\frac{3}{4})} |\mathcal{J}(s,k_1) \Gamma(s)g(u+s)g(v+s)| \\
\times \sideset{}{^\flat}\sum_{ |k_1| \leq  T}\frac{\tau^{8}(|k_1|)}{|k_1|^{\frac{3}{4}}}  |L(1+u,\chi_{k_1}) |^4 \ |ds| \ |du| \ |dv|.
\label{equ:16.2}
\end{multline}

Note that 
\begin{align}
\sideset{}{^\flat} \sum_{ |k_1| \leq  T}\frac{\tau^{8}(|k_1|)}{|k_1|^{\frac{3}{4}}}  |L(1+u,\chi_{k_1}) |^4 
 \ll \sum_{1\leq 2^l \leq T} \sideset{}{^\flat} \sum_{2^l < |k_1| \leq 2^{l+1}} \frac{\tau^{8}(|k_1|)}{|k_1|^{\frac{3}{4}}}|L(1+u,\chi_{k_1})|^4.
\label{equ:16.3a-}
\end{align}
By (\ref{equ:16.3a-})  and Lemma  \ref{lem:2.3},  it follows that
\begin{align}
\sideset{}{^\flat} \sum_{ |k_1| \leq  T}\frac{\tau^{8}(|k_1|)}{|k_1|^{\frac{3}{4}}}  |L(1+u,\chi_{k_1}) |^4 
 \ll T^{\frac{1}{4}+\varepsilon} 
(1+|\Im(u)|)^{1+\varepsilon}.
\label{equ:16.3a}
\end{align}

This bound can be improved under GRH.
% In fact, by Proposition \ref{thm:3.1}, we obtain that
%\begin{align}
%& \sideset{}{^\flat} \sum_{ |k_1| \leq  T}\frac{\tau^{8}(|k_1|)}{|k_1|^{\frac{3}{4}}}  |L(1+u,\chi_{k_1}) |^4  \ll \sum_{T^{\frac{1}{10}} \leq 2^l \leq T} \sideset{}{^\flat} \sum_{2^l < |k_1| \leq 2^{l+1}} \frac{\tau^{8}(|k_1|)}{|k_1|^{\frac{3}{4}}}|L(1+u,\chi_{k_1})|^4 + O\left( T^{\frac{1}{9}}\right)
%\label{equ:16.b-}
%\end{align}
%for $|\Im(u)|\leq T$.
In fact, 
we split the left-hand side of (\ref{equ:16.3a}) into
\begin{align*}
\sideset{}{^\flat} \sum_{ |k_1| \leq  T}\frac{\tau^{8}(|k_1|)}{|k_1|^{\frac{3}{4}}}  |L(1+u,\chi_{k_1}) |^4 
=\sideset{}{^\flat} \sum_{ |k_1| \leq  X^{\frac{1}{5}}}\frac{\tau^{8}(|k_1|)}{|k_1|^{\frac{3}{4}}}  |L(1+u,\chi_{k_1}) |^4  + \sideset{}{^\flat} \sum_{ X^{\frac{1}{5}}<|k_1| \leq  T}\frac{\tau^{8}(|k_1|)}{|k_1|^{\frac{3}{4}}}  |L(1+u,\chi_{k_1}) |^4.
\end{align*}
By Theorem \ref{thm:3.1}, we have for $|\Im(u)| \leq X^{\frac{1}{5}}$,
\[
\sideset{}{^\flat} \sum_{ |k_1| \leq  X^{\frac{1}{5}}}\frac{\tau^{8}(|k_1|)}{|k_1|^{\frac{3}{4}}}  |L(1+u,\chi_{k_1}) |^4 \ll X^{\frac{1}{5}} \log^{11} X.
\]
Later in (\ref{equ:trivial-upper-L}) of Section \ref{sec:upper bd}, under GRH, it will be proved that for $-\frac{1}{2}\leq \Re(u) \leq -\frac{1}{2} + \frac{1}{\log X}$ and $|\Im(u)|\leq X$,
\begin{align*}
\sideset{}{^\flat}\sum_{|k_1| \leq X}|L(1+u,\chi_{k_1}) |^8  \ll X  (\log X)^{37}.
\end{align*}
Using dyadic blocks and Cauchy-Schwarz inequality,  combined with the above bound, we can deduce that for $|\Im(u)| \leq  X^{\frac{1}{5}}$,
\[
\sideset{}{^\flat} \sum_{ X^{\frac{1}{5}}<|k_1| \leq  T}\frac{\tau^{8}(|k_1|)}{|k_1|^{\frac{3}{4}}}  |L(1+u,\chi_{k_1}) |^4 \ll T^{\frac{1}{4}} \log^{2^{16}} X.
\]
Thus for $|\Im(u)| \leq  X^{\frac{1}{5}}$,
\begin{align}
\sideset{}{^\flat} \sum_{ |k_1| \leq T}\frac{\tau^{8}(|k_1|)}{|k_1|^{\frac{3}{4}}}  |L(1+u,\chi_{k_1}) |^4 \ll T^{\frac{1}{4}} \log^{2^{16}} X.
\label{equ:16.3b}
\end{align}
Recall the definition of $T$. Substituting both (\ref{equ:16.3a}) and (\ref{equ:16.3b}) in (\ref{equ:16.2}), we have proved the contribution of the terms in the first category is  $ \ll U_1^{\frac{1}{2}} U_2^{\frac{1}{2}} Y (\log X)^{2^{17}} \Phi_{(5)}$.  Similarly, we can  deduce  that the  contribution of the terms in the second category is also $ \ll U_1^{\frac{1}{2}} U_2^{\frac{1}{2}} Y (\log X)^{2^{17}} \Phi_{(5)}$.

The conditional part of the lemma is  proved now. The unconditional part can be proved  similarly by  substituting (\ref{equ:16.3a}) in (\ref{equ:16.2}).
\end{proof}

\section{Proof of  main theorems}  \label{sec:proof-main}
In this section, we complete the proof of Theorem \ref{main-thm} and Theorem \ref{main-thm1}. The argument  is similar to \cite[Section 5]{Sound-Young}.
\subsection{Proof of Theorem \ref{main-thm}}
Recall the definition of $S(U_1,U_2)$ from (\ref{equ:13.1+}). 
Write $U=\frac{X}{(\log X)^{2^{50}}}$. Take $U_1=U_2= U$ and $Y=X^{\frac{1}{2}}U_1^{-\frac{1}{4}}U_2^{-\frac{1}{4}}$. 

Using these values, we can  simplify Lemmas \ref{lem:13.1}, \ref{lem:k=0}, \ref{lem:k=square} and \ref{lem:error}. In the following, we  give  the  detail of the simplification for Lemma \ref{lem:k=square}. The summation in Lemma \ref{lem:k=square} is 
\begin{align}
\sum_{j_1 + j_2 +j_3 +j_4 = 10} \frac{(-1)^{j_3} B(j_4)}{j_1 ! j_2 ! j_3 ! j_4 !} 
 (\log^{j_1} U)  (\log^{j_2} U) (\log^{j_3} X)  . 
 \label{equ:sum-B}
\end{align}
We consider the case $j_4 = 0$, and other cases can be done similarly. Assume $j_4 =0$ in (\ref{equ:sum-B}). Then by (\ref{equ:B(j)}), we have 
\begin{align*}
&\sum_{j_1 + j_2 +j_3  = 10} \frac{(-1)^{j_3} B(0)}{j_1 ! j_2 ! j_3 ! } 
 (\log^{j_1} U)  (\log^{j_2} U) (\log^{j_3} X) \\
 &= 
 26 \sum_{j_1 + j_2 +j_3  = 10} \frac{(-1)^{j_3} }{j_1 ! j_2 ! j_3 ! } 
 (\log^{j_1} U)  (\log^{j_2} U) (\log^{j_3} X) \\
 &= 26(\log^{10} X)   \sum_{j_1 + j_2 +j_3  = 10} \frac{(-1)^{j_3} }{j_1 ! j_2 ! j_3 ! } + O \left(\log^{9 + \varepsilon} X \right)\\
 &=\frac{26}{10!} \log^{10} X + O \left(\log^{9 + \varepsilon} X \right).
\end{align*}
The second last equality is due to $\log^j U = \log^{j} X + O(\log^{j-1+\varepsilon} X)$ for $j\geq 0$. The last equality is obtained by 
\[
\sum_{j_1 + j_2 +j_3  = 10} \frac{(-1)^{j_3} }{j_1 ! j_2 ! j_3 ! }  = \frac{1}{10!} \left. \frac{d^{10}}{dx^{10}} \right|_{x=0} (e^x e^x e^{-x} )= \frac{1}{10!}.
\]
Similarly, we can compute other cases in (\ref{equ:sum-B}). Combining all cases we can show (\ref{equ:sum-B}) is 
\begin{align*}
\left( \frac{26}{10!} - \frac{10}{9!} + \frac{1}{8!}\right)\log^{10} X + O \left(\log^{9 + \varepsilon} X \right) 
=
\frac{1}{2^4 \cdot 3^4 \cdot 5^2 \cdot 7}\log^{10} X + O \left(\log^{9 + \varepsilon} X \right).
\end{align*}
Using this fact,  Lemma \ref{lem:k=square}  can be simplified to 
\begin{align*}
S_1(k=\Box) 
  = - \frac{a_4 \tilde{\Phi}(1)}{2^5 \cdot 3^4 \cdot 5^2 \cdot 7 \cdot \pi^2}   X \log^{10} X
 +O \left(X \log^{9+\varepsilon} X  +  X (\log^{-20} X) \Phi_{(5)} \right).
\end{align*}

Now  by (\ref{equ:13.1+}), (\ref{equ:13.3}), combined with Lemmas \ref{lem:13.1}, \ref{lem:k=0}, \ref{lem:k=square} and \ref{lem:error}, we can  obtain that 
\begin{align}
S(U_1,U_2) &= \sideset{}{^*}\sum_{(d,2)=1} \left|A_{U}(\tfrac{1}{2};8d)\right|^2 \Phi\left(\tfrac{d}{X}\right) \nonumber\\
&=  \frac{a_{4} \tilde{\Phi}(1)}{2^6 \cdot 3^3 \cdot 5^2 \cdot 7 \cdot \pi^2} X \log^{10} X + O \left(X \log^{9+\varepsilon} X  +  X (\log^{-20} X) \Phi_{(5)} \right).
\label{equ:S(U,U)}
\end{align}

Define  $B_U(\frac{1}{2};8d) = L(\tfrac{1}{2},\chi_{8d})^2 - A_U(\frac{1}{2};8d)$. We claim that 
\begin{equation}
\sideset{}{^*}\sum_{(d,2)=1}|B_U(\tfrac{1}{2};8d)|^2 \Phi\left(\tfrac{d}{X}\right) \ll X \log^{9.5+\varepsilon} X .
\label{equ:B_U}
\end{equation}
In fact,  we  have
\begin{align*}
B_U(\tfrac{1}{2};8d)
=\frac{1}{\pi i} \int_{(c)} g(s) L(\tfrac{1}{2}+s,\chi_{8d})^2  \frac{(8d)^s-U^s}{s} ds.
\end{align*}
Since $\frac{(8d)^s-U^s}{s}$ is entire, we move the line of the integral to $\Re(s)=0$. By the bound $| \frac{(8d)^{it}-U^{it}}{it}| \ll  \log ( \frac{8d}{U} )$, $t\in \mathbb{R}$, we get  that
\[
B_U(\tfrac{1}{2};8d) \ll \log \left( \frac{8d}{U} \right) \int_{-\infty}^{\infty} |g(it)|  |L(\tfrac{1}{2}+it,\chi_{8d})|^2dt.
\]
This implies that the left-hand side of (\ref{equ:B_U}) is
\begin{align}
\ll  \left( \log \frac{X}{U} \right)^2 \int_{-\infty}^{\infty}\int_{-\infty}^{\infty}
 |g(it_1)||g(it_2)|\sideset{}{^*}\sum_{(d,2)=1}|L(\tfrac{1}{2}+it_1,\chi_{8d})|^2|L(\tfrac{1}{2}+it_2,\chi_{8d})|^2 \Phi(\tfrac{d}{X})dt_1dt_2.
 \label{equ:17.1}
\end{align}
Split the integral according to  whether $|t_1|,|t_2| \leq X$. If $|t_1|,|t_2| \leq X$, then use Theorem \ref{thm:3.1}. Otherwise, use Lemma \ref{lem:2.3}. This will establish (\ref{equ:B_U}).

Note that 
\begin{align*}
\sideset{}{^*}\sum_{(d,2)=1}L(\tfrac{1}{2},\chi_{8d})^4 \Phi\left(\tfrac{d}{X}\right)
&= \sideset{}{^*}\sum_{(d,2)=1}(A_U(\tfrac{1}{2};8d)+B_U(\tfrac{1}{2};8d))^2 \Phi\left(\tfrac{d}{X}\right)\\
&= \sideset{}{^*}\sum_{(d,2)=1}A_U(\tfrac{1}{2};8d)^2 \Phi\left(\tfrac{d}{X}\right) + \sideset{}{^*}\sum_{(d,2)=1}B_U(\tfrac{1}{2};8d)^2 \Phi\left(\tfrac{d}{X}\right)\\
&\quad + 2\sideset{}{^*}\sum_{(d,2)=1} A_U(\tfrac{1}{2};8d)
B_U(\tfrac{1}{2};8d)  \Phi\left(\tfrac{d}{X}\right).
\end{align*}
Using the Cauchy-Schwarz inequality on the third term, combined with (\ref{equ:S(U,U)}) and    (\ref{equ:B_U}), we obtain that
\begin{equation}
 \sideset{}{^*}\sum_{(d,2)=1}L(\tfrac{1}{2},\chi_{8d})^4\Phi(\tfrac{d}{X})  = 
  \frac{a_{4} \tilde{\Phi}(1)}{2^6 \cdot 3^3 \cdot 5^2 \cdot 7 \cdot \pi^2} X \log^{10} X 
  + O \left( X \log^{9.75+\varepsilon}X +  X (\log^{-5} X) \Phi_{(5)}\right).
  \label{equ:asy-smooth}
\end{equation}

In the following we remove the function $\Phi(\frac{d}{X})$ in the above summation.  Choose $\Phi$ such  that $\Phi(t) =1$ for all $t \in (1+Z^{-1}  ,  2-Z^{-1})$, $\Phi(t) =0$ for all $t \notin (1,2) $,  and  $\Phi^{(\nu)} (t)\ll_{\nu} Z^{\nu}$ for all $\nu \geq 0$. This implies that $\Phi_{(\nu)}  \ll_{\nu} Z^{\nu}$, and that $\tilde{\Phi}(1) = \check{\Phi} (0) = 1 + O(Z^{-1})$. Then by (\ref{equ:asy-smooth}), we get that 
\begin{align*}
 &\sideset{}{^*}\sum_{(d,2)=1}L(\tfrac{1}{2},\chi_{8d})^4\Phi(\tfrac{d}{X}) \\
 & = 
  \frac{a_{4}}{2^6 \cdot 3^3 \cdot 5^2 \cdot 7 \cdot \pi^2}X \log^{10} X  + O \left(  X (\log^{10} X) Z^{-1} +X \log^{9.75+\varepsilon}X +  X (\log^{-5} X) Z^5\right).
\end{align*}
Take $Z=\log X$. We have 
\begin{align}
 \sideset{}{^*}\sum_{\substack{X<d\leq 2X\\(d,2)=1}}L(\tfrac{1}{2},\chi_{8d})^4
 \geq \sideset{}{^*}\sum_{(d,2)=1}L(\tfrac{1}{2},\chi_{8d})^4\Phi(\tfrac{d}{X}) = 
  \frac{a_{4}}{2^6 \cdot 3^3 \cdot 5^2 \cdot 7 \cdot \pi^2}X \log^{10} X + O \left( X \log^{9.75+\varepsilon} X\right).
  \label{equ:lower}
\end{align}
Similarly,  we can choose  $\Phi(t)$ in (\ref{equ:asy-smooth})  such that $\Phi(t) = 1 $ for all $t \in [1 ,  2]$, $\Phi(t) =0$ for all $t \notin (1-Z^{-1},2+Z^{-1}) $,  and  $\Phi^{(\nu)}(t) \ll_{\nu} Z^{\nu}$ for all $\nu \geq 0$. Taking $Z=\log X$, we can deduce that 
\begin{align}
 \sideset{}{^*}\sum_{\substack{X<d\leq 2X\\(d,2)=1}}L(\tfrac{1}{2},\chi_{8d})^4
 \leq \sideset{}{^*}\sum_{(d,2)=1}L(\tfrac{1}{2},\chi_{8d})^4\Phi(\tfrac{d}{X}) = 
  \frac{a_{4}}{2^6 \cdot 3^3 \cdot 5^2 \cdot 7 \cdot \pi^2}X \log^{10} X + O \left( X \log^{9.75+\varepsilon} X\right).
  \label{equ:upper}
\end{align}
Combining (\ref{equ:lower}) and (\ref{equ:upper}), we obtain that
\begin{align*}
 \sideset{}{^*}\sum_{\substack{X<d\leq 2X\\(d,2)=1}}L(\tfrac{1}{2},\chi_{8d})^4
=  
  \frac{a_{4}}{2^6 \cdot 3^3 \cdot 5^2 \cdot 7 \cdot \pi^2}X \log^{10} X + O \left( X \log^{9.75+\varepsilon} X\right).
\end{align*}
Applying the above with $X= \frac{x}{2}$, $X= \frac{x}{4}$, $\dots$, we have proved Theorem \ref{main-thm}.
\subsection{Proof of Theorem \ref{main-thm1}} Write  $U = X^{1-4\varepsilon}$. By the Cauchy-Schwarz inequality, we obtain that 
\begin{align}
  \sideset{}{^*}\sum_{(d,2)=1}L (\tfrac{1}{2},\chi_{8d})^4\Phi \left(\tfrac{d}{X} \right) 
 \geq \frac{\left(\sideset{}{^*}\sum_{(d,2)=1} A_U(\frac{1}{2},8d) L (\tfrac{1}{2},\chi_{8d})^2  \Phi\left(\frac{d}{X} \right)\right)^2}
 {\sideset{}{^*}\sum_{(d,2)=1}\left(A_U(\tfrac{1}{2},8d)\right)^2 \Phi\left(\frac{d}{X}\right)}.
 \label{cauchy-lower}
\end{align}
Let $A^2$ and $B$ denote the numerator and denominator of the right-hand side in (\ref{cauchy-lower}), respectively. 

We first handle $B$. By (\ref{equ:13.1+})  and   (\ref{equ:13.3}), combined with Lemmas \ref{lem:13.1}, \ref{lem:k=0}, \ref{lem:k=square} and \ref{lem:error}, taking $Y=X^{\frac{1}{2}}U_1^{-\frac{1}{4}}U_2^{-\frac{1}{4}}$ and $ U_1=U_2= U$, we get that
\begin{align*}
B = S(U_1,U_2) = \frac{a_{4} \left(1-\frac{80}{3}\varepsilon + O(\varepsilon^2) \right)}{2^6 \cdot 3^3 \cdot 5^2 \cdot 7 \cdot \pi^2} \tilde{\Phi}(1) X \log^{10} X + O \left(X \log^{9} X + X \Phi_{(5)}\right),
\end{align*}
where the implied constant in $O(\varepsilon^2)$ is absolute.

For $A$, we have
\[
A = 4\sideset{}{^*}\sum_{(d,2)=1}\sum_{n_1=1}^{\infty}\sum_{n_2=1}^{\infty}\frac{\tau(n_1)\tau(n_2)\chi_{8d}(n_1n_2)}{\sqrt{n_1n_2}}h_1(d,n_1,n_2),
\]
where 
\[
h_1(x,y,z) := \Phi \left(\frac{x}{X}\right)\omega\left(\frac{y\pi}{U}\right)\omega\left(\frac{z\pi}{8x}\right).
\]
Note that the  difference between $A$ and $B$ lies in the difference between $h(x,y,z)$ and $h_1(x,y,z)$. By slightly modifying  the argument  for computing $B$, taking  $Y=X^{\frac{1}{2}}U^{-\frac{1}{4}} X^{-\frac{1}{4}}$, we can deduce that
\begin{align*}
A =  \frac{a_{4} \left(1-\frac{40}{3}\varepsilon + O(\varepsilon^2) \right)}{2^6 \cdot 3^3 \cdot 5^2 \cdot 7 \cdot \pi^2} \tilde{\Phi}(1) X \log^{10} X + O \left(X \log^{9} X+X \Phi_{(5)}\right),
\end{align*}
where the implied constant in $O(\varepsilon^2)$ is absolute.

Choose $\Phi$ such  that $\Phi(t) =1$ for all  $t \in (1+Z^{-1}  ,  2-Z^{-1})$, $\Phi(t) =0$ for all $t \notin (1,2) $,  and  $\Phi^{(\nu)} (t)\ll_{\nu} Z^{\nu}$ for all $\nu \geq 0$. Take $Z=\log X$.  Combining (\ref{cauchy-lower}) with the estimates for $A$ and $B$, we have 
\begin{align*}
  \sideset{}{^*}\sum_{\substack{(d,2)=1 \\ X <d \leq 2X}}L (\tfrac{1}{2},\chi_{8d})^4
 \geq  \left(1+ O(\varepsilon^2) \right)\frac{a_{4}}{2^6 \cdot 3^3 \cdot 5^2 \cdot 7 \cdot \pi^2}  X \log^{10} X .
\end{align*}
Having summed this with $X= \frac{x}{2}$, $X= \frac{x}{4}$, $\dots$, we  obtain  Theorem \ref{main-thm1}.

\section{Proof of Theorem \ref{thm:3.1}}\label{sec:upper bd}
In this section, we shall prove Theorem \ref{thm:3.1}. The proof here closely follows \cite[Section 6]{Sound-Young}.

Let $x\in \mathbb{R}$ with $x \geq10$,  and  $z \in \mathbb{C}$. Define
\begin{align*}
\mathcal{L}(z,x):=\left\{
 \begin{array}
  [c]{ll}
  \operatorname{log}\operatorname{log}x & |z|\leq(\operatorname{log}x)^{-1},\\
  -\operatorname{log}|z|& (\operatorname{log}x)^{-1}< |z|\leq 1,\\
  0&|z| >1.
 \end{array}
 \right.
\end{align*}
Let $z_1, z_2 \in \mathbb{C}$. We define
\begin{align*}
\mathcal{M}(z_1,z_2,x) :=\frac{1}{2} \left(\mathcal{L}(z_1,x)+\mathcal{L}(z_2,x) \right),
\end{align*}
and 
\begin{multline*}
\mathcal{V}  (z_1,z_2,x)\\
:=\frac{1}{2}\left(\mathcal{L}(2z_1,x)+\mathcal{L}(2z_2,x)+\mathcal{L}(2 \Re (z_1),x)+\mathcal{L}(2\Re (z_2),x)+2\mathcal{L}(z_1+z_2,x)+2\mathcal{L}(z_1+\overline{z_2},x)\right).
\end{multline*}
\begin{rem}
We see that the definition of $\mathcal{M}(z_1,z_2,x)$  is different from that in \cite[Section 6]{Sound-Young} by a factor $-1$, while $\mathcal{V}  (z_1,z_2,x)$ is the same.   The difference is due to the  different symmetry types of  families of $L$-functions
(see Katz-Sarnak \cite{Katz-Sarnak}). 
The family of quadratic Dirichlet $L$-functions is symplectic, whereas the  family of quadratic twists of a modular $L$-function  in  \cite{Sound-Young} is orthogonal.  For further explanation, we refer readers to \cite[p. 1111]{Sound-Young} and  \cite[p. 991]{Sound01}.
\end{rem}
\begin{prop}
Assume GRH for $L(s,\chi_d)$ for all fundamental discriminants $d$. Let $X$ be large. Let $z_1,z_2 \in \mathbb{C}$ with $0 \leq \Re (z_1), \Re (z_2) \leq \frac{1}{\log X}$, and $|\Im (z_1)|, |\Im (z_2)| \leq X$. Let $\mathcal{N}(V;z_1,z_2,X)$ denote the number of 
 fundamental discriminants $|d|\leq X$ such that
 \begin{align*}
  \operatorname{log}|L(\tfrac{1}{2}+z_1,\chi_{d})L(\tfrac{1}{2}+z_2,\chi_{d})|\geq 
  V +\mathcal{M}(z_1,z_2,X).
 \end{align*}
 Then for $10\sqrt{\operatorname{log}\operatorname{log} X}\leq V \leq \mathcal{V}(z_1,z_2,X)$,  we have
 \begin{align*}
  \mathcal{N}(V;z_1,z_2,X)\ll X\operatorname{exp}\left(-\frac{V^2}{2\mathcal{V}(z_1,z_2,X)}
  \left(1-\frac{25}{\operatorname{log}\operatorname{log}\operatorname{log}X}\right)\right);
 \end{align*}
 for $\mathcal{V}(z_1,z_2,X)< V \leq \frac{1}{16}\mathcal{V}(z_1,z_2,X)\operatorname{log}\operatorname{log}\operatorname{log}X$, 
 we have
 \begin{align*}
  \mathcal{N}(V;z_1,z_2,X)\ll X\operatorname{exp}\left(-\frac{V^2}{2\mathcal{V}(z_1,z_2,X)}
  \left(1-\frac{15V}{\mathcal{V}(z_1,z_2,X)\operatorname{log}\operatorname{log}\operatorname{log}X}\right)^2\right);
 \end{align*}
 finally, for $\frac{1}{16}\mathcal{V}(z_1,z_2,X)\operatorname{log}\operatorname{log}
 \operatorname{log}X<V$, we have
 \begin{align*}
  \mathcal{N}(V;z_1,z_2,X)\ll X\operatorname{exp}\left(-\frac{1}{1025}V\operatorname{log}V\right).
 \end{align*}
 \label{pro:3.2}
\end{prop}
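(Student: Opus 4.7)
The plan is to follow closely the argument of Soundararajan--Young \cite[Section 6]{Sound-Young} (itself an elaboration of Soundararajan \cite[Section 4]{Sound01}), adapted to the symplectic symmetry of the family $\{\chi_d\}$. The starting point is Soundararajan's main GRH inequality: for any $x \geq 2$, an appropriate parameter $\lambda_0>0$, and any $z$ with small real part,
\begin{equation*}
\log|L(\tfrac12+z,\chi_d)| \;\leq\; \Re \sum_{n\leq x}\frac{\Lambda(n)\chi_d(n)}{n^{\frac12+z+\lambda_0/\log x}\log n}\,\frac{\log(x/n)}{\log x}\;+\;\frac{(1+\lambda_0)\log|d|}{\log x}\;+\;O(1).
\end{equation*}
Applying this at $z_1$ and $z_2$ and summing, the contribution of prime squares is essentially deterministic: for $p^2\leq x$ one has $\chi_d(p^2)=1$ for $(d,p)=1$, and the corresponding sum yields exactly the shift $\mathcal{M}(z_1,z_2,X)$. (This is precisely where the sign difference from \cite[Section 6]{Sound-Young} enters; in the orthogonal family there the symmetric-square $L$-function mean of $\chi_d(p)^2$ produces the opposite sign, whereas here the quadratic character is \emph{real} and $\chi_d(p^2)=1$ contributes positively to the upper bound.)

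With $\mathcal{M}$ absorbed, the task reduces to bounding the measure of $d$'s for which the linear prime sum
\begin{equation*}
S(d)\;=\;\Re\sum_{p\leq x}\frac{\chi_d(p)}{p^{\frac12+z_1+\lambda_0/\log x}}\frac{\log(x/p)}{\log x}\;+\;\Re\sum_{p\leq x}\frac{\chi_d(p)}{p^{\frac12+z_2+\lambda_0/\log x}}\frac{\log(x/p)}{\log x}
\end{equation*}
exceeds $V$. Split this as $S(d)=S_1(d)+S_2(d)$ at a cutoff $z=x^{1/\log\log X}$ (or similar), so that $\{S\geq V\}\subseteq\{S_1\geq V_1\}\cup\{S_2\geq V-V_1\}$ for a well-chosen $V_1$, and estimate each piece by the $2k$-th power Chebyshev method. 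For any $k$ with $z^{2k}\leq X^{1-\varepsilon}$, the orthogonality of quadratic characters ($\sideset{}{^\flat}\sum_{|d|\leq X}\chi_d(n)\ll X$ for $n=\square$, and negligible otherwise) forces all primes in the expansion of $S_j(d)^{2k}$ to pair, yielding
\begin{equation*}
\sideset{}{^\flat}\sum_{|d|\leq X}S_j(d)^{2k}\;\ll\; X\,\frac{(2k)!}{2^k\,k!}\,\bigl(\mathcal{V}_j(z_1,z_2,X)\bigr)^{k},
\end{equation*}
where expanding the squared sums and tracking the four kinds of pairings $p^{-(1+2z_j)}$, $p^{-(1+2\Re z_j)}$, $p^{-(1+z_1+z_2)}$, $p^{-(1+z_1+\overline{z_2})}$ produces exactly the six $\mathcal{L}$-terms defining $\mathcal{V}(z_1,z_2,X)$; the cross contributions $\mathcal{L}(z_1+\overline{z_2})$ arise from the pairing of one unconjugated copy with the $\Re$ of another.

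Chebyshev with the optimal choice $k\approx V^2/(2\mathcal{V})$ then yields the first bound $\exp(-V^2/(2\mathcal{V})(1-25/\log\log\log X))$ in the moderate range $10\sqrt{\log\log X}\leq V\leq \mathcal{V}$. In the intermediate range $\mathcal{V}<V\leq \tfrac{1}{16}\mathcal{V}\log\log\log X$, the admissibility constraint $z^{2k}\leq X^{1-\varepsilon}$ caps $k$ below its optimum, producing the weakened Gaussian bound with factor $(1-15V/(\mathcal{V}\log\log\log X))^2$. For $V$ beyond this window the moment method collapses and one applies the Soundararajan inequality with a different choice of $x$ (roughly $x=\exp(A\log V)$ for a small constant $A$) together with a trivial bound on the short prime sum, giving the $\exp(-V\log V/1025)$ estimate. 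The main obstacle throughout is the bookkeeping in step two: producing exactly the expression $\mathcal{V}(z_1,z_2,X)$, uniformly as $z_1,z_2$ range over the allowed box, requires careful estimation of prime sums of the form $\sum_{p\leq z}p^{-(1+w)}$ in each of the ranges $|w|\leq 1/\log z$, $1/\log z<|w|\leq 1$, $|w|>1$, so that they match the piecewise definition of $\mathcal{L}$; once this is in place the remainder is a direct transcription of \cite[Section 6]{Sound-Young} with the modified $\mathcal{M}$.
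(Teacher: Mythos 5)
Your proposal follows the paper's route essentially exactly: Soundararajan's GRH inequality for $\log|L|$, the prime-square terms producing $\mathcal{M}(z_1,z_2,X)$ with the symplectic sign, the split at $z=x^{1/\log\log X}$ into $S_1$ and $S_2$, the even moment bound via orthogonality of $\chi_d$ (this is \cite[Lemma~6.3]{Sound-Young}), the identification of $\mathcal{V}(z_1,z_2,X)$ from the cross-pairings of $p^{-z_1},p^{-\overline{z_1}},p^{-z_2},p^{-\overline{z_2}}$, and Chebyshev with moment order tuned to the size of $V$.

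One part of your sketch is off, though: your description of the large-$V$ regime. You say the moment method ``collapses'' and one switches to $x=\exp(A\log V)\approx V^A$ with a trivial bound on the short prime sum. In fact the paper keeps $x=X^{A/V}$ throughout, simply sets $A=8$ in the top range, and still applies the even-moment bound to $S_1$, taking $m=\lfloor 10V\rfloor$ when $V>(\log\log X)^2/\log\log\log X$; the $S_2$ contribution continues to be estimated by moments with $m=\lfloor V/(2A)\rfloor-1$. Your choice $x\approx V^A$ is actually too small: the error term $(1+\lambda_0)\log X/\log x$ in Soundararajan's inequality would then be of size $\log X/(A\log V)$, which dwarfs $V$ in the relevant range, so the inequality carries no information. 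You should also record the preliminary reduction that $\mathcal{N}(V;z_1,z_2,X)=0$ once $V>5\log X/\log\log X$ (take $x=\log X$ and bound the prime sum trivially), which is what makes the choice $x=X^{8/V}$ admissible in the top range.
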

\begin{proof}
It is helpful to keep in mind that $\log\log X + O(1) \leq \mathcal{V}(z_1,z_2,x) \leq 4\log\log X $. By slightly modifying the proof of the main  proposition in \cite{Sound01}, we obtain  that for any  $2 \leq x \leq X$,
\begin{align*}
\log |L(\tfrac{1}{2}+z_i,\chi_d)| & \leq \Re \left( \sum_{2 \leq n \leq x} \frac{ \Lambda (n) \chi_d (n)}{n^{\frac{1}{2}+\frac{\lambda_0}{\log x}+z_i} \log n} \frac{\log (\frac{x}{n})}{\log x} \right)+ (1+\lambda_0)\frac{\log X}{\log x} +O \left (\frac{1}{\log x} \right ), \  i=1,2,\\
\end{align*}
where $\lambda_0 = 0.56 \dots$ is the unique real number  satisfying $e^{-\lambda_0} = \lambda_0$.
It follows that 
\begin{align}
&\log  |L(\tfrac{1}{2}+z_1 , \chi_d)||L(\tfrac{1}{2}+z_2,\chi_d)|  \nonumber \\
 &\leq \Re \left( \sum_{\substack{p^l \leq x \\ l \geq 1}} \frac{\chi_d (p^l)}{l p^{l(\tfrac{1}{2}+\frac{\lambda_0}{\log x})}}  (p^{-lz_1} + p^{-lz_2}) \frac{\log (\frac{x}{p^l})}{\log x} \right)+ 2 (1+\lambda_0)\frac{\log X}{\log x} +O \left (\frac{1}{\log x} \right ).
\label{equ:3.1}
\end{align}

The terms with $l \geq 3$ in the the above sum contribute $O(1)$. Using the fact $\sum_{p|d} \frac{1}{p} \ll \log\log\log d$,  we get that
\begin{align}
&\Re  \left( \sum_{p^2 \leq x }  \frac{\chi_d (p^2)}{2 p^{1+\frac{2\lambda_0}{\log x}}}  (p^{-2z_1} + p^{-2z_2}) \frac{\log (\frac{x}{p^2})}{\log x} \right)\nonumber \\
&= \Re \left( \sum_{p \leq \sqrt{x} } \frac{1}{2 p^{1+\frac{2\lambda_0}{\log x}}}  (p^{-2z_1} + p^{-2z_2}) \frac{\log (\frac{x}{p^2})}{\log x} \right)  + O(\log\log\log X). 
\label{equ:3.2}
\end{align}
By RH, we can deduce  that 
\begin{align}
 \sum_{p\leq y}(p^{-2z_1}+p^{-2z_2})\log p = \frac{y^{1-2z_1}}{1-2z_1}
 +\frac{y^{1-2z_2}}{1-2z_2}+O \left(\sqrt{y} (\log Xy)^2 \right).
 \label{equ:3.2+}
\end{align}
The above sum also has a trivial bound $\ll y$. Combining (\ref{equ:3.2}) with these two bounds, by partial summation, we have
 \begin{align*}
  \sum_{p\leq \sqrt{x}}
    \frac{1}{2 p^{1+\frac{2\lambda_0}{\log x}}}(p^{-2z_1}+p^{-2z_2})
    \frac{\log (\frac{x}{p^2})}{\log x}=\mathcal{M}(z_1,z_2,x)+O (\log 
    \log \log X).
 \end{align*}
 Inserting above estimates into (\ref{equ:3.1}), by $\mathcal{M}(z_1,z_2,x) \leq \mathcal{M}(z_1,z_2,X)$, we obtain  that 
 \begin{multline}
 \log |L  (\tfrac{1}{2}+z_1 , \chi_d)||L(\tfrac{1}{2}+z_2,\chi_d)|  \\
    \leq \Re \left( \sum_{2<p \leq x} \frac{\chi_d (p)}{p^{\frac{1}{2}+\frac{\lambda_0}{\log x}}}  (p^{-z_1} +   p^{-z_2}) \frac{\log (\frac{x}{p})}{\log x} \right)
    +\mathcal{M}(z_1,z_2,X)+
    \frac{4 \log X}{\log x} + O(\log \log \log X).
 \label{equ:3.3}
 \end{multline}
 
 For brevity, put $\mathcal{V} := \mathcal{V} (z_1,z_2,X)$. Set
 \begin{align*}
 A:=\left\{
 \begin{array}
  [c]{ll}
  \frac{1}{2}\log\log\log X & 10 \sqrt{\log \log X}\leq V\leq \mathcal{V},\\
  \frac{\mathcal{V}}{2V}\log\log\log X & \mathcal{V}<V\leq \frac{1}{16}\mathcal{V}\log\log\log X,\\
  8 & V > \frac{1}{16}\mathcal{V}\log\log\log X.
 \end{array}
 \right.
\end{align*}

By taking $x = \log X$ in (\ref{equ:3.3}) and bounding the sum  over $p$ in (\ref{equ:3.3}) trivially, we know that $\mathcal{N} (V; z_1,z_2,X) =0$  for $V > \frac{5\log X}{\log \log X}  $. Thus, we can assume $V \leq   \frac{5\log X}{\log \log X}  $.

 From now on, we set $x=X^{A/V}$ and $z=x^{1/\log \log X}$. Let $S_1$ be the sum in (\ref{equ:3.3}) truncated to $p \leq z$, and $S_2$ be the sum over $z < p \leq x$. It follows from (\ref{equ:3.3}) that
\[
 \log |L (\tfrac{1}{2}+z_1 , \chi_d)||L(\tfrac{1}{2}+z_2,\chi_d)|
    \leq S_1 + S_2 + \mathcal{M}(z_1,z_2,X) + \frac{5V}{A}.
\]
Note that if $d$ satisfies  $\log |L (\frac{1}{2}+z_1 , \chi_d)||L(\tfrac{1}{2}+z_2,\chi_d)| \geq V +  \mathcal{M} (z_1,z_2,X)$, then either
\[
S_2 \geq \tfrac{V}{A}, \text{ or } S_1 \geq V_1 := V(1-\tfrac{6}{A}).
\]

Write 
\begin{align*}
\operatorname{meas}(X;S_1) &:= \# \{|d| \leq X \ : \ d \text{ is a fundamental discriminant, } S_1 \geq V_1 \},\\
\operatorname{meas}(X;S_2) &:= \# \{|d| \leq X \ : \ d \text{ is a fundamental discriminant, } S_2 \geq \tfrac{V}{A}  \}.
\end{align*}

For any $m \leq \frac{V}{2A} -1$, by \cite[Lemma 6.3]{Sound-Young}, we have
\begin{align*}
\sideset{}{^\flat}\sum_{|d|\leq X}|S_2|^{2m} \ll X\frac{(2m)!}{m!2^m}\left(\sum_{z<p\leq x}
\frac{4}{p}\right)^m \ll X (3m \log \log \log X)^m.
\end{align*}
By choosing $m = \lfloor \frac{V}{2A}\rfloor -1$, we get that
\begin{equation}
\operatorname{meas}(X;S_2) \ll X\operatorname{exp}\left(-\frac{V}{4A}\log V\right).
\label{equ:bd-S-2}
\end{equation}

We next estimate $\operatorname{meas}(X;S_1)$. For any $m \leq \frac{\frac{1}{2}\log X - \log \log X}{ \log z}$, by \cite[Lemma 6.3]{Sound-Young}, we obtain that
\begin{align}
\sideset{}{^\flat}\sum_{|d|\leq X}|S_1|^{2m} \ll X\frac{(2m)!}{m!2^m}\left(\sum_{p\leq z}\frac{|a(p)|^2}{p}\right)^m,
\label{equ:3.4}
\end{align}
where
\begin{align*}
 a(p)=\frac{\Re(p^{-z_1}+p^{-z_2})\log (\frac{x}{p})}{p^{\frac{\lambda_0}{\log x}}\log x}.
\end{align*}
By using (\ref{equ:3.2+}) and the partial summation, we can show that
\begin{align*}
\sum_{p\leq z}\frac{|a(p)|^2}{p}
\leq \frac{1}{4}\sum_{p\leq \sqrt{X}} \frac{1}{p}(p^{-z_1}+p^{-\overline{z_1}}+p^{-z_2}+p^{-\overline{z_2}})^2
=\mathcal{V}(z_1,z_2,X)+O(\log\log\log X).
\end{align*}
Together with (\ref{equ:3.4}), this yields
\begin{align*}
\operatorname{meas}(X ; S_1)
\ll X V_1^{-2m} \frac{(2m)!}{m!2^m} (\mathcal{V} + O(\log\log\log X))^m
\ll X\left(\frac{2m}{e} \cdot \frac{\mathcal{V}+O(\log\log\log X)}{V_1^2}\right)^m.
\end{align*}
Taking $m = \lfloor \frac{V_1^2} { 2\mathcal{V}} \rfloor$ when $V \leq \frac{(\log \log X)^2}{\log \log \log X}$, and taking $ m = \lfloor 10V \rfloor$ otherwise, we obtain that
\begin{align}
\operatorname{meas}(X ; S_1) \ll X\operatorname{exp}\left(-\frac{V_1^2}{2\mathcal{V}}\left(1+O\left(\frac{\log\log\log X}{\log\log X}\right)\right)\right)
+X\operatorname{exp}\left(-V\log V\right).
\label{equ:bd-S-1}
\end{align}

Using  the estimates (\ref{equ:bd-S-2}) and  (\ref{equ:bd-S-1}),  we can establish Proposition \ref{pro:3.2}. This completes the proof.
\end{proof}
For convenience,  in the following we show  a rough form of Proposition \ref{pro:3.2}. Let $k \in  \mathbb{R}_{>0}$ be fixed.
For $10\sqrt{\operatorname{log}\operatorname{log} X} \leq V \leq 4k \mathcal{V}(z_1,z_2,X)$, we have
\begin{align}
  \mathcal{N}(V;z_1,z_2,X)\ll X(\operatorname{log}X)^{o(1)}\operatorname{exp}\left(-\frac{V^2}{2\mathcal{V}(z_1,z_2,X)}\right),
  \label{equ:rough-01}
 \end{align}
and for $V > 4k \mathcal{V}(z_1,z_2,X)$, we have
 \begin{align}
  \mathcal{N}(V;z_1,z_2,X)\ll X(\operatorname{log}X)^{o(1)}\operatorname{exp}(-4kV).
  \label{equ:rough-02}
 \end{align}

Observe that
\begin{align*}
 \sideset{}{^\flat}\sum_{|d|\leq X}   |L (\tfrac{1}{2}+z_1 , \chi_d)L(\tfrac{1}{2}+z_2,\chi_d)|^k
  &=-\int_{-\infty}^{\infty}\operatorname{exp}(kV+k\mathcal{M}(z_1,z_2,X))d \mathcal{N}(V;z_1,z_2,X) \nonumber\\
  &=k\int_{-\infty}^{\infty}\operatorname{exp}(kV+k\mathcal{M}(z_1,z_2,X))\mathcal{N}(V;z_1,z_2,X)dV.
\end{align*}
Inserting the rough bounds (\ref{equ:rough-01}) and (\ref{equ:rough-02}) into the integral above, we can deduce that
\begin{thm}
Assume GRH for $L(s,\chi_d)$ for all fundamental discriminants $d$. Let $X$ be large. Let $z_1,z_2 \in \mathbb{C}$ with $0 \leq \Re (z_1), \Re (z_2) \leq \frac{1}{\log X}$, and $|\Im (z_1)|, |\Im (z_2)| \leq X$. Then for any positive real number $k$ and any $\varepsilon>0$, we have
 \begin{align*}
  \sideset{}{^\flat}\sum_{|d|\leq X} |L(\tfrac{1}{2}+z_1,\chi_{d})
 L(\tfrac{1}{2}+z_2,\chi_{d})|^{k}
  \ll_{k,\varepsilon}X(\operatorname{log}X)^\varepsilon
  \operatorname{exp}\left(k \mathcal{M}(z_1,z_2,X)+\frac{k^2}{2}\mathcal{V}(z_1,z_2,X)\right).
 \end{align*}
\label{thm:3.3}
\end{thm}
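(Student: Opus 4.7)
My plan is to convert the tail bounds supplied by Proposition~\ref{pro:3.2} (used in the convenient rough form (\ref{equ:rough-01})--(\ref{equ:rough-02})) into a moment bound via a standard Laplace-type integration against the distribution function $\mathcal{N}(V;z_1,z_2,X)$. The starting point is the identity
\[
\sideset{}{^\flat}\sum_{|d|\leq X}\!|L(\tfrac{1}{2}+z_1,\chi_d)L(\tfrac{1}{2}+z_2,\chi_d)|^k = k e^{k\mathcal{M}(z_1,z_2,X)} \int_{-\infty}^{\infty} e^{kV}\mathcal{N}(V;z_1,z_2,X)\,dV,
\]
obtained by integration by parts (the boundary term at $-\infty$ vanishes because $e^{kV}\to 0$ while $\mathcal{N}$ is bounded by the total count, and the one at $+\infty$ vanishes because $\mathcal{N}(V)=0$ once $V$ exceeds the trivial polynomial-in-$\log X$ bound on $\log|L|$ that follows from (\ref{equ:3.3}) with $x=\log X$).

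Next I would split the $V$-integral into three pieces matching the ranges in which the bound on $\mathcal{N}$ changes shape. For $V\leq 10\sqrt{\log\log X}$ the trivial bound $\mathcal{N}\ll X$ contributes $\ll X\exp(O(\sqrt{\log\log X}))$, which is absorbed by the factor $(\log X)^{\varepsilon}$. For $10\sqrt{\log\log X}\leq V\leq 4k\,\mathcal{V}(z_1,z_2,X)$, inserting (\ref{equ:rough-01}) bounds the integrand by
\[
X(\log X)^{o(1)}\exp\!\Bigl(kV-\tfrac{V^{2}}{2\mathcal{V}(z_1,z_2,X)}\Bigr),
\]
whose exponent is maximized at the saddle $V=k\,\mathcal{V}(z_1,z_2,X)$ with maximum value $k^{2}\mathcal{V}(z_1,z_2,X)/2$; a Laplace estimate (or simply bounding the integrand by its maximum times a factor $\ll\mathcal{V}\ll\log\log X$) contributes $\ll X(\log X)^{o(1)}\exp(k^{2}\mathcal{V}(z_1,z_2,X)/2)$. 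For $V>4k\,\mathcal{V}(z_1,z_2,X)$, the bound (\ref{equ:rough-02}) makes the integrand at most $X(\log X)^{o(1)}e^{-3kV}$, and the integral from $4k\mathcal{V}$ onward is $\ll X(\log X)^{o(1)}e^{-12k^{2}\mathcal{V}}$, which is absorbed into the main term.

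Summing the three contributions, multiplying by $ke^{k\mathcal{M}(z_1,z_2,X)}$, and absorbing the $(\log X)^{o(1)}$ into $(\log X)^{\varepsilon}$ yields the stated inequality. Because $k$ is fixed and $\log\log X+O(1)\leq\mathcal{V}(z_1,z_2,X)\leq 4\log\log X$, the saddle $V=k\mathcal{V}$ lies comfortably inside the middle regime for $X$ large, so no additional case analysis is required. The genuine technical work has already been performed upstream, in proving Proposition~\ref{pro:3.2}; once that proposition is in hand the present theorem is routine, and the only thing to watch is the clean combination of the three $(\log X)^{o(1)}$ contributions into a single $(\log X)^{\varepsilon}$, which follows because each arises from a subpolynomial factor tracked in the proof of Proposition~\ref{pro:3.2}.
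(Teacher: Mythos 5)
Your proposal is correct and follows essentially the same route as the paper: the paper converts the moment to the Laplace integral $k e^{k\mathcal{M}}\int e^{kV}\mathcal{N}(V)\,dV$ by integration by parts and then states that inserting the rough tail bounds (\ref{equ:rough-01}) and (\ref{equ:rough-02}) yields the result, leaving implicit the three-range splitting and saddle-point estimate you carry out. Your write-up supplies exactly the standard details the paper omits, including the correct justification that the boundary term vanishes and that the saddle $V=k\mathcal{V}$ lies in the middle range.
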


In the rest  of this section,  we complete the proof of Theorem \ref{thm:3.1}.
\begin{proof}[Proof of Theorem \ref{thm:3.1}]
By Theorem \ref{thm:3.3} and the fact that $\mathcal{L}(z,x) \leq \log \log x$ for $z \in \mathbb{C}, x \geq  10$,  we can trivially  get that
\begin{equation}
\sideset{}{^\flat}\sum_{|d|\leq X}  |L(\tfrac{1}{2} + z_1,\chi_d)|^k|L(\tfrac{1}{2} + z_2,\chi_d)|^k 
 \ll_{k,\varepsilon} X(\operatorname{log}X)^{2k^2 +k+\varepsilon}.
 \label{equ:trivial-upper-L}
 \end{equation}

Now we assume $|\Im (z_1) - \Im (z_2)| \geq \frac{1}{\log X}$. Write $t_1 = \Im (z_1)$ and $t_2 = \Im (z_2)$. 

If $t_1t_2 \geq 0$, then $|t_1-t_2| \leq |t_1+t_2|  \leq \max (2|t_1|, 2|t_2|)$,  say $|t_1+t_2| \leq 2|t_1|$. Note that $\mathcal{L}(y,X$) is a decreasing function  for $y \geq 0 $. Thus, we have 
\begin{align*}
\mathcal{L}(z_1,X),\ \mathcal{L}(2z_1,X),\ \mathcal{L}(z_1+z_2,X), \ \mathcal{L}(z_1+\overline{z_2},X)& \leq \mathcal{L}(|t_1-t_2|,X) + O(1) \\ &\leq \max (0, -\log |t_1-t_2|) +O(1).
\end{align*}
This together with 
\[
\mathcal{L}(z_2,X),\   \mathcal{L}(2z_2,X),\  \mathcal{L}(2 \Re(z_1),X),\  \mathcal{L}(2 \Re(z_2),X) \leq \log\log X 
\]
implies
\begin{align}
2\mathcal{M}(z_1,z_2,X) + 2\mathcal{V}(z_1,z_2,X) \leq 4\log\log X + \max \{0,- 6\log |t_1-t_2|\}+O(1).
\label{equ:3.5}
\end{align}

On the other hand, if $t_1t_2 < 0$, then $|t_1-t_2| = |t_1|+|t_2|\leq \max \{|2t_1|,|2t_2|\}$, say $|t_1-t_2| \leq |2t_2|$. It implies that $|t_1| \leq |t_2|$ and that $\mathcal{L}(2t_2,X)\leq \mathcal{L}(|t_1-t_2|,X)$. Note $|t_1-t_2|=2|t_1|+|t_1+t_2|$, so $|t_1-t_2| \leq \max \{4|t_1|, 2|t_1+t_2|\}$. In fact, if $|t_1-t_2| > 4 |t_1|$, then $2|t_1|+|t_1+t_2| > 4|t_1|$, which implies $|t_1| \leq \frac{1}{2}  |t_1+t_2| $. It means  $|t_1-t_2|=2|t_1|+|t_1+t_2| \leq 2 |t_1 +t_2|$. Without loss of generality, we can say $|t_1-t_2| \leq 4|t_1|$. It follows that $\mathcal{L}(z_1,X), \mathcal{L}(2z_1,X)\leq \mathcal{L}(|t_1-t_2|,X)+O(1)$. Now we have 
\begin{align*}
\mathcal{L}(z_1,X),\ \mathcal{L}(2z_1,X),\ \mathcal{L}(z_2 ,X), \ \mathcal{L}(2z_2 ,X), \ \mathcal{L}(z_1+\overline{z_2},X)& \leq \mathcal{L}(|t_1-t_2|,X) + O(1) \\ &\leq \max (0, -\log |t_1-t_2|) +O(1).
\end{align*}
This combined with 
\[
\mathcal{L}(2 \Re(z_1),X),\  \mathcal{L}(2 \Re(z_2),X),\  \mathcal{L}(z_1+z_2,X) \leq \log\log X 
\]
also implies (\ref{equ:3.5}). 

By inserting  (\ref{equ:3.5}) into Theorem \ref{thm:3.3}, we can show for $|\Im (z_1) - \Im (z_2)| \geq \frac{1}{\log X}$,
\begin{equation}
\sideset{}{^\flat}\sum_{|d|\leq X}  |L(\tfrac{1}{2} + z_1,\chi_d)|^2|L(\tfrac{1}{2} + z_2,\chi_d)|^2 
 \ll X(\operatorname{log}X)^{4+\varepsilon} \left(1 + \frac{1}{|t_1 - t_2|^6} \right).
 \label{equ:nontrivial-upper-L}
\end{equation}

By combining (\ref{equ:nontrivial-upper-L}) and (\ref{equ:trivial-upper-L}) with $k=2$, we have proved Theorem \ref{thm:3.1}. 
\end{proof}

\noindent \textbf{Acknowledgements.} I would like to thank my supervisors  Habiba Kadiri and  Nathan Ng, for suggesting this problem to me, and for having numerous helpful discussions. I  would also like to thank Matilde Lal{\'\i}n,  Keiju Sono and  Peng-jie Wong for  their valuable comments. Lastly, I would like to thank the referee for their extensive feedback and constructive comments.

%%%
 \bibliographystyle{plain}
 \bibliography{Revised-the-fourth-moments-of-Dirichlet-L-functions-brief-version}
% %%%%%
\end{document}